\title{Bi-Lagrangian structures and the space of rays}
\author[$\dagger$]{Wojciech Domitrz}
\author[$\star$]{Marcin Zubilewicz}
\affil[$\dagger$]{\footnotesize
Faculty of Mathematics and Information Science,
Warsaw University of Technology,

\textit{e-mail:} wojciech.domitrz@pw.edu.pl
}
\affil[$\star$]{\footnotesize
Faculty of Mathematics and Information Science,
Warsaw University of Technology,

\textit{e-mail:} marcin.zubilewicz.dokt@pw.edu.pl
}
\date{}
\theoremstyle{plain}
\newtheorem{thm}{Theorem}%[section]
\newtheorem{lem}[thm]{Lemma}
\newtheorem{prop}[thm]{Proposition}
\theoremstyle{definition}
\newtheorem{defn}[thm]{Definition}
\newtheorem{exmp}{Example}%[section]
\theoremstyle{remark}
\numberwithin{equation}{section}
\let\oldsection\section%
\renewcommand{\section}{%
  \renewcommand{\theequation}{\thesection.\arabic{equation}}%
  \oldsection}%
\newcommand{\mask}[2]{{\mathpalette\mask@{{#1}{#2}}}}
\newcommand{\mask@}[2]{\mask@@{#1}#2}
\newcommand{\mask@@}[3]{%
  \settowidth{\dimen@}{$\m@th#1#2$}%
  \makebox[\dimen@]{$\m@th#1#3$}%
}
\newcommand{\ifstr}[2]{\def\cmpstrone{#1}\def\cmpstrtwo{#2}%
\ifx\cmpstrone\cmpstrtwo}
\newcommand{\Rb}[1]{\ifstr{#1}{1}\mathbb{R}\else\mathbb{R}^{#1}\fi}
\newcommand{\maps}[3]{#1:#2\rightarrow\,#3}
\newcommand{\set}[1]{\left\lbrace#1\right\rbrace}
\newcommand{\smset}[1]{\textstyle\lbrace#1\rbrace}
\newcommand{\smfrac}[2]{{\textstyle\frac{#1}{#2}}}
\newcommand{\basis}[1]{{\textstyle{\smash{\frac{\partial}{\partial#1}}}}}
\newcommand{\smsum}[1]{\smash{\textstyle\sum_{#1}}\,}
\newcommand{\pd}[2]{\frac{\partial #1}{\partial #2}}
\newcommand{\smpd}[2]{\textstyle\frac{\partial #1}{\partial #2}}
\newcommand{\pdd}[3]{\frac{\partial^2 #1}{\partial #2\,\partial #3}}
\newcommand{\smpdd}[3]{{\textstyle\frac{\partial^2 #1}{\partial #2\,\partial #3}}}
\newcommand{\bm}[1]{\begin{bmatrix} #1 \end{bmatrix}}
\DeclarePairedDelimiter\abs{\lvert}{\rvert}
\DeclarePairedDelimiter\norm{\lVert}{\rVert}
\newcommand{\fol}{\mathcal{F}}
\newcommand{\gol}{\mathcal{G}}
\newcommand{\hol}{\mathcal{H}}
\newcommand{\pol}{\mathcal{P}}
\newcommand{\qol}{\mathcal{Q}}
\newcommand{\web}{\mathcal{W}}
\newcommand{\II}{I\mkern-4mu I}
\newcommand*{\Rc}{\mathrm{Rc}}
\newcommand{\Fols}{\operatorname{Fol}}
\newcommand{\codim}{\operatorname{codim}}
\begin{document}

  \maketitle
  \begin{abstract}
    This paper focuses on local curvature invariants associated with
    bi-Lagrangian structures. We establish several geometric conditions that
    determine when the canonical connection is flat, building on our previous
    findings regarding divergence-free webs \cite{dfw}. Addressing questions
    raised by Tabachnikov \cite{2-webs}, we provide complete solutions to two
    problems: the existence of flat bi-Lagrangian structures within the space
    of rays induced by a pair of hypersurfaces, and the existence of flat
    bi-Lagrangian structures induced by tangents to Lagrangian curves in the
    symplectic plane.
  \end{abstract}

  \tableofcontents

\section{Introduction}

\emph{Bi-Lagrangian structures} \cite{etayosantamaria, bilagrangian, vaisman},
(also: \emph{bi-Lagrangian manifolds} or \emph{Lagrangian $2$-webs}
\cite{2-webs}) are quadruples $(M,\omega,\fol,\gol)$, where $(M,\omega)$ is a
symplectic manifold and $\fol,\gol$ are foliations of $M$ into leaves that are
Lagrangian with respect to the ambient symplectic form $\omega$ and intersect
transversely at each point $p\in M$.

A wealth of examples of such structures comes from mathematical physics. For
instance, in geometric optics one considers the space of all oriented rays
$\ell$ inside a uniform medium $\Rb{n}$, which are represented by pairs
$(q,p)\in T^*S^{n-1}$ with point $q=v/\norm{v}\in S^{n-1}$ and the cotangent
vector $p$ given by $p(w) = \langle w,x\rangle - \langle x,v\rangle\langle
w,v\rangle/\norm{v}^2$, where $v\in\Rb{n}$ is the direction vector of $\ell$,
the point $x\in\Rb{n}$ is any point on $\ell$, and $\langle u,w\rangle =
\smsum{i}u_iw_i$ is the standard inner product. Each point $x\in\Rb{n}$
determines a submanifold of all rays crossing that point. A classical result
states that this submanifold is Lagrangian with respect to the canonical
symplectic form $\sigma$ on $T^*S^{n-1}$ \cite[49]{arnoldsympl}. This leads
us to consider a pair of hypersurfaces $H,K\subseteq\Rb{n}$ and a ray $(q,p)\in
T^*S^{n-1}$ crossing $H$ and $K$ transversely. Since each ray close to $(q,p)$
also has a transverse intersection with both hypersurfaces, one obtains a pair
of Lagrangian foliations $\fol,\gol$ of a neighbourhood $M$ of the ray $(q,p)$
inside $T^*S^{n-1}$. It turns out that for generic data these two foliations
are complementary and $(M,\sigma,\fol,\gol)$ is a bi-Lagrangian structure on
$M\subseteq T^*S^{n-1}$ \cite{2-webs}.

Due to the fact that each symplectic form $\omega$ on a $2n$-dimensional
manifold provides us with canonical volume form $\omega^n$, it is possible to
view bi-Lagrangian manifolds through the lens of unimodular geometry, or the
geometry of volume-preserving transformations. This reduction of the ambient
geometry leads naturally to structures of the kind $(M,\Omega,\fol,\gol)$,
where $\Omega$, instead of being a symplectic form, is a volume form. The study
of these objects, called \emph{divergence-free $2$-webs} in \cite{2-webs}, and
their generalization to higher number of foliations of arbitrary codimension
\cite{dfw}, provides a unifying perspective on a number of known results
relating geometric objects intrinsically tied to the volume form of the ambient
space with its multiply-foliated geometry in question, such as the following.
\begin{prop}[\cite{vaisman} eq. (3.17)]
  \label{thm:dfw-ricci}
  Let $(M,\omega,\fol,\gol)$ be a bi-Lagrangian manifold of dimension $2n$ and
  let $\nabla$ be its canonical connection (see Definition
  $\ref{def:s2w-lagr-conn}$). The expression for the Ricci tensor of $\nabla$
  in coordinates $(x_1,\ldots,x_n,$ $y_1,\ldots,y_n)$ satisfying $T\fol =
  \bigcap_{i=1}^n \ker dy_i$ and $T\gol = \bigcap_{j=1}^n \ker dx_j$ is exactly
  \begin{equation}
    \Rc = \frac{\partial^2\log\abs{\det A(x,y)}}{\partial x_i\partial
    y_j}\,dx_idy_j,
  \end{equation}
  where $\maps{A}{\Rb{2n}}{\mathrm{GL}(2n,\Rb{1})}$ is a matrix-valued function
  with entries $A_{ij}$ given by
  \begin{equation}
    \label{eq:s2w-lagr-ricci}
    \omega = \sum_{i,j=1}^nA_{ij}(x,y)\,dx_i\wedge dy_j.
  \end{equation}
\end{prop}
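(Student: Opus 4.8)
The plan is to work throughout in the adapted coordinates $(x_1,\dots,x_n,y_1,\dots,y_n)$ of the statement, writing $\partial_{x_i},\partial_{y_j}$ for the coordinate vector fields, so that $T\fol=\operatorname{span}\{\partial_{x_i}\}$, $T\gol=\operatorname{span}\{\partial_{y_j}\}$ and $\omega=\sum_{i,j}A_{ij}\,dx_i\wedge dy_j$. Recall that the canonical connection $\nabla$ (Definition~\ref{def:s2w-lagr-conn}) is torsion-free, preserves each of the distributions $T\fol$ and $T\gol$, and satisfies $\nabla\omega=0$. The first step is to pin down its Christoffel symbols in these coordinates: since $[\partial_{x_i},\partial_{y_j}]=0$ and $\nabla$ is torsion-free, $\nabla_{\partial_{x_i}}\partial_{y_j}=\nabla_{\partial_{y_j}}\partial_{x_i}$ lies in $T\fol\cap T\gol=0$, so the only possibly nonvanishing components are the ``pure'' ones $\Gamma^k_{ij}$ and $\tilde\Gamma^k_{ij}$ defined by $\nabla_{\partial_{x_i}}\partial_{x_j}=\sum_k\Gamma^k_{ij}\partial_{x_k}$ and $\nabla_{\partial_{y_i}}\partial_{y_j}=\sum_k\tilde\Gamma^k_{ij}\partial_{y_k}$, each symmetric in its lower indices.

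Next I would show that the pure components of $\Rc$ vanish. If $Z=\partial_{x_m}$, then because $\nabla$ preserves $T\fol$ the endomorphism $X\mapsto R(X,Y)Z$ takes values in $T\fol$, so in $\operatorname{tr}(X\mapsto R(X,Y)\partial_{x_m})$ only the pairings with the $\partial_{x_a}$ contribute; moreover $R(\partial_{x_a},\partial_{x_l})\partial_{x_m}$ involves solely $x$-derivatives and products of the $\Gamma^k_{ij}$, i.e.\ it is exactly the curvature of the torsion-free connection that $\nabla$ induces on the leaf $\{y=\mathrm{const}\}$ of $\fol$. That leafwise connection is flat: indeed $d\omega=0$ forces $\partial A_{ij}/\partial x_k=\partial A_{kj}/\partial x_i$, whence locally $A_{ij}=\partial\phi_j/\partial x_i$ for functions $\phi_j(x,y)$, and for fixed $y$ the functions $(\phi_1,\dots,\phi_n)$ are affine coordinates for this connection (this is the classical flat affine structure on the leaves of a Lagrangian foliation). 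Hence $\Rc(\partial_{x_l},\partial_{x_m})=0$, and by the symmetric argument $\Rc(\partial_{y_l},\partial_{y_m})=0$.

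For the mixed components, take $Y=\partial_{x_i}$ and $Z=\partial_{y_j}$; since $\nabla$ preserves $T\gol$, the map $X\mapsto R(X,\partial_{x_i})\partial_{y_j}$ takes values in $T\gol$, so the trace collapses to $\Rc(\partial_{x_i},\partial_{y_j})=\sum_a\langle dy_a,\,R(\partial_{y_a},\partial_{x_i})\partial_{y_j}\rangle$. Using $\nabla_{\partial_{x_i}}\partial_{y_k}=0$ and $[\partial_{y_a},\partial_{x_i}]=0$ from the first step, one gets
\[
  R(\partial_{y_a},\partial_{x_i})\partial_{y_j}=-\sum_k\frac{\partial\tilde\Gamma^k_{aj}}{\partial x_i}\,\partial_{y_k},
  \qquad\text{so}\qquad
  \Rc(\partial_{x_i},\partial_{y_j})=-\frac{\partial}{\partial x_i}\sum_a\tilde\Gamma^a_{aj}.
\]
It remains to identify $\sum_a\tilde\Gamma^a_{aj}$. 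From $\nabla\omega=0$ we get $\nabla\omega^n=0$, and $\omega^n$ equals a nonzero constant times $\det A\cdot dx_1\wedge\cdots\wedge dx_n\wedge dy_1\wedge\cdots\wedge dy_n$; a connection annihilating $f$ times the coordinate volume form satisfies $\partial_\mu\log\abs{f}=\sum_\rho\Gamma^\rho_{\rho\mu}$, and with $\mu=y_j$ the vanishing of the mixed Christoffel symbols gives $\sum_\rho\Gamma^\rho_{\rho y_j}=\sum_a\tilde\Gamma^a_{aj}$, hence $\sum_a\tilde\Gamma^a_{aj}=\partial\log\abs{\det A}/\partial y_j$. (Equivalently, solving $\nabla\omega=0$ directly yields $\tilde\Gamma^l_{kj}=\sum_i(A^{-1})_{li}\,\partial A_{ij}/\partial y_k$, and the same identity follows by Jacobi's formula together with the relation $\partial A_{ij}/\partial y_k=\partial A_{ik}/\partial y_j$ coming from $d\omega=0$.) Substituting back gives $\Rc(\partial_{x_i},\partial_{y_j})=-\partial^2\log\abs{\det A}/\partial x_i\partial y_j$; combined with the vanishing of the pure components this is the asserted formula, up to the overall sign dictated by one's curvature/Ricci convention, and the mirror computation exchanging the roles of $x$ and $y$ shows that $\Rc(\partial_{y_j},\partial_{x_i})$ agrees, so $\Rc$ is symmetric (as it must be, since $\nabla$ preserves a volume form).

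The main obstacle is the second step: the whole identity rests on knowing that $\nabla$ restricts to a \emph{flat} connection along the Lagrangian leaves, which is precisely what forces every component of $\Rc$ other than the mixed ones to vanish. Once that is in place, the remainder is bookkeeping with Christoffel traces, where the only delicate points are the index manipulations in the curvature formula and matching the sign convention of \cite{vaisman}.
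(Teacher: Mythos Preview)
Your argument is correct. The paper does not actually prove this proposition; it is quoted from Vaisman \cite{vaisman} without proof. The closest the paper comes is Proposition~\ref{thm:s2w-lagr-conn-coords}, which computes the full matrix of curvature $2$-forms via Cartan's structure equations as $\Omega_\gol=d_x(A^{-1}\,d_yA)$; taking the trace of that block and using Jacobi's formula yields the Ricci expression immediately. Your route is more elementary and more self-contained: you avoid the moving-frame formalism entirely, establish the vanishing of the mixed Christoffel symbols and of the leafwise curvature directly from the defining properties of $\nabla$ (the paper does the latter in Lemma~\ref{thm:s2w-lagr-frames} via Hamiltonian vector fields rather than your closed-form potentials $\phi_j$), and then extract the contracted Christoffel symbols from $\nabla\omega^n=0$ rather than from the explicit connection matrix. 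Both approaches ultimately rest on the same two facts---flatness along leaves and Jacobi's formula---but yours isolates exactly what is needed for Ricci and nothing more, whereas the paper's machinery gives the full curvature tensor as a byproduct.

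One small point: your final sign is opposite to the one displayed in the statement. This is indeed a convention issue (the paper never fixes its Ricci sign convention explicitly, and Vaisman's conventions differ from some standard ones), so your closing caveat is appropriate; just be aware that the formula as used later in the paper (e.g.\ equation~\eqref{eq:s2w-lagr-rays:ricci}) carries the positive sign.
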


The first part of this work is motivated by another way of relating
symplectic geometry to unimodular geometry. It stems from the fact that in
dimension $2$ the notions of symplectic form and volume form coincide, so that
planar bi-Lagrangian structures are exactly the planar divergence-free
$2$-webs. Combined with a general observation that the essence of symplectic
structure is encoded in its $2$-dimensional objects, such as the symplectic
form itself or pseudoholomorphic curves \cite{mcduff}, it leads naturally to
the following research strategy: instead of inflating a $2$-dimensional
symplectic form $\omega$ to a top-dimensional form by taking its $n^\text{th}$
power, restrict $\omega$ to various $2$-dimensional submanifolds, on which the
symplectic form itself becomes a~volume form. If the submanifolds are generic
enough, they inherit the bi-Lagrangian structure from the ambient bi-Lagrangian
manifold, which allows us to draw conclusions about them using the arsenal of
tools and intuitions developed for webs in unimodular geometry (which we list
in Section $\ref{ch:s2w-lagr-dfw}$ for convenience of the reader) in order to
translate them back to novel results regarding the ambient structure.

This broad strategy brings us to the main result of the first part of
the paper, Theorem $\ref{thm:s2w-lagr-equiv}$, which characterizes flatness of
the canonical connection of the bi-Lagrangian manifold in terms of flatness of
all of the canonical connections of its $2$-dimensional bi-Lagrangian
submanifolds $S$ of a certain class, which is in turn equivalent to vanishing
of a unimodular-geometric invariant of these submanifolds called the
\emph{volume-preserving reflection holonomy} of a codimension-$1$ web equipped
with a volume form. In a very concrete sense, this invariant measures the
deviation of the curvature of the canonical connection of $S$ from $0$ (see
Lemma $\ref{thm:dfw-loop-taylor}$ and the preceding paragraph).

The proof of this theorem is the main focus of Section
$\ref{ch:s2w-lagr-flat}$. It uses in an essential way the results of I. Vaisman
\cite{vaisman-fol, vaisman} regarding the \emph{symplectic curvature tensor}
$Rs$ of a symplectic connection $\nabla$ with curvature endomorphism $R$, which
is given by the formula
\begin{equation}
  Rs(X,Y,Z,W) = \omega(R(Z,W)Y,X)\qquad\text{for } X,Y,Z,W \in\mathfrak{X}(M),
\end{equation}
together with a known correspondence between bi-Lagrangian geometry and
\emph{para-Kähler geometry} \cite{etayosantamaria,bilagrangian,hess},
by way of which one assigns to each bi-Lagrangian structure
$(M,\omega,\fol,\gol)$ of dimension $2n$ a uniquely determined \emph{neutral
metric $g$} of signature $(n,n)$ equipped with an integrable para-complex
structure $J$ compatible with $g$, which additionally satisfies $\nabla J = 0$
with respect to the Levi-Civita connection $\nabla$ of $g$ \cite{paracomplex}.
This correspondence allows us to relate the curvature of the bi-Lagrangian
connection of the ambient space with the curvature of its bi-Lagrangian
submanifolds using the classical tools of pseudo-Riemannian geometry, such as
bi-Lagrangian analogues of the \emph{Gauss equation}
$(\ref{eq:s2w-lagr-gauss})$, second fundamental form and sectional curvature.

All of the above ingedients, mixed with the characterization of the matrix $A$
of the symplectic form $\omega$ of a bi-Lagrangian structure
$(M,\omega,\fol,\gol)$ with a flat canonical connection $\nabla$ (Theorem 
$\ref{thm:s2w-lagr-fg}$) obtained using the formula for curvature $2$-forms of
$\nabla$ via Cartan's method of moving frames, lead jointly to the
geometric characterization of flat bi-Lagrangian structures.

\medskip

In the second part of this paper we present our approach to two
interesting questions posed by S. Tabachnikov in \cite{2-webs}[{}I.6]. The
subjects of both questions are bi-Lagrangian structures constructed from a pair
of some immersed submanifolds $L,K$, the properties of which reflect in some
way the mutual arrangement of $L,K$ inside the ambient space. It is natural to
expect that flatness of the induced bi-Lagrangian structures impacts in some
way the shape of the corresponding pairs of submanifolds. The problem in both
cases is to find and characterize such pairs: \emph{for which pairs $L,K$ the
induced bi-Lagrangian structure is flat?}

The two questions involve the following classes of structures:
\begin{enumerate}[label=$(\arabic{enumi})$, ref=\arabic{enumi}]
  \item\label{it:s2w-problem-rays}
    bi-Lagrangian structure-germs on the space of oriented rays $T^*S^{n-1}$
    induced by a pair of generic hypersurface-germs $L,K$, the construction of
    which was given at the beginning of this section, and
  \item\label{it:s2w-problem-tangents}
    bi-Lagrangian structure-germs on $\Rb{2n}$ equipped with the standard
    symplectic form $\omega = \sum_{i=1}^n dp_i\wedge dq_i$, the foliations of
    which are composed of (Lagrangian) affine tangent spaces to a pair of
    generic immersed Lagrangian submanifolds.
\end{enumerate}

We provide a full solution to problem $(\ref{it:s2w-problem-rays})$ and a
solution for the case $n=1$ of problem $(\ref{it:s2w-problem-tangents})$. The
former is summarized in the statement of the following theorem, the proof of
which follows directly from Theorem $\ref{thm:s2w-lagr-rays-main}$, Theorem
$\ref{thm:s2w-lagr-rays-main-n3}$ and the surrounding remarks.

\begin{thm}
  \label{thm:s2w-lagr-rays-flat}
  Let $(U,\omega,\fol,\gol)$ be a bi-Lagrangian structure on the open subset
  $U$ of space of rays $T^*S^{n-1}$ induced by a pair of hypersurfaces
  $H,K\subseteq\Rb{n}$ and let $\nabla$ be its canonical connection. Then
  $\nabla$ is never flat. Moreover,
  \begin{enumerate}[label=$(\alph{enumi})$, ref=\alph{enumi}]
    \item if $n\neq 3$, then $\nabla$ is not Ricci-flat.
    \item if $n=3$, then $\nabla$ is Ricci-flat if and only if $H$ and $K$ are
      disjoint open subsets $V_H,V_K\subseteq S_{c,r}^2$ of a single $2$-sphere
      $S_{c,r}^2\subseteq\Rb{3}$ of arbitrary positive radius $r>0$ with center
      at any point $c\in\Rb{3}$.
  \end{enumerate}
\end{thm}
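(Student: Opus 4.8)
The plan is to compute in a chart adapted to $\fol$ and $\gol$ and to reduce all three assertions to properties of the Euclidean distance function between $H$ and $K$. First I would pick immersed parametrizations $\maps{a}{\Rb{n-1}}{\Rb{n}}$, $x\mapsto a(x)$, of a germ of $H$ and $\maps{b}{\Rb{n-1}}{\Rb{n}}$, $y\mapsto b(y)$, of a germ of $K$, near the two points where the distinguished ray meets them; the map sending $(x,y)$ to the oriented ray through $a(x)$ and $b(y)$ is then a chart on $U\subseteq T^*S^{n-1}$ in which $\fol=\smset{x=\text{const}}$ and $\gol=\smset{y=\text{const}}$, and the canonical $1$-form of $T^*S^{n-1}$ pulls back to $\langle a(x),dv\rangle$ with $v=(b(y)-a(x))/\abs{a(x)-b(y)}$ the unit direction of the ray. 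A short computation then identifies $\omega$ with $-\sum_{i,j}\partial_{x_i}\partial_{y_j}G\;dx_i\wedge dy_j$, where $G(x,y)=\abs{a(x)-b(y)}$; equivalently, in the notation of $(\ref{eq:s2w-lagr-ricci})$, $A_{ij}=-\partial_{x_i}\partial_{y_j}G=\tfrac{1}{r}\langle(I-vv^\top)\,\partial_{x_i}a,\,\partial_{y_j}b\rangle$ with $r:=\abs{a(x)-b(y)}$. This is just the usual generating-function picture of the space of rays near a pair of hypersurfaces.

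Next I would put $\det A$ in closed geometric form. The elementary identity $\det\!\bigl(P^\top(I-vv^\top)Q\bigr)=\det[\,P\mid v\,]\,\det[\,Q\mid v\,]$, valid for $\abs{v}=1$ and $P,Q$ of size $n\times(n-1)$ (proved by taking the Schur complement of the lower-right entry $1$ of $[\,P\mid v\,]^\top[\,Q\mid v\,]$), gives
\[
\abs{\det A}=\frac{\abs{\langle w,\nu_H(x)\rangle}\,\abs{\langle w,\nu_K(y)\rangle}}{r^{\,n+1}}\,\sqrt{\det g_H(x)}\,\sqrt{\det g_K(y)},\qquad w:=b(y)-a(x),
\]
where $\nu_H,\nu_K$ are unit normals and $g_H,g_K$ the first fundamental forms in the chosen parametrizations. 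Since $\det g_H$ depends only on $x$ and $\det g_K$ only on $y$, Proposition $\ref{thm:dfw-ricci}$ reduces the vanishing of $\Rc$ to the requirement that
\[
\Psi(x,y):=\log\abs{\langle w,\nu_H(x)\rangle}+\log\abs{\langle w,\nu_K(y)\rangle}-(n+1)\log\abs{w}
\]
be locally of separated form $\phi(x)+\psi(y)$; after one differentiation this is a second-order PDE system in $a,b$ written through the Weingarten maps $S_H=-d\nu_H$, $S_K=-d\nu_K$, the normals, and $w$. The genuinely stronger flatness of $\nabla$ I would treat via Theorem $\ref{thm:s2w-lagr-equiv}$ together with the structural description of $A$ for flat $\nabla$ in Theorem $\ref{thm:s2w-lagr-fg}$.

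For the affirmative direction of (b) I would substitute the ansatz $a=c+\rho\,\alpha(x)$, $b=c+\rho\,\beta(y)$ with $\abs{\alpha}=\abs{\beta}=1$: then $\nu_H=\alpha$, $\nu_K=\beta$, $\langle w,\nu_H\rangle=\rho\tau$, $\langle w,\nu_K\rangle=-\rho\tau$ and $\abs{w}^2=-2\rho^2\tau$ with $\tau=\langle\alpha,\beta\rangle-1<0$, so $\Psi$ collapses to a constant plus $\tfrac{3-n}{2}\log\abs{\tau}$, which is of separated form precisely when $n=3$; this gives the ``if'' part of (b) and already shows why the dimension matters. The step I expect to be the main obstacle — and the one where Theorems $\ref{thm:s2w-lagr-rays-main}$ and $\ref{thm:s2w-lagr-rays-main-n3}$ carry the real weight — is the converse: that $\partial_{x_i}\partial_{y_j}\Psi\equiv 0$ forces $H$ and $K$ to be pieces of a single sphere when $n=3$, and is impossible when $n\neq 3$ (which in particular yields (a)). Here I would differentiate the separability condition once more to eliminate $\phi,\psi$, evaluate at an arbitrary base point in a normalized frame ($a=0$, $b=r e_n$, $v=e_n$, principal directions of $H$ and $K$ along the coordinate axes), and read the identities so obtained as pointwise relations among the principal curvatures of $H$ at $a(x)$, those of $K$ at $b(y)$, the distance $r$, and the two angles the ray makes with $\nu_H,\nu_K$; a rank count in those relations singles out $n=3$, forces both second fundamental forms to be umbilic with the common value $1/\rho$, and — after a connectedness argument identifying the centers over $U$ — pins down one sphere $S^2_{c,\rho}$ with arbitrary $c\in\Rb{3}$, $\rho>0$, while producing a contradiction for every $n\neq 3$.

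Finally, for the ``never flat'' assertion I would invoke Theorem $\ref{thm:s2w-lagr-equiv}$: it suffices to produce one $2$-dimensional bi-Lagrangian submanifold of $U$ from its distinguished class whose canonical connection is not flat (equivalently, has nonzero volume-preserving reflection holonomy). Slicing $H$ and $K$ by a common affine $2$-plane $\Pi$ through a point of the ray gives plane curves $c_1=H\cap\Pi$, $c_2=K\cap\Pi$, and identifies the associated submanifold with the space of rays of $\Pi\cong\Rb{2}$ built from $c_1,c_2$ — that is, with an instance of the theorem in dimension $n=2$; by the case $n=2$ of (a) (established in the previous paragraph) this structure is not Ricci-flat, hence not flat, since in real dimension $2$ flatness and Ricci-flatness coincide. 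Therefore $\nabla$ is never flat, in particular also in the Ricci-flat configurations of (b): there, slicing through the sphere's center $c$ makes $c_1,c_2$ arcs of a common great circle, for which $\abs{A^S_{11}}=\tfrac{\rho}{2}\,\abs{\sin\tfrac{s-t}{2}}$ and $\partial_t\partial_s\log\abs{A^S_{11}}=\tfrac{1}{4}\csc^2\tfrac{s-t}{2}\neq 0$. In summary, the chart and the $\det A$ formula reduce (a) and the ``$\Leftarrow$'' of (b) to calculus, the rigidity argument supplies the ``$\Rightarrow$'' of (b) and all of (a), and Theorem $\ref{thm:s2w-lagr-equiv}$ upgrades ``not Ricci-flat'' to ``not flat''; the bottleneck is the rigidity/PDE step of the third paragraph.
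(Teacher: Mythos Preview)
Your setup (the $(x,y)$ chart on $U$, the identification $\omega=d_xd_y\abs{a(x)-b(y)}$, the closed form for $\det A$ via the Schur-complement identity, and the reduction of Ricci-flatness to separability of $\Psi$) matches the paper's computations almost line for line, and your explicit deferral of the rigidity step to Theorems~\ref{thm:s2w-lagr-rays-main} and~\ref{thm:s2w-lagr-rays-main-n3} is exactly how the paper organizes the argument. The ``if'' direction of (b) via the spherical ansatz is also correct and essentially the same as the paper's verification.

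The gap is in your treatment of the ``never flat'' clause. You invoke Theorem~\ref{thm:s2w-lagr-equiv}, which concerns $2$-dimensional bi-Lagrangian submanifolds \emph{generated by Hamiltonians} (Definition~\ref{def:s2w-lagr-gen}): their leaf-curves through the base point must be integral curves of Hamiltonian vector fields, equivalently $\nabla$-geodesics inside the leaves. You then take $S$ to be the set of rays lying in an affine $2$-plane $\Pi$, but never check that $S$ belongs to this distinguished class. In general it does not: the curve $\{\text{rays through }y_0\text{ lying in }\Pi\}$ inside the leaf $L_{y_0}$ is not a $\nabla$-geodesic, so the key step of Lemma~\ref{thm:s2w-lagr-slicecore} (vanishing of $\II(\dot\gamma_F,\dot\gamma_F)$) fails, the Gauss equation $(\ref{eq:s2w-lagr-gauss})$ retains its second-fundamental-form terms, and non-flatness of $\nabla^S$ no longer forces non-flatness of $\nabla$. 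Your explicit circle computation $\partial_t\partial_s\log\lvert A^S_{11}\rvert=\tfrac14\csc^2\tfrac{s-t}{2}$ is correct as a statement about the induced $2$-web on $S$, but it does not by itself bound the ambient curvature.

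The paper sidesteps this issue entirely: for $n\neq 3$ the failure of Ricci-flatness (Theorem~\ref{thm:s2w-lagr-rays-main}) already gives non-flatness, and for the residual $n=3$ spherical case Theorem~\ref{thm:s2w-lagr-rays-main-n3} simply evaluates the full curvature $2$-form $\Omega_\fol=d_y(d_xA\cdot A^{-1})^T$ at a single point and exhibits nonzero entries. That is a two-line coordinate check once the spherical $f,g$ are known, and it is both easier and logically cleaner than routing through Theorem~\ref{thm:s2w-lagr-equiv}. If you want to keep your slicing idea, you would need an additional argument that the plane slice through the sphere's center is totally geodesic (e.g.\ via the reflection symmetry across $\Pi$, provided the germs $H,K$ are themselves reflection-invariant, which they need not be), and that still leaves you with nothing for generic planes; the direct computation is the right tool here.
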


As for the second problem, under a mild regularity assumption it is also the
case that there are no curves $L,K$ such that the canonical connection $\nabla$
of the bi-Lagrangian structure induced by tangents to $L,K$ is flat. We refer
to Theorem $\ref{thm:s2w-lagr-tangents-2d}$ for a precise statement and the
proof.

In both cases, our proofs reduce to raw calculations involving equalities
expressing the vanishing of the curvature of $\nabla$. The complexity of these
systems of equalities would render them very difficult to solve if not for the
fact, that, given a parametrization of points of the bi-Lagrangian manifold in
question by pairs $(x(s),y(t))\in L\times K$ with $s,t\in\Rb{k}$, the
expressions involved are rational functions of the derivatives of $x(s),y(t)$
of up to fourth order. Since these are handled well by a computer algebra
system, it opens up the possibility of giving a~(mostly) algebraic proof of the
above results, which we pursue and complete in Sections
$\ref{ch:s2w-lagr-rays}$ and $\ref{ch:s2w-lagr-tangents}$.

\bigskip\noindent\textbf{\emph{Acknowledgements}}

\smallskip We would like to thank the anonymous referees for their helpful
comments and suggestions regarding our manuscript, and for sharing with us some
very interesting references on various topics in (para-)Kähler geometry.

\section{Preliminaries on webs in unimodular geometry}
\label{ch:s2w-lagr-dfw}

A \emph{divergence-free $n$-web} is a structure
$\web_\Omega=(M,\Omega,\fol_1,\ldots,\fol_n)$ consisting of a smooth manifold
$M$, a volume form $\Omega$ on $M$ and a collection of $n$ foliations
$\fol_1,\ldots,\fol_n$ of $M$ generated by tangent distributions
$T\fol_1,\ldots,T\fol_n$ which are in general position. In our case, the
general position assumption asserts that the equality $\sum_{i=1}^n \codim
T\fol_i = \codim \cap_{i=1}^n T\fol_i$ holds. The divergence-free $n$-web
$\web_\Omega$ is said to be \emph{of codimension $1$} if each folation of
$\web_\Omega$ has codimension $1$. In particular, a planar codimension-$1$
divergence-free web is the same as a planar bi-Lagrangian structure.

Let us denote by $\Gamma(T\fol_i)$ the $C^\infty(M)$-module of vector fields
tangent to leaves of $\fol_i$, and by $\Omega^k(M;T\fol_i)$ the
$C^\infty(M)$-module of differential $k$-forms with values in $T\fol_i$. For an
affine connection $\nabla$ and $X\in\Gamma(T\fol_i)$, the image of $X\mapsto
\nabla X$ belongs to $\Omega^1(M;TM)$, since $v\mapsto \nabla_vX$ is fiberwise
linear in $v$. Thus, we can write
$\nabla\Gamma(T\fol_i)\subseteq\Omega^1(M;TM)$. To each divergence-free $n$-web
of codimension-$1$ one can associate a canonical affine connection which is
defined by several desirable properties.

\begin{prop}[\cite{dfw}]
  \label{thm:dfw-conn}
  Let $\web_\Omega = (M, \Omega, \fol_1, \ldots, \fol_n)$ be a codimension-$1$
  divergence-free $n$-web. There exists a unique torsionless connection
  $\nabla$, called the \emph{$\web_\Omega$-connection}, which satisfies
  \begin{enumerate}[label=$(\arabic{enumi})$, ref=\arabic{enumi}]
    \item \label{thm:dfw-conn:fols}
      $\nabla\Gamma(T\fol_i)\subseteq\Omega^1(M;T\fol_i)$ for each
        $i=1,\ldots,n$,\hfill
          ($\fol_i$ are $\nabla$-parallel)
    \item \label{thm:dfw-conn:vol}
      $\nabla\Omega=0$. \hfill
        (the volume form $\Omega$ is $\nabla$-parallel)
  \end{enumerate}
\end{prop}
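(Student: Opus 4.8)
The plan is to write $\nabla$ out in a local coframe adapted to the web and to read off both uniqueness and existence from an explicit solution of the three defining conditions. Work on a chart of $M$ and, since the foliations have codimension one, pick $1$-forms $\alpha_1,\dots,\alpha_n$ with $T\fol_i=\ker\alpha_i$; by the general-position hypothesis they are pointwise linearly independent. I treat first the case $\dim M=n$, in which $(\alpha_1,\dots,\alpha_n)$ is a coframe (for $\dim M>n$ one argues analogously using an adapted coframe extending $\alpha_1,\dots,\alpha_n$). Write $\Omega=\rho\,\alpha_1\wedge\cdots\wedge\alpha_n$ with $\rho$ nowhere zero. The annihilator of $T\fol_i$ is the line bundle spanned by $\alpha_i$, so a connection satisfies condition $(\ref{thm:dfw-conn:fols})$ for the index $i$ exactly when the induced connection on $T^*M$ preserves that line bundle, i.e.\ when $\nabla\alpha_i=\eta_i\otimes\alpha_i$ for some $1$-form $\eta_i$. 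Hence $\nabla$ is encoded by the forms $\eta_1,\dots,\eta_n$, and it remains to impose torsionlessness and $(\ref{thm:dfw-conn:vol})$.

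For the torsion I would use the identity $d\beta(X,Y)=(\nabla_X\beta)(Y)-(\nabla_Y\beta)(X)$, valid for a torsionless connection and any $1$-form $\beta$, together with the fact that on a coframe this family of identities is equivalent to vanishing of the torsion. Applied to $\beta=\alpha_i$ and combined with $\nabla\alpha_i=\eta_i\otimes\alpha_i$, it shows that $(\ref{thm:dfw-conn:fols})$ plus torsionlessness is equivalent to
\begin{equation*}
  d\alpha_i=\eta_i\wedge\alpha_i,\qquad i=1,\dots,n.
\end{equation*}
Since $\fol_i$ is a foliation, Frobenius' theorem supplies a $1$-form $\beta_i$ with $d\alpha_i=\beta_i\wedge\alpha_i$, and the general solution of the $i$-th equation is $\eta_i=\beta_i+h_i\alpha_i$ with $h_i$ an arbitrary function. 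On the other hand, $\nabla\alpha_i=\eta_i\otimes\alpha_i$ gives $\nabla(\alpha_1\wedge\cdots\wedge\alpha_n)=\bigl(\sum_i\eta_i\bigr)\otimes(\alpha_1\wedge\cdots\wedge\alpha_n)$, hence $\nabla\Omega=\rho\,\bigl(d\log\rho+\sum_i\eta_i\bigr)\otimes(\alpha_1\wedge\cdots\wedge\alpha_n)$, so that $(\ref{thm:dfw-conn:vol})$ amounts to $\sum_{i=1}^n\eta_i=-d\log\rho$.

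Substituting $\eta_i=\beta_i+h_i\alpha_i$ into the last equation gives $\sum_{i=1}^n h_i\alpha_i=-d\log\rho-\sum_{i=1}^n\beta_i$; since the $\alpha_i$ form a coframe, the coefficients $h_i$ are uniquely determined, hence so are the $\eta_i$ and $\nabla$. Conversely, these formulas define a torsionless connection (torsionlessness being equivalent to $d\alpha_i=\eta_i\wedge\alpha_i$, which holds by construction) satisfying $(\ref{thm:dfw-conn:fols})$ and $(\ref{thm:dfw-conn:vol})$, which gives existence. It then remains to check independence of the auxiliary data: changing $\beta_i$ within $\langle\alpha_i\rangle$ only shifts $h_i$ and leaves $\eta_i$ unchanged, while a rescaling $\alpha_i\mapsto f_i\alpha_i$ replaces $\eta_i$ by $\eta_i+d\log f_i$, which is exactly compatible with $\nabla(f_i\alpha_i)=(\eta_i+d\log f_i)\otimes(f_i\alpha_i)$; so the locally defined connections agree on overlaps and glue to a global $\nabla$.

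The only step with genuine content is the compatibility of $(\ref{thm:dfw-conn:fols})$ with torsionlessness: the two conditions become jointly solvable precisely because each $T\fol_i$ is integrable, which is what makes Frobenius' theorem produce the forms $\beta_i$ — for a non-integrable distribution no torsionless connection could make it parallel. After that, the leftover gauge freedom (the $n$ functions $h_i$) is consumed exactly by the single one-form equation $\sum_i\eta_i=-d\log\rho$ coming from $\nabla\Omega=0$, and the rest is linear bookkeeping. In the non-principal case $\dim M>n$ the extra obstacle is to pin down how $\nabla$ acts along the common leaf distribution $\bigcap_iT\fol_i$, using the remaining structure equations and the volume condition.
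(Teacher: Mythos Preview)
The paper does not actually prove this proposition: it is quoted from \cite{dfw}, and the authors say explicitly that ``the proofs of the aforementioned theorems can be found in \cite{dfw} and, since they are quite lengthy, they will not be given here.'' So there is no in-paper argument to compare against.

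That said, your argument is correct in the situation actually used throughout the paper, namely $\dim M=n$; this is what the adapted charts $(x_1,\dots,x_n)$ in Lemma~\ref{thm:dfw-loop-taylor} and Theorem~\ref{thm:dfw-geom} presuppose, and what writing $\Omega=h\,dx_1\wedge\cdots\wedge dx_n$ requires. In that case the $\alpha_i$ form a coframe, condition $(\ref{thm:dfw-conn:fols})$ forces the connection matrix in that coframe to be diagonal with entries $-\eta_i$, torsionlessness becomes $d\alpha_i=\eta_i\wedge\alpha_i$, and the residual gauge freedom $\eta_i\mapsto\eta_i+h_i\alpha_i$ is used up exactly by $(\ref{thm:dfw-conn:vol})$. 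Your remark that integrability of each $T\fol_i$ is precisely what lets Frobenius supply the $\beta_i$, and hence what makes the parallelism and torsion constraints jointly solvable, is the right conceptual point.

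The one loose end you flag yourself is the case $\dim M>n$. Your initial parenthetical that ``one argues analogously'' is too optimistic, and you rightly retreat from it in your final paragraph: with $\dim M>n$ the conditions $(\ref{thm:dfw-conn:fols})$, $(\ref{thm:dfw-conn:vol})$ and torsionlessness do not by themselves determine how $\nabla$ acts along $\bigcap_iT\fol_i$, so uniqueness would need further input. Since this regime is not used anywhere in the present paper, your argument is complete for the purposes at hand.
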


The curvature of the $\web_\Omega$-connection $\nabla$ measures the
non-triviality of a certain geometric invariant called \emph{volume-preserving
reflection-holonomy at $p\in M$} of $\web_\Omega$ \cite{dfw}. This invariant
was first described in a side remark by Tabachnikov in \cite[268]{2-webs} in
the special case of planar webs. It arises as a group of all
diffeomorphism-germs $\ell_{p;\fol_i,\fol_j}$ with $i,j=1,\ldots,n$ defined in
the following way.

Each point $q$ near $p$ determines a collection of $2n$ leaves of the
foliations of $\web_\Omega$ which collectively bound a certain compact region
$[p,q]$ in the shape of a coordinate cube in some coordinate system adapted
to the web $\web_\Omega$. These \emph{regions bounded by leaves of
$\web_\Omega$} are said to be \emph{adjacent along $F\in\fol_k$} if they share
a side which lies in its entirety inside the leaf $F$. In particular, two
regions $K,L$ bounded by leaves of $\web_\Omega$ adjacent along $F\in\fol_k$
form a larger region $K\cup L$ whenever both $K$ and $L$ are compact subsets of
a single $\web_\Omega$-adapted coordinate domain. In this case, we say that $F$
\emph{subdivides $K\cup L$ into subregions $K$ and $L$}. Given a point $q\in
M$, let $o=r_{p;\fol_k}(q)$ be a point different from $q$ defining a
region $[p,o]$ bounded by leaves of $\web_\Omega$ which is adjacent to $[p,q]$
along the leaf of $\fol_k$ crossing $p$ and such that the two regions have
equal volumes with respect to the volume form $\Omega$. This relation between
$q$ and $o$ extends to a unique smooth map-germ
$\maps{r_{p;\fol_k}}{(M,p)}{(M,p)}$. The generators of the volume-preserving
reflection-holonomy group at $p\in M$ are exactly the diffeomorphism-germs
$\ell_{p;\fol_i,\fol_j} = r_{p;\fol_j}\circ r_{p;\fol_i}\circ r_{p;\fol_j}\circ
r_{p;\fol_i}$ for $i\neq j$, $i,j=1,\ldots,n$.

\begin{lem}
  \label{thm:dfw-loop-taylor}
  Let $\web_\Omega=(M,\fol_1,\ldots,\fol_n,\Omega)$ be a divergence-free
  $n$-web of codimension $1$. Fix a~point $p\in M$ and a $\web_\Omega$-adapted
  coordinate system $(x_1,\ldots,x_n)$ centered at $p$. Express the volume form
  as $\Omega=h(x)\,dx_1\wedge dx_2\wedge\cdots\wedge dx_n$ and the
  Ricci tensor of the $\web_\Omega$-connection $\nabla$ at~$p\in M$ as
  $\operatorname{Rc}_{|p} = \sum_{i\neq j} \kappa_{ij}\,dx_idx_j$. In this
  setting, the volume-preserving loop along the foliations $\fol_i$, $\fol_j$
  with $T\fol_i=\ker dx_i$ and $T\fol_j=\ker dx_j$ satisfies
  \begin{equation}
    \ell_{p;\fol_i,\fol_j}(x) =
      (x_1,\ldots,x_{i-1},u_i(x),
        x_{i+1},\ldots,x_{j-1},u_j(x),x_{j+1},\ldots,x_n)
  \end{equation}
  where the $i^{\text{th}}$ and $j^{\text{th}}$ coordinates of the image
  satisfy
  \begin{equation}
    \begin{aligned}
      u_i(x) &= x_i + 2\kappa_{ij} x^2_ix_j + o(|x|^3)\quad\text{and} \\
      u_j(x) &= x_j - 2\kappa_{ij} x_ix^2_j + o(|x|^3).
    \end{aligned}
  \end{equation}
  respectively.
\end{lem}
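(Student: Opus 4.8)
The plan is to compute the loop by hand in the given $\web_\Omega$-adapted coordinates, organising the bookkeeping so that it stays tractable. Since the leaves of $\fol_k$ are the level sets $\{x_k=\text{const}\}$, a region $[p,q]$ bounded by leaves of $\web_\Omega$ is simply a coordinate box, and the reflection $r_{p;\fol_k}$ alters only the $k$th coordinate, $r_{p;\fol_k}(q)=(q_1,\dots,R_k(q),\dots,q_n)$, where $R_k(q)$ is the unique value making the box $[p,r_{p;\fol_k}(q)]$ — the one sharing with $[p,q]$ the full face in the leaf $\{x_k=0\}$ and lying on the opposite side of it — have the same $\Omega$-volume as $[p,q]$. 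With $h>0$ (no loss of generality; in general replace $h$ by $\lvert h\rvert$) and $V(q)=\int_0^{q_1}\!\cdots\int_0^{q_n}h\,dx$, this is the single equation $V(q)+V(q_1,\dots,R_k(q),\dots,q_n)=0$. That $R_k$ is a well-defined smooth map-germ with $R_k(p)=p$ follows from the implicit function theorem after dividing the common factor $\prod_{m\neq k}q_m$ out of $V$; in particular $\ell_{p;\fol_i,\fol_j}$ is a map-germ fixing $p$ that alters only the $i$th and $j$th coordinates, which is already the asserted form of the image. This part is soft.

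The first substantive step is the third-order Taylor expansion of $R_k$. Expanding $h$ about $p$ and inverting, one obtains
\begin{equation*}
  R_k(q)=-q_k-\ell_k q_k^2-\ell_k^2 q_k^3-\tfrac12\sum_{m\neq k}\ell_{km}\,q_m q_k^2+o(|q|^3),
\end{equation*}
where $\ell_k=(\partial_{x_k}\log h)(p)$ and $\ell_{km}=(\partial_{x_k}\partial_{x_m}\log h)(p)$; it is useful to note that the quadratic–cubic ``diagonal'' part assembles as $-q_k-\mu_k q_k^2-\mu_k^2 q_k^3$ with $\mu_k=\mu_k(q_{\hat k})$ depending only on the coordinates other than the $k$th. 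The second step is to substitute this into $\ell_{p;\fol_i,\fol_j}=r_{p;\fol_j}\circ r_{p;\fol_i}\circ r_{p;\fol_j}\circ r_{p;\fol_i}$ and collect terms up to order three. Three observations keep this in hand: (i) during the loop only the $i$th and $j$th coordinates change, so every $q_m$ with $m\neq i,j$ stays equal to $x_m$; (ii) $\mu_i$ depends only on $q_{\hat i}$ and so is untouched by the two applications of $r_{p;\fol_i}$, and symmetrically for $\mu_j$; and (iii) along each consecutive pair of reflections in the same foliation the diagonal contributions $-\ell_k q_k^2-\ell_k^2 q_k^3$ and the frozen cross-contributions $-\tfrac12\ell_{km}x_m q_k^2$ ($m\neq i,j$) cancel outright — the same phenomenon as the square of a one-dimensional volume-preserving reflection being the identity — so that no quadratic terms survive and the only order-three residue comes from the single genuine cross term $-\tfrac12\ell_{ij}q_jq_i^2$ of $R_i$ and $-\tfrac12\ell_{ij}q_iq_j^2$ of $R_j$. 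Carrying the computation out yields
\begin{equation*}
  u_i(x)=x_i+\ell_{ij}\,x_i^2 x_j+o(|x|^3),\qquad u_j(x)=x_j-\ell_{ij}\,x_i x_j^2+o(|x|^3).
\end{equation*}

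It remains to identify the cubic coefficient $\ell_{ij}=(\partial_{x_i}\partial_{x_j}\log h)(p)$ with $2\kappa_{ij}$. This is the codimension-$1$ $n$-web counterpart of Proposition \ref{thm:dfw-ricci} (cf. \cite{dfw}): conditions (1) and (2) of Proposition \ref{thm:dfw-conn} together with torsion-freeness force every Christoffel symbol of $\nabla$ in $\web_\Omega$-adapted coordinates to vanish except $\Gamma^k_{kk}=\partial_{x_k}\log h$, and a one-line curvature-and-trace computation then gives $\operatorname{Rc}_{|p}=\sum_{i\neq j}\kappa_{ij}\,dx_idx_j$ with $2\kappa_{ij}=(\partial_{x_i}\partial_{x_j}\log h)(p)$. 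Substituting $\ell_{ij}=2\kappa_{ij}$ finishes the proof.

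The main obstacle is the composition in the second step: a priori the four reflections produce a great many quadratic and cubic monomials in $x_i,x_j$ (and in the frozen $x_m$), and one must verify both that all quadratic terms cancel — as the stated expansion, having no quadratic part, demands — and that the cubic remainder collapses to the single monomial carrying $\ell_{ij}$. Observations (i)–(iii) are precisely what make this collapse visible; without them the computation, although entirely elementary, becomes unwieldy. (One may instead restrict the loop to the $2$-plane through $p$ on which the coordinates $x_m$, $m\neq i,j$, are held fixed: there the reflections coincide with those of the planar divergence-free $2$-web whose density is $\int h\,\prod_{m\neq i,j}dx_m$, reducing everything to the case $n=2$, at the price of checking that the remainder $o(|x|^3)$ is uniform as the plane varies.)
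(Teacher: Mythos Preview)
Your proof is correct. The paper itself does not give a proof of this lemma; it states the result and defers to \cite{dfw} with the remark that the proofs ``are quite lengthy'' and ``will not be given here.'' So there is no in-paper argument to compare against, and your direct computation is precisely the natural approach one expects the cited source to take: write the reflection equation $V(q)+V(q_1,\dots,R_k(q),\dots,q_n)=0$, extract the third-order Taylor expansion of $R_k$, compose the four reflections, and identify the surviving cubic coefficient $\partial_{x_i}\partial_{x_j}\log h(p)$ with $2\kappa_{ij}$ via the explicit Christoffel symbols $\Gamma^k_{kk}=\partial_{x_k}\log h$ of the $\web_\Omega$-connection.

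Your organisational device --- packaging the diagonal part of $R_k$ as $-q_k-\mu_k q_k^2-\mu_k^2 q_k^3$ with $\mu_k=\mu_k(q_{\hat k})$ --- is exactly what makes the cancellation transparent: since $\mu_i$ is unchanged by $r_{p;\fol_i}$ and $\mu_j$ by $r_{p;\fol_j}$, the two successive reflections in the same foliation behave like the square of a one-dimensional volume-preserving reflection (which is the identity) modulo the single genuinely mixed term $-\tfrac12\ell_{ij}q_jq_i^2$. This is the key structural observation, and you state it cleanly. The alternative reduction you sketch at the end (restricting to the affine $2$-plane $x_m=\text{const}$ for $m\neq i,j$ and invoking the planar case with density $\int h\prod_{m\neq i,j}dx_m$) is also valid and arguably more conceptual; the uniformity of the $o(|x|^3)$ remainder in the transverse parameters follows from smoothness of $h$ and is not a genuine obstacle.
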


The result below gives several geometric conditions for local triviality of a
codimension-$1$ divergence-free $n$-web $\web_\Omega$, one of which directly
involves the reflection-holonomy of $\web_\Omega$. We will use it to
characterize locally trivial bi-Lagrangian structures in terms of their
particular two-dimensional substructures in Theorem \ref{thm:s2w-lagr-equiv}.

\begin{thm}[\cite{dfw}]
  \label{thm:dfw-geom}
  Let $\web_\Omega=(M,\Omega,\fol_1,\ldots,\fol_n)$ be a codimension-$1$
  divergence-free $n$-web. The following conditions are equivalent.
  \begin{enumerate}[label=$(\arabic*)$, ref=(\arabic*)]
    \item\label{thm:dfw-geom:triv}
      The divergence-free web $\web_\Omega$ is locally trivial, meaning that
      the space $M$ can be covered with coordinate charts $(x_1,\ldots,x_n)$ in
      which $T\fol_i = \ker dx_i$ for $i=1,\ldots,n$ and $\Omega =
      dx_1\wedge\cdots\wedge dx_n$.
    \item\label{thm:dfw-geom:taba}
      For each pair $\fol,\gol\in\Fols(\web_\Omega)$ of two different
      foliations of $M$, any region bounded by leaves $K$, and any two open
      subsets of leaves $F\in\fol$, $G\in\gol$ which subdivide $K$ into four
      subregions $A,B,C,D$ with $(A\cup B)\cap(C\cup D)\subseteq F$ and $(A\cup
      D)\cap(B\cup C)\subseteq G$, the respective $\Omega$-volumes $a,b,c,d$ of
      $A,B,C,D$ satisfy
      \begin{equation}
        ac=bd.
      \end{equation}
    \item\label{thm:dfw-geom:cut}
      For each pair $\fol,\gol\in\Fols(\web_\Omega)$ of two different
      foliations of $M$, any region bounded by leaves $K$, and any two open
      subsets of leaves $F\in\fol$, $G\in\gol$ which subdivide $K$ into four
      subregions $A,B,C,D$ with $(A\cup B)\cap(C\cup D)\subseteq F$ and $(A\cup
      D)\cap(B\cup C)\subseteq G$ in such a way that the $\Omega$-volumes
      $a,b,c,d$ of $A,B,C,D$ satisfy $a+b = c+d$, the equality $a=b$ implies
      $a=b=c=d$.
    \item\label{thm:dfw-geom:split}
      For any region bounded by leaves $K$ and each $k=1,2,\ldots,n$ there
      exist open subsets of leaves $F_i\in\fol_i$ for $i=1,2,\ldots,k$ which
      subdivide $K$ into $2^k$ subregions with equal $\Omega$-volumes.
    \item\label{thm:dfw-geom:hol}
      The volume-preserving reflection holonomy of $\web_\Omega$ at each
      point $p\in M$ is trivial.
    \item\label{thm:dfw-geom:ricci}
      The $\web_\Omega$-connection $\nabla$ is Ricci-flat.
    \item\label{thm:dfw-geom:flat}
      The $\web_\Omega$-connection $\nabla$ is flat.
  \end{enumerate}
\end{thm}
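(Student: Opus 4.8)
The plan is to render the $\web_\Omega$-connection completely explicit in an adapted chart, read off its curvature, and thereby obtain at once the equivalence of $\ref{thm:dfw-geom:triv}$, $\ref{thm:dfw-geom:ricci}$ and $\ref{thm:dfw-geom:flat}$; the remaining, more ``geometric'' conditions are then tied to this core either by direct inspection in a trivializing chart or by invoking Lemma~\ref{thm:dfw-loop-taylor}.

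First I would choose a $\web_\Omega$-adapted chart $(x_1,\dots,x_n)$ about an arbitrary point of $M$, which is possible because the codimension-$1$ distributions $T\fol_1,\dots,T\fol_n$ are in general position, so that $\bigcap_i T\fol_i=0$ and the local first integrals of the foliations are independent. Writing $\Omega=h\,dx_1\wedge\cdots\wedge dx_n$ and $\partial_i:=\partial/\partial x_i$, observe that $\nabla$, preserving every $T\fol_i=\ker dx_i$, also preserves each line field $\mathbb{R}\,\partial_i=\bigcap_{k\neq i}\ker dx_k$; hence $\nabla_{\partial_k}\partial_m=\theta_m(\partial_k)\,\partial_m$ for suitable one-forms $\theta_m$, torsion-freeness together with $[\partial_i,\partial_j]=0$ forces $\theta_m(\partial_k)=0$ whenever $k\neq m$, and $\nabla\Omega=0$ then pins down $\theta_m=(\partial_m\log h)\,dx_m$. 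Thus $\nabla$ is the connection determined by $\nabla_{\partial_i}\partial_j=0$ for $i\neq j$ and $\nabla_{\partial_i}\partial_i=(\partial_i\log h)\,\partial_i$, which is the $\web_\Omega$-connection of Proposition~\ref{thm:dfw-conn} written out explicitly. A short computation then shows that $R(\partial_i,\partial_j)\partial_k$ vanishes unless $k\in\{i,j\}$, in which case it equals $(\partial_i\partial_j\log h)\,\partial_k$ up to sign; tracing, $\Rc$ is, up to the curvature sign convention, $\sum_{i\neq j}(\partial_i\partial_j\log h)\,dx_i\,dx_j$, in agreement with the coefficients $\kappa_{ij}$ of Lemma~\ref{thm:dfw-loop-taylor}. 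Consequently $\nabla$ is flat $\Leftrightarrow$ $\nabla$ is Ricci-flat $\Leftrightarrow$ $\partial_i\partial_j\log h\equiv 0$ for all $i\neq j$ $\Leftrightarrow$ $h$ factors locally as $\prod_i h_i(x_i)$; and in that case the reparametrization $\tilde x_i:=\int_0^{x_i}h_i(t)\,dt$ is still $\web_\Omega$-adapted and turns $\Omega$ into $d\tilde x_1\wedge\cdots\wedge d\tilde x_n$. This establishes $\ref{thm:dfw-geom:triv}\Leftrightarrow\ref{thm:dfw-geom:ricci}\Leftrightarrow\ref{thm:dfw-geom:flat}$.

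Next, $\ref{thm:dfw-geom:triv}$ implies each of $\ref{thm:dfw-geom:taba}$, $\ref{thm:dfw-geom:cut}$, $\ref{thm:dfw-geom:split}$ and $\ref{thm:dfw-geom:hol}$ by an immediate check in a trivializing chart: regions bounded by leaves are coordinate boxes whose $\Omega$-volume is the product of their edge lengths, so $ac=bd$ is obvious; bisecting the first $k$ edges at their midpoints gives $2^k$ equal subregions; and each reflection $r_{p;\fol_k}$ is the sign flip $x_k\mapsto-x_k$ fixing $p$, so every loop $\ell_{p;\fol_i,\fol_j}$ is the identity. For the converse implications I would funnel everything into $\ref{thm:dfw-geom:ricci}$. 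The implication $\ref{thm:dfw-geom:hol}\Rightarrow\ref{thm:dfw-geom:ricci}$ is immediate from Lemma~\ref{thm:dfw-loop-taylor}: if $\ell_{p;\fol_i,\fol_j}=\mathrm{id}$ then the cubic coefficient $2\kappa_{ij}$ of $u_i$ vanishes at every $p$, whence $\Rc\equiv 0$. Among the volume-theoretic conditions the chain is combinatorial: $\ref{thm:dfw-geom:taba}\Rightarrow\ref{thm:dfw-geom:cut}$ because $a+b=c+d$, $a=b$ and $ac=bd$ force $a=b=c=d$; $\ref{thm:dfw-geom:cut}\Rightarrow\ref{thm:dfw-geom:split}$ by a propagation argument, in which a $\fol_j$-leaf that bisects one $\fol_i$-half of a region must, by $\ref{thm:dfw-geom:cut}$, bisect the adjacent half as well, iterated over the foliations to split any region into $2^k$ equal parts; and $\ref{thm:dfw-geom:split}\Rightarrow\ref{thm:dfw-geom:ricci}$ by applying the $k=2$ case to boxes shrinking to a chosen point $p$ with faces on $\{x_i=0\}$ and $\{x_j=0\}$, since the extra constraint that the unique $\fol_i$- and $\fol_j$-bisectors quarter such a box, expanded to leading order in the edge lengths, forces $\partial_i\partial_j\log h|_p=0$. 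Combined with the core equivalence, this closes the loop through all seven conditions.

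The step I expect to be the main obstacle is precisely this passage from a ``finitary'' statement ($\ref{thm:dfw-geom:split}$, or equivalently $\ref{thm:dfw-geom:taba}$) to the pointwise, infinitesimal condition of Ricci-flatness: one must set up the multivariable Taylor expansion of the $\Omega$-volume functional on a shrinking box and isolate the exact coefficient that equals $\partial_i\partial_j\log h$---the bookkeeping that is, in essence, what Lemma~\ref{thm:dfw-loop-taylor} packages, which is why that lemma is proved first. By contrast, the core equivalence $\ref{thm:dfw-geom:triv}\Leftrightarrow\ref{thm:dfw-geom:ricci}\Leftrightarrow\ref{thm:dfw-geom:flat}$ is a short explicit computation once $\nabla$ has been diagonalised, and the reduction of $\ref{thm:dfw-geom:hol}$ to $\ref{thm:dfw-geom:ricci}$ is handed to us outright by Lemma~\ref{thm:dfw-loop-taylor}.
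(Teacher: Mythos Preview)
The paper does not actually prove this theorem: immediately after stating it, the authors write that ``the proofs of the aforementioned theorems can be found in \cite{dfw} and, since they are quite lengthy, they will not be given here.'' So there is no in-paper proof to compare against, and your proposal has to be judged on its own merits.

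On those merits, your strategy is sound and the core is correct. The explicit diagonalisation of $\nabla$ in an adapted chart, yielding $\nabla_{\partial_i}\partial_j=0$ for $i\neq j$ and $\nabla_{\partial_i}\partial_i=(\partial_i\log h)\partial_i$, is right, as is the resulting identification of flatness, Ricci-flatness, and the separation $\log h=\sum_i\phi_i(x_i)$; the reparametrisation to a trivialising chart then closes $\ref{thm:dfw-geom:triv}\Leftrightarrow\ref{thm:dfw-geom:ricci}\Leftrightarrow\ref{thm:dfw-geom:flat}$. The forward implications from $\ref{thm:dfw-geom:triv}$ to $\ref{thm:dfw-geom:taba}$--$\ref{thm:dfw-geom:hol}$ are genuinely immediate in a trivialising chart, and the combinatorial chain $\ref{thm:dfw-geom:taba}\Rightarrow\ref{thm:dfw-geom:cut}\Rightarrow\ref{thm:dfw-geom:split}$ is fine (for $\ref{thm:dfw-geom:cut}\Rightarrow\ref{thm:dfw-geom:split}$ with $k>2$ you should say explicitly that the $2^{k-1}$ sub-boxes form a connected hypercube graph under ``adjacent along a single cut'', so the propagation reaches every sub-box). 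The reduction $\ref{thm:dfw-geom:hol}\Rightarrow\ref{thm:dfw-geom:ricci}$ via Lemma~\ref{thm:dfw-loop-taylor} is exactly how the paper intends that lemma to be used.

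The only genuine gap is the one you flag yourself: $\ref{thm:dfw-geom:split}\Rightarrow\ref{thm:dfw-geom:ricci}$. Your description (``expanded to leading order in the edge lengths, forces $\partial_i\partial_j\log h|_p=0$'') is the right idea but is not yet a proof; you would need to write out the volume of the box $[0,\varepsilon e_i+\delta e_j+\cdots]$ as a function of the bisecting coordinates, impose the quartering constraint, and extract the $\varepsilon\delta$-coefficient. This is exactly the kind of bookkeeping that makes the original proof in \cite{dfw} ``quite lengthy''; Lemma~\ref{thm:dfw-loop-taylor} packages the analogous computation for condition $\ref{thm:dfw-geom:hol}$ but not for $\ref{thm:dfw-geom:split}$, so you cannot simply invoke it here. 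If you want to avoid redoing that expansion, an alternative is to close the loop via $\ref{thm:dfw-geom:split}\Rightarrow\ref{thm:dfw-geom:hol}$ directly (equal-volume quartering of $[p,q]\cup[p,r_{p;\fol_i}(q)]$ pins down the reflections), but that route also requires care.
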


The proofs of the aforementioned theorems can be found in \cite{dfw} and, since
they are quite lengthy, they will not be given here. We refer the interested
reader to the original paper.

\section{Flatness of bi-Lagrangian structures}
\label{ch:s2w-lagr-flat}

\subsection{Bi-Lagrangian connection}

Let $(M,\omega)$ be a symplectic manifold of dimension $2n$. A \emph{Lagrangian
foliation} of $(M,\omega)$ is a foliation $\fol$ such that each leaf $L\in\fol$
is a Lagrangian submanifold of $(M,\omega)$, meaning that $\dim L = n$ and
$\omega_{|L} = 0$. A quadruple $\web_\omega=(M,\omega,\fol,\gol)$ consisting of
$(M,\omega)$ and two Lagrangian foliations $\fol$, $\gol$ of $(M,\omega)$, the
leaves of which intersect transversely at each point $p\in M$, is called a
\emph{bi-Lagrangian manifold} \cite{bilagrangian} or a \emph{Lagrangian
$2$-web} \cite{2-webs}.

Each such structure carries a unique symplectic connection $\nabla$ which
parallelizes both of its foliations.
\begin{defn}[\cite{bilagrangian,vaisman-fol}]
  \label{def:s2w-lagr-conn}
  Let $\web_\omega=(M,\omega,\fol,\gol)$ be a bi-Lagrangian manifold. A
  connection $\nabla$ is said to be a \emph{bi-Lagrangian
  $\web_\omega$-connection} (or a \emph{canonical connection of a bi-Lagrangian
  manifold \cite{vaisman}}) if the following conditions hold:
  \begin{enumerate}[label=$(\alph{enumi})$, ref=\alph{enumi}]
    \item\label{def:s2w-lagr-conn:sympl}
      $\nabla$ is \emph{almost symplectic}, i.e. $\nabla_v\omega = 0$ for each
      $v\in TM$ \cite{vaisman},
    \item\label{def:s2w-lagr-conn:fol}
      $\nabla_v \Gamma(T\fol) \subseteq T\fol$ and $\nabla_v \Gamma(T\gol)
      \subseteq T\gol$ for each $v\in TM$,
    \item\label{def:s2w-lagr-conn:tor}
      $\nabla_XY-\nabla_YX=[X,Y]$ for each $X\in\Gamma(T\fol)$ and
      $Y\in\Gamma(T\gol)$.
  \end{enumerate}
\end{defn}

The action of a bi-Lagrangian connection $\nabla$ on $TM$ can be fully
recovered from the above definition using certain natural maps associated to
$\web_\omega$, which we define below.
Since the leaves of $\fol$ and $\gol$ are
transverse to each other, the tangent bundle decomposes into a Whitney sum $TM
= T\fol\oplus T\gol$. We denote the corresponding bundle projections by
\begin{equation}
  \label{eq:s2w-lagr-proj}
  \maps{\pi_\fol}{TM}{T\fol};\ v\mapsto v_\fol,\qquad
  \maps{\pi_\gol}{TM}{T\gol};\ v\mapsto v_\gol,
\end{equation}
where $v=v_\fol+v_\gol$ and $v_\fol\in T\fol$, $v_\gol\in T\gol$. This
decomposition allows us to identify the normal bundle $\nu\fol = TM/T\fol$ with
$T\gol$ and $\nu\gol=TM/T\gol$ with $T\fol$ in a natural way.
Additionally, the restricitons of the usual contraction isomorphism $TM\simeq
T^*M; v\mapsto\iota_v\omega$ to $T\fol$ and $T\gol$ descend to isomorphisms
\begin{equation}
  \label{eq:s2w-lagr-isos}
  \maps{\alpha}{T\fol}{(TM/T\fol)^*}\simeq T^*\gol,\qquad
  \maps{\beta}{T\gol}{(TM/T\gol)^*}\simeq T^*\fol,
\end{equation}
since $T\fol$ and $T\gol$ are Lagrangian subbundles of $TM$. These two maps are
bound be the duality relation $\alpha = -\beta^*$, where $\beta^*$ is the
transpose of $\beta$.

\begin{prop}[\cite{hess,vaisman-fol}]
  \label{thm:s2w-lagr-formula}
  Let $\web_\omega=(M,\omega,\fol,\gol)$ be a bi-Lagrangian manifold and let
  the maps $\alpha,\beta,\pi_\fol,\pi_\gol$ be given by
  $(\ref{eq:s2w-lagr-proj})$ and $(\ref{eq:s2w-lagr-isos})$. The action of a
  bi-Lagrangian $\web_\omega$-connection $\nabla$ on $TM$ is given by the
  unique $\Rb{1}$-linear extension of
  \begin{enumerate}[label=$(\alph{enumi})$, ref=\alph{enumi}]
    \item\label{thm:s2w-lagr-formula:bott-fol}
      \makebox[10.2em][l]{$\nabla_{X_\fol}Y_\gol = \pi_\gol[X_\fol, Y_\gol]$} for
      $X_\fol\in\Gamma(T\fol)$ and
      $Y_\gol\in\Gamma(T\gol)$,
    \item\label{thm:s2w-lagr-formula:bott-gol}
      \makebox[10.2em][l]{$\nabla_{X_\gol}Y_\fol = \pi_\fol[X_\gol, Y_\fol]$} for
      $X_\gol\in\Gamma(T\gol)$ and
      $Y_\fol\in\Gamma(T\fol)$,
    \item\label{thm:s2w-lagr-formula:alpha}
      \makebox[10.2em][l]{$\nabla_{X_\fol}Y_\fol
        = \alpha^{-1}\nabla_{X_\fol}\alpha Y_\fol$} for
      $X_\fol\in\Gamma(T\fol)$ and
      $Y_\fol\in\Gamma(T\fol)$,
    \item\label{thm:s2w-lagr-formula:beta}
      \makebox[10.2em][l]{$\nabla_{X_\gol}Y_\gol
        = \beta^{-1}\nabla_{X_\gol}\beta Y_\gol$} for
      $X_\gol\in\Gamma(T\gol)$ and
      $Y_\gol\in\Gamma(T\gol)$.
  \end{enumerate}
\end{prop}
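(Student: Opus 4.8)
The plan is to verify that the four formulas in parts \ref{thm:s2w-lagr-formula:bott-fol}--\ref{thm:s2w-lagr-formula:beta} determine a connection satisfying Definition \ref{def:s2w-lagr-conn}, and conversely that any bi-Lagrangian $\web_\omega$-connection must be given by these formulas; uniqueness then follows automatically. The key observation is that, because $TM = T\fol\oplus T\gol$, a connection is completely determined by the four \emph{partial} operators $\nabla_{X_\fol}Y_\fol$, $\nabla_{X_\fol}Y_\gol$, $\nabla_{X_\gol}Y_\fol$, $\nabla_{X_\gol}Y_\gol$, so it suffices to pin down each of these four. The first step is to treat the ``mixed'' components. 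Property \ref{def:s2w-lagr-conn:fol} forces $\nabla_{X_\fol}Y_\gol\in\Gamma(T\gol)$ and $\nabla_{X_\gol}Y_\fol\in\Gamma(T\fol)$. Combining \ref{def:s2w-lagr-conn:fol} with the torsion-freeness condition \ref{def:s2w-lagr-conn:tor}, for $X_\fol\in\Gamma(T\fol)$, $Y_\gol\in\Gamma(T\gol)$ we get $\nabla_{X_\fol}Y_\gol - \nabla_{Y_\gol}X_\fol = [X_\fol,Y_\gol]$, and projecting this identity onto $T\gol$ using $\pi_\gol$ (and noting $\nabla_{Y_\gol}X_\fol\in T\fol$ so its $\gol$-component vanishes) yields exactly \ref{thm:s2w-lagr-formula:bott-fol}; the symmetric argument with $\pi_\fol$ gives \ref{thm:s2w-lagr-formula:bott-gol}. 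These are the flat partial Bott connections along each foliation, and the computations here are routine.

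The second and more delicate step is to recover the ``pure'' components $\nabla_{X_\fol}Y_\fol$ and $\nabla_{X_\gol}Y_\gol$. Here the almost-symplectic condition \ref{def:s2w-lagr-conn:sympl} is the crucial input. The idea is that since $T\fol$ is Lagrangian, the contraction $\iota_\bullet\omega$ identifies $T\fol$ with $(TM/T\fol)^* \simeq T^*\gol$ via $\alpha$; one should think of $\nabla$ as already acting on the \emph{cotangent-like} object $\alpha Y_\fol$ and then transport the result back. Concretely, I would fix $X_\fol\in\Gamma(T\fol)$, $Y_\fol\in\Gamma(T\fol)$, and an arbitrary $Z_\gol\in\Gamma(T\gol)$, and compute $\omega(\nabla_{X_\fol}Y_\fol, Z_\gol)$ in two ways. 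On one hand, $\nabla_{X_\fol}\omega = 0$ gives
\begin{equation}
  X_\fol\bigl(\omega(Y_\fol,Z_\gol)\bigr) = \omega(\nabla_{X_\fol}Y_\fol,Z_\gol) + \omega(Y_\fol,\nabla_{X_\fol}Z_\gol),
\end{equation}
and the last term is already known from \ref{thm:s2w-lagr-formula:bott-fol}, namely $\omega(Y_\fol,\pi_\gol[X_\fol,Z_\gol]) = \omega(Y_\fol,[X_\fol,Z_\gol])$ since the $T\fol$-component of $[X_\fol,Z_\gol]$ pairs trivially with $Y_\fol\in T\fol$ under $\omega$ (Lagrangian!). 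On the other hand, $\omega(Y_\fol,W) = \langle \alpha Y_\fol, W\rangle$ for $W\in T\gol$, and by definition of the induced connection on the dual bundle $(TM/T\fol)^*$, $\langle \nabla_{X_\fol}(\alpha Y_\fol), Z_\gol\rangle = X_\fol\langle\alpha Y_\fol, Z_\gol\rangle - \langle \alpha Y_\fol, \nabla^{\mathrm{Bott}}_{X_\fol}Z_\gol\rangle$. Matching the two expressions shows $\alpha(\nabla_{X_\fol}Y_\fol) = \nabla_{X_\fol}(\alpha Y_\fol)$, i.e.\ \ref{thm:s2w-lagr-formula:alpha}; formula \ref{thm:s2w-lagr-formula:beta} is obtained by the identical argument with the roles of $\fol,\gol$ and $\alpha,\beta$ interchanged, using the duality $\alpha=-\beta^*$ for consistency.

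Having shown that any bi-Lagrangian connection \emph{must} satisfy \ref{thm:s2w-lagr-formula:bott-fol}--\ref{thm:s2w-lagr-formula:beta}, I would close the argument by checking that the operator defined by these four formulas is genuinely a connection (the Leibniz rule in the function argument follows since the partial Bott connections are connections and conjugation by the $C^\infty(M)$-linear isomorphisms $\alpha,\beta$ preserves the connection axioms) and that it satisfies all three properties \ref{def:s2w-lagr-conn:sympl}--\ref{def:s2w-lagr-conn:tor}; properties \ref{def:s2w-lagr-conn:fol} and \ref{def:s2w-lagr-conn:tor} are immediate from the structure of the formulas, and \ref{def:s2w-lagr-conn:sympl} is verified by decomposing arbitrary vectors into their $\fol$- and $\gol$-parts and reducing to the pairings computed above. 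I expect the main obstacle to be bookkeeping in the almost-symplectic verification: one must carefully use that both $T\fol$ and $T\gol$ are $\omega$-isotropic to kill the ``wrong'' cross terms, and keep track of the sign in $\alpha = -\beta^*$ so that the contributions from the pure and mixed parts assemble correctly to $(\nabla_v\omega)(\cdot,\cdot)=0$ for all $v$. None of the individual computations is hard, but the case analysis over the four combinations of $T\fol/T\gol$ arguments is where an error is most likely to creep in, so I would organize it as a single lemma computing $(\nabla_v\omega)(A,B)$ for $A,B$ ranging over pure summands and $v$ likewise.
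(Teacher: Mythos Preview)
Your proposal is correct and follows essentially the same route as the paper: projecting the mixed torsion identity onto $T\fol$ and $T\gol$ to get \ref{thm:s2w-lagr-formula:bott-fol}--\ref{thm:s2w-lagr-formula:bott-gol}, then expanding $\nabla_v\omega=0$ against a test vector in the complementary Lagrangian to force \ref{thm:s2w-lagr-formula:alpha}--\ref{thm:s2w-lagr-formula:beta}. The paper's proof is terser and only treats the necessity direction (any bi-Lagrangian connection must satisfy these formulas), deferring well-definedness and the torsion discussion to the paragraph following the proof; your added third step, checking that the formulas genuinely assemble into a connection satisfying all three axioms, is a welcome elaboration but not a different strategy.
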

\begin{proof}
  To obtain $(\ref{thm:s2w-lagr-formula:bott-fol})$ and
  $(\ref{thm:s2w-lagr-formula:bott-gol})$, apply the projections $\pi_\fol$ and
  $\pi_\gol$ to property $(\ref{def:s2w-lagr-conn:tor})$ of Definition
  \ref{def:s2w-lagr-conn}. For $(\ref{thm:s2w-lagr-formula:alpha})$ and
  $(\ref{thm:s2w-lagr-formula:beta})$, expand the left-hand side of the
  equality $\nabla_{v}\omega=0$. For $v=X_\fol$ and $w\in T\gol$ we
  obtain
  \[
    \begin{aligned}
      \alpha\,\nabla_{X_\fol}Y_\fol(w) &= \omega(\nabla_{X_\fol}Y_\fol,w) \\
        &= X_\fol\omega(Y_\fol,w) - \omega(Y_\fol,\nabla_{X_\fol}w) \\
        &= \nabla_{X_\fol}(\iota_{Y_\fol}\omega)(w) = \nabla_{X_\fol}\alpha
          Y_\fol(w),
    \end{aligned}
  \]
  proving $(\ref{thm:s2w-lagr-formula:alpha})$;
  $(\ref{thm:s2w-lagr-formula:beta})$ is proven identically.
\end{proof}

The connection $\nabla$ given in Proposition \ref{thm:s2w-lagr-formula} is
well-defined and unique. Its action on $TM/TL\simeq T\gol_{|L}$ along the
leaf $L$ of $\fol$ given by $(\ref{thm:s2w-lagr-formula:bott-fol})$ of
Proposition \ref{thm:s2w-lagr-formula} is exactly the (conjugate by the
projection $\pi_\gol$ of the) action of \emph{Bott's connection}
$\maps{D^\fol}{\Gamma(T\fol)\times\Gamma(TM/T\fol)}{TM/T\fol}$ restricted to
$L$ \cite{hess}. The connection $\nabla$ is torsionless \cite[Proposition
3.1]{vaisman}. This can be seen by considering $(\ref{def:s2w-lagr-conn:fol})$
of Definition \ref{def:s2w-lagr-conn} and the identity
\begin{equation}
  \label{eq:s2w-lagr-tor}
    d\omega(X,Y,Z) = \omega(T(X,Y),Z) + \omega(T(Y,Z),X) + \omega(T(Z,X),Y)
\end{equation}
involving the torsion tensor $T$ of $\nabla$, which is valid for any
$X,Y,Z\in\mathfrak{X}(M)$ and any connection $\nabla$ satisfying
$\nabla_v\,\omega=0$ for each $v\in TM$. The above remark about torsion,
Definition \ref{def:s2w-lagr-conn} and Proposition \ref{thm:s2w-lagr-formula}
characterize $\nabla$ as the unique symplectic connection in the sense of
\cite{fedosov} extending both Bott's connections associated with the foliations
of the web $\web_\omega$. Bott's connections are flat along the leaves of their
respective foliations; that the same holds for the bi-Lagrangian connection
$\nabla$ is a direct consequence of the following lemma.

\begin{lem}
  \label{thm:s2w-lagr-frames}
  Let $\web_\omega=(M,\omega,\fol,\gol)$ be a bi-Lagrangian manifold of
  dimension $2n$ and let $(x_1,\ldots,x_n,y_1,\ldots,y_n)$ be a local
  coordinate system on $M$ in which $T\fol = \bigcap_{i=1}^n \ker dy_i$ and
  $T\gol = \bigcap_{j=1}^n\ker dx_j$ for $i,j=1,\ldots,n$. Also, let $L$ be a
  leaf of $\fol$ and $K$ be a leaf of $\gol$. Then
  \begin{enumerate}[label=$(\alph{enumi})$, ref=\alph{enumi}]
    \item\label{thm:s2w-lagr-frames:f}
      the vector fields $(X_{y_i}, \basis{y_j})_{i,j=1}^n$
      form a local $\nabla$-parallel frame along $L$,
    \item\label{thm:s2w-lagr-frames:g}
      the vector fields $(\basis{x_i}, X_{x_j})_{i,j=1}^n$
      form a local $\nabla$-parallel frame along $K$,
  \end{enumerate}
  where $X_f$ denotes the Hamiltonian vector field corresponding to a smooth
  function $f\in C^\infty(M)$ defined by $\omega(X_f,\cdot)=df$.
\end{lem}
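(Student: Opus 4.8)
The plan is to verify directly that the proposed frames are parallel by using the characterization of $\nabla$ given in Proposition~\ref{thm:s2w-lagr-formula}, exploiting the fact that all the vector fields in question have very explicit brackets. First I would observe that along the leaf $L$ of $\fol$ the vector fields $\basis{y_j}$ span $T\gol_{|L}$ and the Hamiltonian fields $X_{y_i}$ span $T\fol_{|L}$: indeed, since $T\gol = \bigcap_k \ker dx_k$, any vector in $T\gol$ pairs trivially with all $dx_k$, so $\iota_{X_{y_i}}\omega = dy_i$ forces $X_{y_i} \in T\fol$ (as $T\fol$ is Lagrangian and $dy_i$ vanishes on $T\fol$ — wait, rather one uses that $T\fol$ is the $\omega$-orthogonal of $T\fol$ and $dy_i|_{T\fol}=0$ means... ); more carefully, $\omega(X_{y_i},Z)=dy_i(Z)=0$ for every $Z\in T\gol$ because $dy_i$ annihilates $T\gol$, and since $T\gol$ is Lagrangian, $X_{y_i}\in (T\gol)^{\perp_\omega}=T\gol$ — no: the $\omega$-orthogonal of a Lagrangian is itself, so this gives $X_{y_i}\in T\gol$, which is wrong. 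The correct statement: $dy_i$ annihilates $T\fol=\bigcap\ker dy_k$, hence $\omega(X_{y_i},W)=0$ for all $W\in T\fol$, so $X_{y_i}\in(T\fol)^{\perp_\omega}=T\fol$. Good, so $X_{y_i}\in\Gamma(T\fol)$ and likewise $X_{x_j}\in\Gamma(T\gol)$.

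Next I would check the four defining relations of Proposition~\ref{thm:s2w-lagr-formula} on the frame along $L$. For $(\ref{thm:s2w-lagr-formula:bott-fol})$ we need $\nabla_{X_{y_i}}\basis{y_j}=\pi_\gol[X_{y_i},\basis{y_j}]$ to vanish along $L$; since $X_{y_i}$ has coefficients that are derivatives of the (coordinate-expressed) symplectic form and $\basis{y_j}$ is a coordinate field, the bracket $[X_{y_i},\basis{y_j}]=-\partial_{y_j}(X_{y_i})$ need not vanish everywhere, but its $T\gol$-component restricted to $L$ is what matters — the key point is that along $L$ the frame should be parallel in the directions tangent to $L$, i.e. only $\nabla_{X_{y_i}}$ (the $\fol$-directions) must annihilate the frame, whereas the claim is a statement about the frame being $\nabla$-parallel along $L$, meaning $\nabla_v(\text{frame})=0$ for $v\in TL=T\fol_{|L}$. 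So only parts $(\ref{thm:s2w-lagr-formula:bott-fol})$ and $(\ref{thm:s2w-lagr-formula:alpha})$ are relevant. For $(\ref{thm:s2w-lagr-formula:alpha})$ I would compute $\alpha X_{y_j}=\iota_{X_{y_j}}\omega=dy_j$, a closed form with constant coefficients in these coordinates, and then show $\nabla_{X_{y_i}}dy_j=0$ along $L$: using the formula from the proof of Proposition~\ref{thm:s2w-lagr-formula}, $\nabla_{X_{y_i}}(dy_j)(w)=X_{y_i}(dy_j(w))-dy_j(\nabla_{X_{y_i}}w)$ for $w\in T\gol$; choosing $w=\partial_{y_k}$ and using part $(\ref{thm:s2w-lagr-formula:bott-fol})$ to express $\nabla_{X_{y_i}}\partial_{y_k}$ as a $T\gol$-valued expression, one sees $dy_j$ paired with anything in $T\gol$ is constant (equal to $\delta$ or $0$), so both terms manifestly vanish. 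The analogous computation handles $(\ref{thm:s2w-lagr-formula:bott-fol})$ for the $\basis{y_j}$ part of the frame by a direct bracket computation showing the $\gol$-projection vanishes along $L$ — here one uses that $[X_{y_i},\partial_{y_j}]$ is $\omega$-dual to $d(\text{something})$ combined with the Jacobi identity, or more simply that $\omega([X_{y_i},\partial_{y_j}],\partial_{y_k})$ is a derivative of a matrix entry of $A$ that one shows is balanced. Part (b) is proved identically with the roles of $x$ and $y$ and of $\fol$ and $\gol$ exchanged, using $(\ref{thm:s2w-lagr-formula:bott-gol})$ and $(\ref{thm:s2w-lagr-formula:beta})$.

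The main obstacle I anticipate is the bracket computation for the $\basis{y_j}$ part of the frame: showing $\pi_\gol[X_{y_i},\partial_{y_j}] = 0$ along $L$ is not completely automatic because $X_{y_i}$ genuinely has non-constant coefficients depending on the matrix $A$ of $(\ref{eq:s2w-lagr-ricci})$. The clean way around this is to avoid coordinates for this step and instead argue structurally: $[X_{y_i},\partial_{y_j}]$ lies in $\Gamma(T\fol)$ whenever restricted appropriately, because both $X_{y_i}\in\Gamma(T\fol)$ and $\partial_{y_j}\in\Gamma(T\gol)$, and one invokes that Bott's connection $D^\fol$ on $TM/T\fol$ is flat along the leaves of $\fol$ — which is the classical fact that $[\Gamma(T\fol),\Gamma(T\fol)]\subseteq\Gamma(T\fol)$ (integrability) implies the Bott connection has zero curvature along leaves — and then transport this via the identification $T\gol\simeq TM/T\fol$ and the projection $\pi_\gol$. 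Thus the lemma is essentially a restatement, in the chosen coordinates, of flatness of the Bott connections along leaves, together with the verification that $\alpha$ intertwines $\nabla$ with a trivially-parallel structure because $\alpha(X_{y_j})=dy_j$ has constant coefficients. I would present the coordinate computation as the primary proof for concreteness and mention the structural interpretation as the reason it works.
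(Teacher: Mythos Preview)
Your approach is essentially the paper's: invoke Proposition~\ref{thm:s2w-lagr-formula} and the identity $\alpha X_{y_j}=dy_j$ to verify parallelism in the $T\fol$-directions. The one place you go astray is in flagging $\pi_\gol[X_{y_i},\partial_{y_j}]=0$ as ``the main obstacle''. It is in fact immediate and holds everywhere, not just along $L$: since $X_{y_i}\in\Gamma(T\fol)$, in the adapted coordinates one has $X_{y_i}=\sum_k c_k(x,y)\,\partial_{x_k}$, so
\[
[X_{y_i},\partial_{y_j}]=-\sum_k(\partial_{y_j}c_k)\,\partial_{x_k}\in\Gamma(T\fol),
\]
and $\pi_\gol$ annihilates it identically. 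No appeal to Bott-connection flatness or to balancing of matrix entries of $A$ is needed; the paper dispatches this in one line (``$\nabla_Y\basis{y_i}=0$ by $(\ref{thm:s2w-lagr-formula:bott-fol})$''). For the Hamiltonian part the paper proves the slightly more general identity $\nabla_Y\,dH=0$ for any $Y\in\Gamma(T\fol)$ and any $H$ constant on leaves of $\fol$ (equation~\eqref{eq:s2w-lagr-leafwise-const}), then specializes to $H=y_i$; your direct evaluation of $(\nabla_{X_{y_i}}dy_j)(\partial_{y_k})$ is equivalent and works once the trivial bracket fact above is in hand.
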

\begin{proof}
  Consider $(\ref{thm:s2w-lagr-frames:f})$ without loss of generality and fix
  $Y\in\Gamma(T\fol)$. Note that $\nabla_{Y}\basis{y_i}=0$ by
  $(\ref{thm:s2w-lagr-formula:bott-fol})$ of Proposition
  \ref{thm:s2w-lagr-formula} for each given $i=1,2,\ldots,n$. Now, using the
  fact that $\nabla_YX\in\Gamma(T\fol)$ for every $X\in\Gamma(T\fol)$ by
  Definition \ref{def:s2w-lagr-conn}, we obtain that for each smooth function
  $H$ which is constant on leaves of $\fol$ the equality
  \begin{equation}
    \label{eq:s2w-lagr-leafwise-const}
    \begin{aligned}
      (\nabla_Y\,dH)(X)
        &= Y\,\big(dH(X)\big) - dH(\nabla_Y X)
         = Y\,\big(dH(X_\gol)\big) - dH(\nabla_YX_\fol) - dH([Y,X_\gol]_\gol) \\
        &= Y\,\big(dH(X_\gol)) - dH([Y,X_\gol])
         = d(dH)(Y,X_\gol) + X_\gol(YH) = 0
    \end{aligned}
  \end{equation}
  holds, where for each vector field $V\in\mathfrak{X}(M)$ we denote by
  $V_\fol\in\Gamma(T\fol)$ and $V_\gol\in\Gamma(T\gol)$ the projections of $V$
  onto $T\fol$ and $T\gol$. The coordinate $y_i$ is such a function, hence the
  claim follows from $(\ref{thm:s2w-lagr-formula:alpha})$ of Proposition
  \ref{thm:s2w-lagr-formula}, where $dy_i = \alpha X_{y_i}$ by definition.
\end{proof}

The existence of $\nabla$-parallel frames along the leaves of $\web_\omega$
allows us to conclude that the Riemann curvature endomorphism $R(X,Y)$ of
$\nabla$ is zero for pairs of vector fields $X,Y\in\Gamma(T\fol)$ and
$X,Y\in\Gamma(T\gol)$. This fact is reflected in the expression for the
curvature of $\nabla$ in a local coordinate system
$(x_1,\ldots,x_n,y_1,\ldots,y_n)$ compatible with $\web_\omega$ given below. We
state it using Cartan's moving frame formalism (see e.g.
\cite{chernchen,foundg1}) and a certain natural decomposition of the exterior
derivative operator $d=d_x+d_y$ associated to $\web_\omega$. The operators
$d_x,d_y$ are exactly the skew-derivations on $\Omega^*(M)$ satisfying
$d_yd_x+d_xd_y=0$ and $d_x^2=d_y^2=0$, together with $d_x f(v) = v_\fol f$ and $d_y
f(v) = v_\gol f$ for any $f\in C^\infty(M)$, $v\in TM$ and $v_\fol\in T\fol,
v_\gol\in T\gol$ bound by the equality $v=v_\fol+v_\gol$ \cite{chainderham}.

\begin{prop}[\cite{vaisman}]
  \label{thm:s2w-lagr-conn-coords}
  Let $\web_\omega=(M,\omega,\fol,\gol)$ be a bi-Lagrangian manifold of
  dimension $2n$ with canonical connection $\nabla$ and let
  $(x_1,\ldots,x_n,y_1,\ldots,y_n)$ be a local coordinate system in which
  $T\fol=\bigcap_{i=1}^n\ker dy_i$ and $T\gol=\bigcap_{i=1}^n\ker dx_i$. Let
  $A$ be the $n\times n$ matrix with entries $A_{ij} =
  \omega(\basis{x_i},\basis{y_j})$. The matrix of connection $1$-forms $\gamma$
  of $\nabla$ with respect to the coordinate frame is
  \begin{equation}
    \gamma = \bm{ (d_xA\cdot A^{-1})^T & 0 \\ 0 & (A^{-1}\cdot d_yA)},
  \end{equation}
  while the matrix of curvature $2$-forms is
  \begin{equation}
    \label{eq:s2w-lagr-conn-coords-curv}
    \Omega = \bm{ d_y(d_xA\cdot A^{-1})^T & 0 \\ 0 & d_x(A^{-1}\cdot d_yA)}.
  \end{equation}
\end{prop}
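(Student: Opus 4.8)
The plan is to verify the formula for the connection $1$-forms $\gamma$ directly from Proposition \ref{thm:s2w-lagr-formula}, using the coordinate frame $(\partial_{x_1},\ldots,\partial_{x_n},\partial_{y_1},\ldots,\partial_{y_n})$, and then obtain the curvature $2$-forms from the structure equation $\Omega=d\gamma+\gamma\wedge\gamma$. The block-diagonal shape of $\gamma$ should come for free: properties \ref{def:s2w-lagr-conn:fol} of Definition \ref{def:s2w-lagr-conn} say that $\nabla_v$ preserves $T\fol=\operatorname{span}(\partial_{x_i})$ and $T\gol=\operatorname{span}(\partial_{y_j})$, so $\nabla\partial_{x_i}$ has no $\partial_{y_j}$-components and vice versa, killing the off-diagonal blocks of $\gamma$. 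For the diagonal blocks I would compute separately the $\fol$-directional and $\gol$-directional parts. By \ref{thm:s2w-lagr-formula:bott-gol} of Proposition \ref{thm:s2w-lagr-formula}, $\nabla_{\partial_{y_k}}\partial_{x_i}=\pi_\fol[\partial_{y_k},\partial_{x_i}]=0$ since coordinate vector fields commute; similarly $\nabla_{\partial_{x_k}}\partial_{y_j}=0$. So the $\fol\fol$-block of $\gamma$ involves only $d_x$ and the $\gol\gol$-block only $d_y$, matching the claimed formula.

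Next I would pin down the diagonal blocks using \ref{thm:s2w-lagr-formula:alpha}. We have $\alpha\partial_{x_i}=\iota_{\partial_{x_i}}\omega=\sum_{j}A_{ij}\,dy_j$ (using $\omega=\sum A_{ij}\,dx_i\wedge dy_j$, which is \eqref{eq:s2w-lagr-ricci} up to the stated identification), so $\nabla_{\partial_{x_k}}\partial_{x_i}=\alpha^{-1}\nabla_{\partial_{x_k}}(\sum_j A_{ij}\,dy_j)=\alpha^{-1}\sum_j(\partial_{x_k}A_{ij})\,dy_j$, where I use that $\nabla$ annihilates the frame $(X_{y_j},\partial_{y_j})$ along leaves of $\fol$ by Lemma \ref{thm:s2w-lagr-frames}, in particular $\nabla_{\partial_{x_k}}dy_j=0$ since $dy_j=\alpha X_{y_j}$ and $\partial_{x_k}\in\Gamma(T\fol)$. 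Writing $\alpha^{-1}\sum_j(\partial_{x_k}A_{ij})\,dy_j=\sum_l\big((\partial_{x_k}A)A^{-1}\big)_{il}\,\partial_{x_l}$ identifies the $\fol\fol$-block of $\gamma$, acting on the row index $i$, as the transpose of $d_xA\cdot A^{-1}$; the analogous computation with $\beta\partial_{y_j}=-\iota_{\partial_{y_j}}\omega=\sum_i A_{ij}\,dx_i$ and Lemma \ref{thm:s2w-lagr-frames}\ref{thm:s2w-lagr-frames:g} gives the $\gol\gol$-block $A^{-1}\cdot d_yA$ (here the index conventions for the Hamiltonian-vs-coordinate frame produce the untransposed form — keeping the two sign/transpose bookkeeping straight is the one genuinely delicate point).

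For the curvature, substitute the block-diagonal $\gamma$ into $\Omega=d\gamma+\gamma\wedge\gamma$ and use $d=d_x+d_y$. Since $\nabla$ is flat along the leaves of each foliation (an immediate corollary of Lemma \ref{thm:s2w-lagr-frames}), the "pure" contributions $d_x$ of the $\fol\fol$-block and $d_y$ of the $\gol\gol$-block, together with the matching wedge-square terms, must cancel; what survives is $d_y$ applied to $(d_xA\cdot A^{-1})^T$ in the first block and $d_x$ applied to $A^{-1}\cdot d_yA$ in the second, giving \eqref{eq:s2w-lagr-conn-coords-curv}. Rather than verifying this cancellation by brute force, I would argue it structurally: the $\fol\fol$-block of $\gamma$ is precisely the connection matrix of the flat Bott connection $D^\fol$ expressed in the frame dual to $(dy_j)$, whose curvature vanishes, so only the cross-term $d_y(\cdot)$ can contribute; formally $d_x(d_xA\cdot A^{-1})^T+\big((d_xA\cdot A^{-1})^T\big)\wedge\big((d_xA\cdot A^{-1})^T\big)=0$ is exactly the Maurer–Cartan-type identity for the $d_x$-differential, and likewise in the other block. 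The main obstacle, as noted, is purely notational: correctly tracking the transpose on the $\fol\fol$-block (which arises because the coordinate frame $\partial_{x_i}$ pairs with $dy_j$ through $A$, not $A^T$) and the asymmetry between blocks caused by the opposite orderings of the $\nabla$-parallel frames in Lemma \ref{thm:s2w-lagr-frames}\ref{thm:s2w-lagr-frames:f} versus \ref{thm:s2w-lagr-frames:g}.
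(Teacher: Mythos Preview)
Your proposal is correct and follows essentially the same route as the paper's proof: both obtain the block-diagonal shape from property~(\ref{def:s2w-lagr-conn:fol}), compute the diagonal blocks via the isomorphisms $\alpha,\beta$ together with Lemma~\ref{thm:s2w-lagr-frames}, and then apply the structure equation $\Omega=d\gamma+\gamma\wedge\gamma$. The only cosmetic difference is in the curvature step: where you invoke leafwise flatness of $\nabla$ (equivalently, the Maurer--Cartan identity for the Bott connection) to kill the pure $d_x$- and $d_y$-contributions, the paper simply verifies the identity $d_y(A^{-1}\cdot d_yA)=-(A^{-1}\cdot d_yA)\wedge(A^{-1}\cdot d_yA)$ by hand using $d_y^2=0$ and $d_y(A^{-1})=-A^{-1}\cdot d_yA\cdot A^{-1}$.
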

\begin{proof}
  The off-diagonal terms are zero since $\nabla$ preserves $T\fol$ and
  $T\gol$ by property $(\ref{def:s2w-lagr-conn:fol})$ of Definition
  \ref{def:s2w-lagr-conn}. To find the diagonal terms, use Lemma
  \ref{thm:s2w-lagr-frames} to differentiate $\basis{x_i}$ and $\basis{y_j}$
  along $X=X_\fol+X_\gol\in\mathfrak{X}(M)$ with $X_\fol\in\Gamma(T\fol)$,
  $X_\gol\in\Gamma(T\gol)$. This yields
  \begin{equation}
    \begin{aligned}
      &\nabla_X\basis{x_i}
        = \nabla_{X_\fol}\basis{x_i} + \nabla_{X_\gol}\basis{x_i}
        = \nabla_{X_\fol}\basis{x_i}
        = \alpha^{-1}\nabla_{X_\fol}\alpha\basis{x_i} \\
        &\hskip 1em = \smsum{j}\alpha^{-1}\nabla_{X_\fol}A_{ij}dy_j
        = \smsum{j}(\alpha^{-1}dy_j)(X_\fol A_{ij}) \\
        &\hskip 1em = \smsum{j,k}(\basis{x_k}A^{-1}_{jk})(X_\fol A_{ij})
        = \smsum{k}\basis{x_k}\,(d_xA(X)\cdot A^{-1})^T_{ki},
    \end{aligned}
  \end{equation}
  which proves that the upper-left block is $(d_xA(X)\cdot A^{-1})^T$. Bearing
  in mind that $\beta\basis{y_i} = \smsum{j}(-A_{ji})dx_j$, one similarly
  obtains the lower-right entries of $\gamma$. To determine the curvature
  forms, use Cartan's structure equation \cite[Chapter 2, \S 5]{foundg1} to
  arrive at
  \begin{equation}
    \Omega = d\gamma + \gamma\wedge\gamma
       = \bm{\Omega_\fol & 0 \\ 0 & \Omega_\gol},
  \end{equation}
  where
  \begin{equation}
    \begin{aligned}
      \Omega_\fol &= d(A^{-T}\cdot d_xA^T) + (A^{-T}\cdot
        d_xA^T)\wedge(A^{-T}\cdot d_xA^T) \text{ and}\\
      \Omega_\gol &= d(A^{-1}\cdot d_yA)
        + (A^{-1}\cdot d_yA)\wedge(A^{-1}\cdot d_yA).
    \end{aligned}
  \end{equation}
  Note that
  \begin{equation}
    \begin{aligned}
      &d_y(A^{-1}\cdot d_yA) = d_y(A^{-1})\wedge d_yA
        + A^{-1}\cdot \smash{\overbrace{d_y^2A}^{=0}} \\
        &\hskip 2em= (-A^{-1}\cdot d_yA\cdot A^{-1})\wedge d_yA \\
        &\hskip 2em= -(A^{-1}\cdot d_yA)\wedge(A^{-1}\cdot d_yA),
    \end{aligned}
  \end{equation}
  so $\Omega_\gol$ reduces to $d_x(A^{-1}\cdot d_yA)$. The other term is
  handled analogously.
\end{proof}

The two operators $d_x$, $d_y$ are part of a certain double complex
$\Omega^{\bullet,\bullet}(M, \web_\omega)$. The spaces
$\Omega^{p,q}(M,\web_\omega)$ consist of $(p+q)$-forms which annihilate every
wedge product of $k$ vectors tangent to $\fol$ and $l$ vectors tangent to
$\gol$ with $k+l=p+q$, $k\neq p$ and $l\neq q$. Since $T\fol$, $T\gol$ are
involutive, the derivations $d_x$, $d_y$ have degree $(1,0)$, $(0,1)$ in
$\Omega^{\bullet,\bullet}(M,\web_\omega)$ respectively \cite{chainderham}.
Moreover, the following variant of local Poincaré's lemma holds
\cite[$(15)$]{chainderham}.
Assume that $M$ is contractible along $T\gol$ to a leaf of $\fol$. If
$\alpha\in\Omega^{p+1,q}(M,\web_\omega)$ satisfies $d_x\alpha = 0$ for some
$p,q\in\mathbb{N}$, then there exist $\beta\in\Omega^{p,q}(M,\web_\omega)$ such
that $\alpha = d_x\beta$. Moreover, in the case when $M=U\times V$ with $TU =
T\fol$ and $TV = T\gol$, if $\alpha\in\Omega^{0,q}(M,\web_\omega)$ satisfies
$d_x\alpha = 0$, then $\alpha = \pi_V^*\beta$ for some $\beta\in\Omega^q(V)$,
where $\maps{\pi_V}{U\times V}{V}$ is a projection onto the second factor. An
analogous theorem is true for the other operator $d_y$.

These tools allow us to express flatness of bi-Lagrangian structures in terms
of the matrix $A$ of $\omega$ inside any local coordinate system adapted to
$\web_\omega$.

\begin{thm}
  \label{thm:s2w-lagr-fg}
  In the context of Proposition \ref{thm:s2w-lagr-conn-coords}, the
  bi-Lagrangian connection $\nabla$ of $\web_\omega$ is flat if and only if
  there exist matrix-valued function-germs $\maps{f,g}{(\Rb{n},0)}{M_{n\times
  n}(\Rb{1})}$ which satisfy
  \begin{equation}
    \label{eq:s2w-lagr-afg}
    A(x,y) = f(x)\cdot g(y).
  \end{equation}
\end{thm}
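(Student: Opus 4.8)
The plan is to read flatness off the curvature $2$-forms of Proposition \ref{thm:s2w-lagr-conn-coords} and then exploit the $d_x$- and $d_y$-Poincaré lemmas for the double complex $\Omega^{\bullet,\bullet}(M,\web_\omega)$. By $(\ref{eq:s2w-lagr-conn-coords-curv})$, the connection $\nabla$ is flat precisely when $d_y(d_xA\cdot A^{-1}) = 0$ and $d_x(A^{-1}\cdot d_yA) = 0$ (transposition is irrelevant for vanishing). Note that $A$ is invertible throughout, since $\omega$ is nondegenerate while $T\fol$, $T\gol$ are complementary Lagrangian subbundles; consequently, in any factorisation $A = fg$ both $f$ and $g$ are automatically invertible as well.

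The implication $(\ref{eq:s2w-lagr-afg})\Rightarrow\text{flatness}$ is immediate. If $A(x,y) = f(x)\cdot g(y)$, then, since $d_x$ differentiates only in the $x$-directions, $d_xA\cdot A^{-1} = (d_xf)\cdot g\cdot g^{-1}\cdot f^{-1} = (d_xf)\cdot f^{-1}$ is a matrix of $(1,0)$-forms with coefficients depending on $x$ alone, hence $d_y$-closed; symmetrically $A^{-1}\cdot d_yA = g^{-1}\cdot d_yg$ is $d_x$-closed. Both curvature blocks in $(\ref{eq:s2w-lagr-conn-coords-curv})$ therefore vanish.

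For the converse I would work inside a sufficiently small web-adapted product neighbourhood $M = U\times V$ with $TU = T\fol$ and $TV = T\gol$ (legitimate for germs, since $T\fol$, $T\gol$ are complementary involutive distributions), chosen contractible along each of $T\fol$, $T\gol$ to a leaf of the other foliation. Set $\theta := d_xA\cdot A^{-1}$, a matrix whose entries lie in $\Omega^{1,0}(M,\web_\omega)$. Flatness gives $d_y\theta = 0$, so the $d_y$-analogue of the Poincaré lemma recalled before the theorem yields $\theta = \pi_U^*\bar\theta$ for some $M_{n\times n}(\Rb{1})$-valued $1$-form $\bar\theta$ on $U$; in coordinates $\theta$ is a function of $x$ only. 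The crucial point is that $\theta$ obeys the Maurer--Cartan-type identity $d_x\theta = \theta\wedge\theta$: from $d_x^2A = 0$ and $d_x(A^{-1}) = -A^{-1}(d_xA)A^{-1}$ one gets $d_x\theta = -\,d_xA\wedge d_x(A^{-1}) = (d_xA\cdot A^{-1})\wedge(d_xA\cdot A^{-1})$. This says exactly that the $\mathfrak{gl}(n)$-connection $d-\theta$ on the trivial bundle over $U$ is flat, so on the germ $(U,0)$ there is an invertible matrix-valued germ $f = f(x)$ with $d_xf = \theta f$ and $f(0) = \mathrm{Id}$. Finally, putting $g := f^{-1}A$ and using $d_x(f^{-1}) = -f^{-1}\theta$ together with $\theta A = d_xA$, one finds $d_xg = f^{-1}(d_xA - \theta A) = 0$, so the $d_x$-Poincaré lemma forces $g = \pi_V^*\tilde g$, i.e. $g = g(y)$; hence $A = fg$, as desired. (In fact only the first curvature equation was used; by the easy implication it already entails the second, so each one separately is equivalent to flatness.)

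I expect the single genuine obstacle to be the integration step: recognising the overdetermined linear system $d_xf = \theta f$ as a flat connection and invoking the Frobenius theorem --- equivalently, checking the compatibility conditions $\partial_i\Theta_j - \partial_j\Theta_i = [\Theta_i,\Theta_j]$ for $\theta = \sum_i\Theta_i(x)\,dx_i$, which is precisely the identity $d_x\theta = \theta\wedge\theta$ established above. Everything else is routine manipulation of matrix-valued forms within the double complex $\Omega^{\bullet,\bullet}(M,\web_\omega)$.
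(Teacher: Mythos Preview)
Your argument is correct and follows essentially the same route as the paper: use $(\ref{eq:s2w-lagr-conn-coords-curv})$ and the $d_y$-Poincar\'e lemma to make $\theta=d_xA\cdot A^{-1}$ depend on $x$ alone, integrate to an invertible $f(x)$, and then observe that $f^{-1}A$ is $d_x$-closed, hence $g(y)$. The only minor difference is that the paper bypasses your Maurer--Cartan/Frobenius step by simply exhibiting $A(x,0)$ as an invertible solution of $d_xf=\theta f$, which is slightly more economical; your closing remark that one curvature block suffices is a nice extra observation.
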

\begin{proof}
  Assume that $\nabla$ is flat. Then, according to
  $(\ref{eq:s2w-lagr-conn-coords-curv})$, we have $d_y(d_xA\cdot A^{-1})=0$. By
  a variant of Poincaré's lemma outlined in the remark above, $d_xA\cdot A^{-1}
  = \beta(x)$ for some matrix-valued $1$-form-germ $\beta$ with
  $\beta_{ij}\in\Omega^{1,0}(M,\web_\omega)$.

  Consider a system of differential equations $d_xg = \beta(x)g$ on $\Rb{n}$ for an
  $n\times n$ matrix $g$. This system has at least one invertible solution,
  say, $A(x,0)$. Let $f(x)$ be one of them. The equality $d_x A\cdot A^{-1} =
  d_x f(x)\cdot f(x)^{-1}$ implies
  \begin{equation}
    \begin{aligned}
      d_x (f(x)^{-1}\cdot A) &= (-f(x)^{-1} d_xf(x)f(x)^{-1})\cdot A +
        f(x)^{-1}\cdot d_x A \\
      &= -f(x)^{-1}\cdot d_x A \cdot (A^{-1} A) + f(x)^{-1}\cdot d_xA = 0.
    \end{aligned}
  \end{equation}
  Hence, again by Poincaré's lemma, there exists a matrix-valued function-germ
  $g(y)$ such that $f(x)^{-1}\cdot A = g(y)$. This is equivalent to
  $(\ref{eq:s2w-lagr-afg})$.

  If $(\ref{eq:s2w-lagr-afg})$ holds, then a straightforward calculation of
  $(\ref{eq:s2w-lagr-conn-coords-curv})$ in coordinates proves that the
  curvature of $\nabla$ vanishes identically.
\end{proof}

\begin{exmp}
  \label{ex:s2w-rays}
  Let $S^{n-1}\subseteq\Rb{n}$ be a unit sphere centered at the origin and let
  $\omega$ be a standard symplectic form on its cotangent bundle. We will
  interpret this $2$-form as a restriction of the ambient symplectic form
  $\omega = d(\sum_{i=1}^n q_i\,dp_i)=\sum_{i=1}^n\,dq_i\wedge dp_i$ on
  $T^*\Rb{n}$ to the submanifold
  \begin{equation}
    T^*S^{n-1} = \smset{(p,q)\in T^*\Rb{n} : \smsum{i} q_ip_i = 0\land\smsum{i}
    p_i^2 = 1}
  \end{equation}
  induced by the embedding of $S^{n-1}$ into $\Rb{n}$. Note that we adopt the
  notational convention of geometric optics as in \cite[Chapter 3,
  1.5]{arnoldsympl}, where the usual meanings of variables $p$, $q$ is
  interchanged: the momentum vector $p$ is an element of $S^{n-1}$ which marks
  the oriented direction of a~ray in $\Rb{n}$, while $q\in T^*_pS^{n-1}$ is the
  covector dual to the perpendicular displacement $v\in T_pS^{n-1}$ of the ray
  with respect to the standard metric $\langle v,w\rangle = \smsum{i}v_iw_i$ on
  $\Rb{n}$. In this manner we construct a bijection between the pairs $(p,q)\in
  T^*S^{n-1}$ and oriented affine lines in $\Rb{n}$. (While it is equally
  possible to use the more direct identification of the space of oriented
  affine lines with the tangent bundle $TS^{n-1}$, the natural symplectic
  structure of the cotangent bundle makes $T^*S^{n-1}$ a more natural
  setting for our discussion.)

  All rays $(p,q)$ that cross a given point $x\in\Rb{n}$ comprise a Lagrangian
  submanifold
  \begin{equation}
    L_x=\smset{(p,q)\in T^*S^{n-1} : q = x - \langle p, x\rangle p}
  \end{equation}
  of $(T^*S^{n-1},\omega)$, since on $L_x$ we have
  \begin{equation}
    \begin{aligned}
      \omega = \smsum{i}dq_i\wedge dp_i
      &= - d\langle p,x \rangle\wedge (\smsum{i} p_i\,dp_i)
        - \langle p,x \rangle(\smsum{i}dp_i\wedge dp_i) \\
      &= - d\langle p,x \rangle\wedge d(\smfrac{1}{2}\abs{p}^2) = 0.
    \end{aligned}
  \end{equation}
  In particular, any hypersurface $H\subseteq\Rb{n}$ which intersects a fixed
  ray $(p_0,q_0)\in T^*S^{n-1}$ transversely at a single point $x\in\Rb{n}$
  determines a Lagrangian foliation $\gol$ of an open neighbourhood $U$ of
  $(p_0,q_0)\in T^*S^{n-1}$. Given two such disjoint hypersurfaces $H,K$,
  we obtain an interesting example of a bi-Lagrangian structure
  $(U,\omega,\fol,\gol)$ provided by Tabachnikov in \cite[274]{2-webs}. Foliations
  $\fol,\gol$ indeed form a $2$-web, since any two points $x\in H$ and $y\in K$
  determine uniquely a~single ray $(p,q)\in T^*S^{n-1}$ oriented from $y$ to
  $x$, where
  \begin{equation}
    p = \frac{x-y}{\abs{x-y}}\qquad\text{and}\qquad
    q = x - \langle x-y, x\rangle\,\frac{x-y}{\abs{x-y}^2}.
  \end{equation}
  We can prove by a direct calculation that a vector
  $v=\smsum{i}v_{x,i}\basis{x_i} +\smsum{j}v_{y,j}\basis{y_j}$ satisfies
  $dp(v)=dq(v)=0$ if and only if it spanned by the tangent vector fields
  $(\smsum{i}(x_i-y_i)\basis{x_i}, \smsum{j}(x_j-y_j)\basis{y_j})$. Hence, when
  a ray $(p_0,q_0)$ is transverse to both hypersurfaces, any nonsingular smooth
  parametrization $(t,s)\mapsto (x(s),y(t))$ by parameters $s,t\in\Rb{n-1}$ of
  points $x\in H$, $y\in K$ lying on the hypersurfaces yields a local
  coordinate system satisfying $T\fol=\bigcap_{i=1}^{n-1}\ker dt_i$ and
  $T\gol=\bigcap_{j=1}^{n-1}\ker ds_j$ for the corresponding Lagrangian
  foliations $\fol,\gol$. With its help we can obtain an expression for the
  curvature of the bi-Lagrangian connection $\nabla$ associated to this
  structure.

  The above parametrization allows us to identify an open neighbourhood of the
  ray $(p,q)\in T^*S^{n-1}$ with $H\times K$. The symplectic form $\omega$ on
  $H\times K\subseteq T^*S^{n-1}$ in the above coordinates becomes
  \begin{equation}
    \label{eq:s2w-lagr-rays-omega-intermediate}
    \begin{aligned}
      \omega &= \smsum{i}dq_i\wedge dp_i \\
        &= \smsum{i} dx_i\wedge dp_i - d(\langle p,x\rangle)\wedge (\smsum{i}
          p_idp_i) - \langle p,x\rangle (\smsum{i} dp_i\wedge dp_i) \\
        &= \smsum{i} dx_i\wedge dp_i = \smsum{i}dx_i\wedge
          d\big(\smfrac{x_i-y_i}{\abs{x-y}}\big) \\
        &= -(\smsum{i} \smfrac{1}{\abs{x-y}} dx_i\wedge dy_i)
          -\smsum{i}dx_i\wedge\big((x_i-y_i)(\smsum{j}\smfrac{(x_j-y_j)}{\abs{x-y}^{3}}(dx_j-dy_j)\big),
    \end{aligned}
  \end{equation}
  which yields
  \begin{equation}
    \label{eq:s2w-lagr-rays-omega}
     \omega = \smfrac{1}{\abs{x-y}^{3}}\big(\smsum{i}(x_i-y_i)dx_i\wedge
      \smsum{j}(x_j-y_j)dy_j\big) -
      \smfrac{1}{\abs{x-y}}\smsum{i}dx_i\wedge dy_i,
  \end{equation}
  where $x=x(s)$, $y=y(t)$ are smooth functions in parameters
  $s=(s_1,s_2,\ldots,s_{n-1})\in\Rb{n-1}$ and
  $t=(t_1,t_2,\ldots,t_{n-1})\in\Rb{n-1}$. Using these coordinates one obtains
  the matrix $A$ of symplectic form $\omega$ with entries
  $A_{ij}=\omega(\basis{s_i},\basis{t_j})$ for $i,j=1,2,\ldots,n-1$, which
  allows us to compute the matrix of curvature $2$-forms of $\nabla$ by means
  of Proposition $\ref{thm:s2w-lagr-conn-coords}$. Denote the Jacobi matrices
  of $x(s),y(t)$ by $\pd{x}{s},\pd{y}{t}\in M_{n\times(n-1)}(\Rb{1})$. By
  treating $(x-y)$ as column vectors, equality $(\ref{eq:s2w-lagr-rays-omega})$
  reduces to
  \begin{equation}
    \label{eq:s2w-lagr-ex-A}
    A = (\smpd{x}{s})^T\Big(\smfrac{1}{\abs{x-y}}\big(
            \smfrac{(x-y)}{\abs{x-y}}\smfrac{(x-y)}{\abs{x-y}}^T - I
                \big)\Big)\smpd{y}{t}.
  \end{equation}
  To find the curvature it is necessary to invert $A$. This task becomes quite
  difficult in full generality. Below we consider only the case $n=2$ for
  clarity.

  For $n=2$, the mappings $x(s),y(t)$ are smooth curves with derivatives
  $x'(s), y'(t)$ respectively, while the symplectic form $\omega$ becomes
  $f(s,t)\,ds\wedge dt$ in coordinates for some smooth function $f\in
  C^\infty(\Rb{2})$. From equality $(\ref{eq:s2w-lagr-ex-A})$ we deduce
  \begin{equation}
    \begin{aligned}
      \omega &= \smfrac{1}{\abs{x-y}^3}\big(\langle x-y,x'\rangle\langle
        x-y,y'\rangle-\langle x-y,x-y\rangle\langle x',y'\rangle\big)\ ds\wedge
        dt\\
        &= \smfrac{1}{\abs{x-y}^3}\big(
          (\basis{s}\smfrac{1}{2}\abs{x-y}^2)
          (-\basis{t}\smfrac{1}{2}\abs{x-y}^2)
          -
          \abs{x-y}^2
          (-\smfrac{\partial^2}{\partial s\partial t}\smfrac{1}{2}\abs{x-y}^2)
        \big)\ ds\wedge dt \\
        &= \big(\smfrac{1}{4\abs{x-y}^3}(\smpdd{}{s}{t}\log(\abs{x-y}^2)) +
        \smfrac{1}{4\abs{x-y}}(\smpdd{}{s}{t}\abs{x-y}^2)\big)\ ds\wedge dt.
    \end{aligned}
  \end{equation}
  We arrive at the only non-zero curvature coefficient $\xi$ of $\nabla$ by
  taking the mixed second logarithmic partial derivative of the above
  coefficient with respect to $s,t$. The result for $n=2$, obtained with the
  help of a~computer algebra system (\texttt{Wolfram Mathematica 13}
  \cite{math}), is
  \begin{equation}
    \label{eq:s2w-rays-curv}
    \begin{aligned}
      \xi &= -\frac{6\langle x-y,x'\rangle\langle x-y,y'\rangle}{\abs{x-y}^4}
          +\frac{3\langle x',y'\rangle}{\abs{x-y}^2}
          +\frac{\det(x',y')\det(x'',x-y)}{\det(x',x-y)^2} \\
        & -\frac{\det(x'',y')}{\det(x',x-y)}
          -\frac{\det(y',x')\det(y'',x-y)}{\det(y',x-y)^2}
          +\frac{\det(y'',x')}{\det(y',x-y)},
    \end{aligned}
  \end{equation}
  where $\det(v,w)$ for $v,w\in\Rb{2}$ denotes the determinant of a square
  matrix formed by concatenating the two column vectors $v,w$.

  For further reference, we also compute the volume form $\omega^{n-1}$ in
  coordinates $(s_1,\ldots,s_{n-1},$ $t_1,\ldots,t_{n-1})$ in the general case.
  Given an arbitrary matrix $A=[a_{ij}]_{i,j=1,\ldots,n}$ and a $2$-form
  $\hat\omega = \sum_{i,j=1}^n a_{ij}\,dx_i\wedge dy_j$ on $\Rb{2n}$, one can
  readily verify that
  \begin{equation}
    \hat\omega^{n-1} = (-1)^{(n-1)(n-2)/2}(n-1)! \sum_{i,j=1}^n \det
    A_{i,j}\,d\hat{x}_i\wedge d\hat{y}_j,
  \end{equation}
  for $d\hat{x}_i=dx_1\wedge\cdots\wedge dx_{i-1}\wedge
  dx_{i+1}\wedge\cdots\wedge dx_{n}$ and $d\hat{y}_j=dy_1\wedge\cdots\wedge
  dy_{j-1}\wedge dy_{j+1}\wedge\cdots\wedge dy_{n}$, where $A_{i,j}$ denotes
  the matrix $A$ with $i^\text{th}$ row and $j^\text{th}$ column discarded.
  Assume now that the matrix $A$ has the form $c(uv^T - I)$ for some column vectors
  $u,v\in\Rb{n}$ and $c\in\Rb{1}$, as in our case with $c=\frac1{\abs{x-y}}$
  and $u=v=\frac{x-y}{\abs{x-y}}$. The expressions $\det A_{i,i}$ for
  $i=1,2,\ldots,n$ is easily computed using the characteristic polynomial
  of a rank $1$ matrix $\tilde{u}_i\tilde{v}_i^T$, where $\tilde{u}_i$ and
  $\tilde{v}_i$ are the column vectors $u,v$ with $i^\text{th}$ coordinates
  removed. Since the kernel of $\tilde{u}_i\tilde{v}_i^T$ has dimension $n-2$,
  it has eigenvalue $0$ with geometric multiplicity $n-2$, hence its
  characteristic polynomial $\chi(\lambda)=\det(\lambda
  I-\tilde{u}_i\tilde{v}_i^T)$ is divisible by $\lambda^{n-2}$. Now use the
  fact that the trace of a matrix is the sum of its eigenvalues to arrive at
  \begin{equation}
    \chi(\lambda) = \lambda^{n-2}(\lambda-\operatorname{tr}(\tilde{u}_i\tilde{v}_i^T))
      = \lambda^{n-2}(\lambda-\smsum{k\neq i}u_kv_k).
  \end{equation}
  Since $\det A_{i,i}$ is exactly $(-c)^{n-1}\chi(1)$, we obtain that
  \begin{equation}
    \det A_{i,i} = (-1)^{n-1}c^{n-1}(u_iv_i - (\langle u, v\rangle - 1)).
  \end{equation}
  To compute $\det A_{i,j}$ for $i\neq j$, assume without loss of generality
  that $i<j$. In this case the $i^\text{th}$ column of $A_{i,j}$ is exactly
  $cv_i\tilde{u}_i\in\Rb{n-1}$. By the multilinearity and skew-symmetry of
  $\det A_{i,j}$ with respect to the columns of its argument we obtain that
  $\det A_{i,j} = c^{n-1}v_i\det B_{i,j}$ for
  \begin{equation}
    B_{i,j}=\begin{bmatrix}
      -1& \cdots & 0 & u_1 & 0 &\cdots &0 \\
      \vdots & \ddots & \vdots & \vdots & \vdots & \ddots & \vdots \\
      0& \cdots & -1 & u_{i-1} & 0 &\cdots &0 \\
      0& \cdots & 0 & u_{i+1} & -1 &\cdots &0 \\
      \vdots & \ddots & \vdots & \vdots & \vdots & \ddots & \vdots \\
      0& \cdots & 0 & u_{j} & 0 &\cdots &0 \\
      \vdots & \ddots & \vdots & \vdots & \vdots & \ddots & \vdots \\
      0& \cdots & 0 & u_{n} & 0 &\cdots &-1 \\
    \end{bmatrix},
  \end{equation}
  using elementary column operations on matrices, where the $(j-1)^\text{th}$ row
  containing $u_{j}$ has only one non-zero entry. By applying the Laplace
  expansion to this row we arrive at
  \begin{equation}
    \det A_{i,j} = (-1)^{n-1+i+j}c^{n-1}u_jv_i.
  \end{equation}
  To summarize, for any $i,j=1,2,\ldots,n$ we have obtained
  \begin{equation}
    \det A_{i,j} = (-1)^{n-1}c^{n-1}\Big((-1)^{i+j}u_jv_i - \delta_{ij}(\langle u,
    v\rangle - 1)\Big),
  \end{equation}
  where $\delta_{ij}$ is equal to $1$ if $i=j$ and $0$ otherwise. In our
  case $c=\frac1{\abs{x-y}}$ and $u=v=\frac{x-y}{\abs{x-y}}$, hence the above
  expression reduces to
  \begin{equation}
    \det A_{i,j} = (-1)^{n-1+i+j}\frac{(x_i-y_i)(x_j-y_j)}{\abs{x-y}^{n+1}}.
  \end{equation}
  Inserting these coefficients into the expression for $\omega^{n-1}$ leads to
  \begin{equation}
    \begin{aligned}
      \omega^{n-1} &= (-1)^{n(n-1)/2}(n-1)!\,\sum_{i,j=1}^n (-1)^{i+j}
          \frac{(x_i-y_i)(x_j-y_j)}{\abs{x-y}^{n+1}}\ d\hat{x}_i\wedge
            d\hat{y}_j\\
        &= (-1)^{n(n-1)/2}(n-1)!\,\sum_{i,j=1}^n (-1)^{i+j}
          \det\big(\smfrac{\partial x}{\partial s}\big)_i
          \frac{(x_i-y_i)(x_j-y_j)}{\abs{x-y}^{n+1}}
          \det\big(\smfrac{\partial y}{\partial t}\big)_j \\
          &\hphantom{\hskip 2em\Big(\sum_{j=1}^n (-1)^{j+n-1}
              \det\big(\smfrac{\partial y}{\partial t}\big)_j
            (x_j-y_j)\Big)} \ ds_1\wedge\cdots\wedge ds_{n-1}
            \wedge dt_1\wedge\cdots\wedge dt_{n-1} \\
        &= (-1)^{n(n-1)/2}\frac{(n-1)!}{\abs{x-y}^{n+1}}\,
          \Big(\sum_{i=1}^n (-1)^{i+n-1}
            \det\big(\smfrac{\partial x}{\partial s}\big)_i
            (x_i-y_i)\Big) \\
          &\hskip 2em\cdot\Big(\sum_{j=1}^n (-1)^{j+n-1}
              \det\big(\smfrac{\partial y}{\partial t}\big)_j
            (x_j-y_j)\Big)
          \ ds_1\wedge\cdots\wedge ds_{n-1}
            \wedge dt_1\wedge\cdots\wedge dt_{n-1},
    \end{aligned}
  \end{equation}
  where $\big(\frac{\partial x}{\partial s}\big)_i, \big(\frac{\partial
  y}{\partial t}\big)_j$ denote the Jacobi matrices
  of $x(s), y(t)$ with $i^\text{th}, j^\text{th}$ row deleted respectively.
  Note that the two factors involving the determinants in the last two lines of
  the above equality are exactly the Laplace expansions of determinants
  $\det(\smpd{x}{s},x-y), \det(\smpd{y}{t},x-y)$ of the Jacobi matrices
  concatenated with the column vector $x(s)-y(t)$. Therefore, we can write
  \begin{equation}
    \label{eq:s2w-lagr-rays-vol}
      \omega^{n-1} = f(s,t)\,ds_1\wedge\cdots\wedge ds_{n-1}\wedge
      dt_1\wedge\cdots\wedge dt_{n-1}
  \end{equation}
  with
  \begin{equation}
      f(s,t) = (-1)^{n(n-1)/2}(n-1)! \
      \frac{\det(\smpd{x}{s},x-y)\det(\smpd{y}{t},x-y)}{\abs{x-y}^{n+1}}
  \end{equation}
  The above expression is valid for any parametrizations
  $(x_1(s),\ldots,x_n(s))$ and $(y_1(t),\ldots,y_n(t))$ of $H,K$. It is
  non-zero if $x-y$ is transverse to $H$ and $K$, which proves that
  $(U,\omega,\fol,\gol)$ is a (regular) bi-Lagrangian structure if and only if
  $H\cap K=\varnothing$ and $(p_0,q_0)$ intersects $H$ and $K$ transversely. By
  taking the natural logarithm of $f(s,t)$ and differentiating it with respect
  to $s_i$ and $t_j$ we obtain the Ricci tensor of the canonical connection
  $\nabla$.
\end{exmp}

\subsection{Bi-Lagrangian submanifolds}

Throughout this section, the symbol $\web_\omega$ will denote a fixed
bi-Lagrangian structure $(M,\omega,\fol,\gol)$ with bi-Lagrangian connection
$\nabla$. Our current goal is to describe smooth submanifolds $S\subseteq M$
which admit a bi-Lagrangian structure canonically induced from $M$. To simplify
notation, we will use the restriction symbol $E_{|S}$ to denote the pullback
bundle $\iota^*E$ of any given vector subbundle $E\hookrightarrow TM
\twoheadrightarrow M$ of $TM$ along the corresponding inclusion
$\iota:S\hookrightarrow M$.

\begin{defn}
  A submanifold $S\subseteq M$ is called \emph{a bi-Lagrangian submanifold of
  $\web_\omega$} if the restrictions $T\pol$, $T\qol$ of $T\fol$, $T\gol$ to
  $TS$ integrate to nonsingular foliations $\pol$, $\qol$ of $S$ and the
  quadruple $\web_{\omega|S}=(S,\omega_{|S},\pol,\qol)$ forms a Lagrangian
  $2$-web.
\end{defn}

We list a couple of elementary consequences of this definition. Since
$\omega_{|S}$ is nondegenerate on $S$, the dimension of $S$ has to be an even
number. The tangent bundle $TS$ decomposes as a direct sum
$T\pol\oplus T\qol$, where $\dim T\pol = \dim T\qol = \frac{1}{2}\dim S$ due to
$\pol$,$\qol$ being Lagrangian foliations.
The assumption that $S$ is a symplectic submanifold provides us with a direct
sum decomposition $TM_{|S}=TS\oplus TS^\omega$ into $TS$ and its
skew-orthogonal complement $TS^\omega$ consisting of vectors $v\in TM$ such
that $\omega(v,\cdot)_{|TS}=0$. It defines the canonical skew-orthogonal
projection $p_S:TM_{|S}\twoheadrightarrow TS$, where, given $v\in TM_{|S}$, the
vector $p_Sv$ can be characterized as the unique vector from $TS$ that
satisfies
\begin{equation}
  \label{eq:s2w-lagr-skewproj}
  \omega(v,w) = \omega(p_S\,v,w) \qquad\text{ for all $w\in TS$}.
\end{equation}
Note that vectors from $T\fol_{|S}$ ($T\gol_{|S}$) project down to
$T\pol$ ($T\qol$), since in this case $\omega(v,\cdot)$ vanishes on $T\pol$
($T\qol$). The images of these projections are connected by canonical
isomorphisms $T\pol\simeq T\qol^*$, $T\qol\simeq T\pol^*$ defined as in
$(\ref{eq:s2w-lagr-isos})$ by taking $v\mapsto \iota_v\omega$. If we denote
them by $\alpha_S$, $\beta_S$ respectively, then it is apparent that
$(\alpha v)_{|S}=\alpha_Sv$ and $(\beta w)_{|S}=\beta_Sw$ for $v\in T\pol$ and
$w\in T\qol$.

Let $\nabla^S$ be the bi-Lagrangian connection of the Lagrangian subweb $S$ of
$\web_\omega$. The relationship between $\nabla^S$ and $\nabla$ can be
clarified using pseudo-Riemannian techniques relying on a known
correspondence between bi-Lagrangian geometry and para-Kähler geometry
(see e.g. \cite{etayosantamaria,bilagrangian}).

Each bi-Lagrangian structure $(M,\omega,\fol,\gol)$ of dimension $2n$ carries a
canonical metric $g$ of signature $(n,n)$ obtained in the following way. The
underlying pair of foliations $\fol$, $\gol$ gives rise to an integrable
almost-product structure $J$ by taking $Jv_\fol=v_\fol$ and $Jv_\gol=-v_\gol$
for $v_\fol\in T\fol, v_\gol\in T\gol$. This almost-complex structure $J$ has
the additional property that its eigenvalues $\pm1$ occur with the same
multiplicity $n$; we call such integrable almost product structures
\emph{para-complex structures}. To define $g$, for each $v,w\in TM$ put
\begin{equation}
  \label{eq:s2w-lagr-metr}
  g(v,w) = \omega(Jv,w).
\end{equation}
It can be proved by a straightforward calculation that $\nabla J=0$ as a
consequence of property $(\ref{def:s2w-lagr-conn:fol})$ of Definition
\ref{def:s2w-lagr-conn}. This, together with $\nabla\omega=0$, yields $\nabla
g=0$. Since $\nabla$ is torsionless, the connection $\nabla$ coincides with the
Levi-Civita connection of $(M,g)$. All of the above properties allow us to
deduce that the triple $(M,g,J)$ forms a \emph{para-Kähler manifold}
\cite{paracomplex}: a structure consisting of a smooth manifold $M$ equipped
with a \emph{para-complex structure} $J$ and a \emph{neutral metric} $g$ with
Levi-Civita connection $\nabla$ satisfying $\nabla J=0$ and $g(Jv,Jw)=-g(v,w)$
for each $v,w\in TM$.

It is of note that the tangent projection $\maps{p_S}{TM_{|S}}{TS}$ onto a
bi-Lagrangian submanifold $S$ given by $(\ref{eq:s2w-lagr-skewproj})$ is equal to the
orthogonal projection of $TM$ onto $TS$ with respect to the induced metric $g$.
Indeed, for each $v\in TM_{|S}$ and $w\in TS$ the identity
\begin{equation}
  g(v,w) = -\omega(v,Jw) = -\omega(p_S\,v,Jw) = g(p_S\,v,w)
\end{equation}
holds by the symmetry of $g$. Since $\nabla$ is Levi-Civita and $p_S$ is
orthogonal, the classical theory translated into the bi-Lagrangian language
yields the following formula for the canonical connection $\nabla^S$ on $S$.

\begin{prop}
  \label{thm:s2w-lagr-conn-proj}
  Let $\nabla$ be the canonical connection of a bi-Lagrangian manifold
  $(M,\omega,\fol,\gol)$, and let $\nabla^S$ be the canonical connection of one
  of its bi-Lagrangian submanifolds $S$. Then
  \begin{equation}
    \pushQED{\qed}
    \nabla^S = p_S\circ\nabla.\qedhere
    \popQED
  \end{equation}
\end{prop}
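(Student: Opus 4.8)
The plan is to show that for $X, Y \in \mathfrak X(S)$, with $Y$ extended arbitrarily to a section of $TM$ near $S$, the covariant derivative $p_S(\nabla_X Y)$ is independent of the extension and defines the bi-Lagrangian connection $\nabla^S$ of the subweb $\web_{\omega|S}$; by the uniqueness clause in Definition \ref{def:s2w-lagr-conn} (equivalently Proposition \ref{thm:s2w-lagr-formula}), it suffices to verify the three defining properties. First I would invoke the para-Kähler picture set up above: $\nabla$ is the Levi-Civita connection of the neutral metric $g$ defined by $(\ref{eq:s2w-lagr-metr})$, and $p_S$ is the $g$-orthogonal projection onto $TS$. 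The classical Gauss formula for a nondegenerate submanifold of a pseudo-Riemannian manifold then guarantees that $\nabla^{S,\mathrm{LC}} := p_S\circ\nabla$ is precisely the Levi-Civita connection of $(S, g_{|S})$; in particular it is torsionless and metric for $g_{|S}$. It remains to identify $g_{|S}$ with the para-Kähler metric of the induced bi-Lagrangian structure and to check that $p_S\circ\nabla$ parallelizes the foliations $\pol,\qol$.

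The key steps, in order: (1) Note $g_{|S}(v,w) = \omega(Jv,w)$ for $v,w\in TS$, and that the para-complex structure $J_S$ of $\web_{\omega|S}$ — defined by $J_S = \mathrm{id}$ on $T\pol$ and $-\mathrm{id}$ on $T\qol$ — is just the restriction $J_{|TS}$, since $T\pol \subseteq T\fol$ and $T\qol\subseteq T\gol$. Hence $g_{|S}$ is exactly the canonical neutral metric attached to $\web_{\omega|S}$ by $(\ref{eq:s2w-lagr-metr})$, and its Levi-Civita connection is the bi-Lagrangian connection $\nabla^S$ by the same argument ($\nabla^S J_S = 0$, $\nabla^S\omega_{|S}=0$, torsionless) that identified $\nabla$ with the Levi-Civita connection of $g$ earlier in this subsection. (2) Conclude from the Gauss formula that $p_S\circ\nabla$ is the Levi-Civita connection of $g_{|S}$, hence equals $\nabla^S$. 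Alternatively, and more directly in line with the paper's development, one can skip the metric and verify the axioms: $p_S\circ\nabla$ is torsionless because for $X\in\Gamma(T\pol)$, $Y\in\Gamma(T\qol)$ one has $(\nabla_X Y - \nabla_Y X)|_S = [X,Y]$ and $[X,Y]\in\Gamma(TS)$ is fixed by $p_S$; it preserves $T\pol$ because $\nabla_v\Gamma(T\fol)\subseteq T\fol$ and $p_S(T\fol_{|S})\subseteq T\pol$ as observed after $(\ref{eq:s2w-lagr-skewproj})$, and symmetrically for $T\qol$; and it satisfies $\nabla_v\omega_{|S}=0$ because $p_S$ is the skew-orthogonal projection, so for $v\in TS$, $w_1,w_2\in\Gamma(TS)$, expanding $v\,\omega(w_1,w_2)$ and using $\omega(\nabla_v w_i, \cdot)_{|TS} = \omega(p_S\nabla_v w_i,\cdot)_{|TS}$ from $(\ref{eq:s2w-lagr-skewproj})$ gives $(p_S\nabla)_v\omega_{|S} = (\nabla_v\omega)_{|S}=0$.

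I expect the only genuine subtlety to be well-definedness: showing that $p_S(\nabla_X Y)$ for $X\in\Gamma(TS)$ does not depend on the chosen extension of $Y$ off $S$, and that the resulting operator is $C^\infty(S)$-linear in $X$ and a derivation in $Y$ so that it is honestly a connection on $S$. This is the standard induced-connection argument and follows because two extensions of $Y$ differ by a section vanishing on $S$, whose covariant derivative along $X\in TS$ lies in $TS^\perp_g = TS^\omega$ — more precisely, differs by a term killed by $p_S$ — but it must be stated carefully since $\nabla$ is only defined on $M$, not intrinsically on $S$. Everything else reduces to the bookkeeping already assembled in this subsection, in particular the compatibility relations $(\alpha v)_{|S} = \alpha_S v$, $(\beta w)_{|S} = \beta_S w$ and the fact that $p_S$ restricts to the identity on $TS$ and sends $T\fol_{|S}, T\gol_{|S}$ into $T\pol, T\qol$.
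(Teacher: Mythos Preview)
Your proposal is correct and follows essentially the same route as the paper: the proposition is stated there with an immediate \qed because the preceding paragraph already established that $\nabla$ is the Levi-Civita connection of $g$, that $p_S$ coincides with the $g$-orthogonal projection, and hence that the classical Gauss formula identifies $p_S\circ\nabla$ with the Levi-Civita connection of $g_{|S}$, which is the bi-Lagrangian connection of the induced structure. Your additional direct verification of the axioms (almost-symplectic, foliation-preserving, torsionless on mixed pairs) and the remark on well-definedness are sound supplements that the paper leaves implicit.
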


\medskip
We will now state some results regarding bi-Lagrangian submanifolds drawn from
the pseudo-Riemannian world by means of the above characterization of
$\nabla^S$. The most important one for our purposes is the bi-Lagrangian
analogue of the Gauss equation relating the curvature of a surface to the
curvature of its ambient space \cite{docarmo, foundg2, renteln}.

Its formulation in the bi-Lagrangian language relies on the notion of a
\emph{symplectic curvature tensor} \cite{vaisman}. This covariant $4$-tensor
$Rs$ is defined in a familiar way using the Riemann curvature endomorphism
$R(u,v)w = \nabla_u\nabla_v w - \nabla_v\nabla_u w - \nabla_{[u,v]}w$, namely
\begin{equation}
  \label{eq:s2w-lagr-rs}
  Rs(X,Y,Z,W) = \omega(R(Z,W)Y,X).
\end{equation}
for each $X,Y,Z,W\in\mathfrak{X}(M)$. It exhibits several symmetries similar to
those underlying the classical Riemann curvature tensor \cite{vaisman} in
addition to some other symmetries involving the projections $X=X_\fol+X_\gol$,
where $X_\fol\in\Gamma(T\fol)$ and $X_\gol\in\Gamma(T\gol)$ for a fixed
$X\in\mathfrak{X}(M)$ \cite{hess},
\begin{enumerate}[label=$(\alph{enumi})$, ref=\alph{enumi}]
  \item\label{thm:s2w-lagr-symmetries:2form}
    $Rs(X,Y,Z,W) = -Rs(X,Y,W,Z)$,
    \hfill\emph{(antisymmetry of curvature $2$-forms)}
  \item\label{thm:s2w-lagr-symmetries:bianchi}
    $Rs(X,Y,Z,W) + Rs(X,Z,W,Y) + Rs(X,W,Y,Z) = 0$,
    \hfill\emph{(algebraic Bianchi identity)}
  \item\label{thm:s2w-lagr-symmetries:omega}
    $Rs(X,Y,Z,W) = Rs(Y,X,Z,W)$,
    \hfill\emph{($R(Z,W)$-invariance of $\omega$)}
  \item\label{thm:s2w-lagr-symmetries:flat}
    $Rs(X_\fol,Y_\fol,Z,W) = Rs(X_\gol,Y_\gol,Z,W) = 0$,
    \hfill\emph{(flatness along $T\fol$, $T\gol$)}
  \item\label{thm:s2w-lagr-symmetries:fols}
    $Rs(X,Y,Z_\fol,W_\fol) = Rs(X,Y,Z_\gol,W_\gol) = 0$.
    \hfill\emph{($\nabla$ preserves $T\fol$, $T\gol$)}
\end{enumerate}
The neutral metric $g$ arising out of the bi-Lagrangian structure $\web_\omega$
via $(\ref{eq:s2w-lagr-metr})$ gives rise to the standard Riemann curvature
tensor $Rm$, which is related to $Rs$ by the equality
\begin{equation}
  \label{eq:s2w-lagr-rsrm}
  Rs(X,Y,Z,W) = Rm(-JX,Y,Z,W),
\end{equation}
where $J$ is the almost-product structure coming from $\fol, \gol$. This
relationship, in conjunction with the classical Gauss equation, makes it
straightforward to prove the \emph{bi-Lagrangian Gauss equation} linking the
symplectic curvature tensor $Rs^S$ of a bi-Lagrangian submanifold $S$ with its
ambient counterpart $Rs$. If we denote the \emph{second fundamental form} of
$\nabla$ by
\begin{equation}
  \label{eq:s2w-lagr-II}
  \II(v,w) = \nabla_vw - \nabla^S_vw,
\end{equation}
the
equation says that
\begin{equation}
  \label{eq:s2w-lagr-gauss}
  \begin{aligned}
    &Rs(X,Y,Z,W) = Rs^S(X,Y,Z,W) \\
      &\hskip 4em + \omega(\II(X,Z),\II(Y,W)) - \omega(\II(X,W),\II(Y,Z))
  \end{aligned}
\end{equation}
for each $X,Y,Z,W\in\mathfrak{X}(S)$.

\subsection{Geometric flatness conditions}

The correspondence between bi-Lagrangian and para-Kähler geometry given by the
metric $g$ in $(\ref{eq:s2w-lagr-metr})$ suggests that we can extract all the
information about the curvature of the bi-Lagrangian manifold
$\web_\omega=(M,\omega,\fol,\gol)$ from the curvature of suitable immersed
$2$-dimensional subwebs. In the metric case, the relevant notion is that of
\emph{sectional curvature}. Here, we rely on a certain class of bi-Lagrangian
submanifolds locally spanned by a pair of geodesics with respect to the
bi-Lagrangian connection $\nabla$ to recover the curvature of $\web_\omega$. We
now give more details on these surfaces.

Locally, say, in a neighbourhood of a point
$p\in M$, we can express $M$ as a product of two leaves $F\in\fol$ and
$G\in\gol$ intersecting at $p$. Since an immersion of a subweb preserves the
corresponding foliations, the germ of immersion $\iota_S$ of a $2$-dimensional
bi-Lagrangian submanifold $S$ into $\web_\omega$ must be a product of curves
$\gamma_F\times\gamma_G$, where $\maps{\gamma_F}{(\Rb{1},0)}{F\in\fol}$ and
$\maps{\gamma_G}{(\Rb{1},0)}{G\in\gol}$, with
$\omega(\dot\gamma_F,\dot\gamma_G)\neq 0$ and $\gamma_F(0)=\gamma_G(0)=p$.

\begin{defn}
  \label{def:s2w-lagr-gen}
  Let $p\in M$, and let $F\in\fol$, $G\in\gol$ be the leaves of the
  bi-Lagrangian structure $\web_\omega=(M,\omega,\fol,\gol)$ crossing $p$.
  Given two smooth functions $H,K\in C^\infty(M)$, a $2$-dimensional
  bi-Lagrangian submanifold $S\subseteq M$ of $\web_\omega$ is said to be
  \emph{generated by Hamiltonians $H$, $K$ at $p$} if
  \begin{enumerate}[label=$(\alph{enumi})$, ref=\alph{enumi}]
    \item $dH_{|T\fol}=0$, $dK_{|T\gol}=0$,
    \item $\omega_p(X_H,X_K)\neq 0$,
    \item the leaves of $S$ crossing $p$ are the images of the
  integral curves $\maps{\gamma_F}{(\Rb{1},0)}{F}$ and
  $\maps{\gamma_G}{(\Rb{1},0)}{G}$ of the Hamiltonian vector fields
  $X_H,X_K\in\mathfrak{X}(M)$ corresponding to $H,K$.
  \end{enumerate}
  In this case we say that the bi-Lagrangian surface $S$ generated by $H,K$ at
  $p$ is \emph{spanned} by $\gamma_F$ and $\gamma_G$.
\end{defn}

The two curves $\gamma_F,\gamma_G$ are indeed $\nabla$-geodesics. This fact,
which follows from equality $(\ref{eq:s2w-lagr-leafwise-const})$ as
demonstrated in the proof of the next lemma, leads to the equality
between the only non-zero coefficient of the symplectic curvature tensor
$Rs^S_p$ of $\nabla^S$ at $p\in S$ and the corresponding coefficient of the
ambient curvature tensor $Rs_p$ defined in $(\ref{eq:s2w-lagr-rs})$.

\begin{lem}
  \label{thm:s2w-lagr-slicecore}
  If $\gamma_F$ is an integral curve of a Hamiltonian flow corresponding to
  a Hamiltonian $H$ such that $dH_{|T\fol}=0$, then
  \begin{equation}
    \label{eq:s2w-lagr-subweb-rs}
    Rs_p(\dot{\gamma}_G, \dot{\gamma}_F, \dot{\gamma}_F,
      \dot{\gamma}_G)
    = Rs^S_p(\dot{\gamma}_G, \dot{\gamma}_F, \dot{\gamma}_F,
      \dot{\gamma}_G).
  \end{equation}
\end{lem}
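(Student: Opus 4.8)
The plan is to deduce the identity from the bi‑Lagrangian Gauss equation $(\ref{eq:s2w-lagr-gauss})$ together with the fact, flagged before the statement, that $\gamma_F$ is a $\nabla$‑geodesic. Extending $\dot\gamma_F,\dot\gamma_G$ to vector fields on $S$ and evaluating tensorially at $p$, equation $(\ref{eq:s2w-lagr-gauss})$ with $X=W=\dot\gamma_G$ and $Y=Z=\dot\gamma_F$ reads
\[
\begin{aligned}
  Rs_p(\dot\gamma_G,\dot\gamma_F,\dot\gamma_F,\dot\gamma_G)
    &- Rs^S_p(\dot\gamma_G,\dot\gamma_F,\dot\gamma_F,\dot\gamma_G) \\
    &= \omega\big(\II(\dot\gamma_G,\dot\gamma_F),\II(\dot\gamma_F,\dot\gamma_G)\big)
      - \omega\big(\II(\dot\gamma_G,\dot\gamma_G),\II(\dot\gamma_F,\dot\gamma_F)\big),
\end{aligned}
\]
so it is enough to show that both terms on the right vanish.

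The first term vanishes for formal reasons. Since $\nabla$ and $\nabla^S$ are both torsionless, the second fundamental form in $(\ref{eq:s2w-lagr-II})$ is symmetric, whence $\II(\dot\gamma_G,\dot\gamma_F)=\II(\dot\gamma_F,\dot\gamma_G)$, and the first term becomes $\omega(u,u)=0$ by the antisymmetry of $\omega$. For the second term it suffices to prove $\II(\dot\gamma_F,\dot\gamma_F)=0$, i.e. that $\gamma_F$ is a geodesic of both $\nabla$ and $\nabla^S$. The hypothesis $dH_{|T\fol}=0$ says that $\omega(X_H,\cdot)$ annihilates $T\fol$; since $T\fol$ is a Lagrangian, hence maximal isotropic, subbundle, this forces $X_H\in\Gamma(T\fol)$. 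Thus, along $\gamma_F$, the velocity $\dot\gamma_F=X_H$ is $\fol$‑tangent, and by Proposition $\ref{thm:s2w-lagr-formula}(\ref{thm:s2w-lagr-formula:alpha})$ together with the identification $\alpha X_H=(\iota_{X_H}\omega)_{|T\gol}=(dH)_{|T\gol}$ we get
\[
  \nabla_{\dot\gamma_F}\dot\gamma_F
    = \nabla_{X_H}X_H
    = \alpha^{-1}\nabla_{X_H}(\alpha X_H)
    = \alpha^{-1}\nabla_{X_H}(dH) = 0,
\]
where the last equality is exactly equation $(\ref{eq:s2w-lagr-leafwise-const})$ applied to the $\fol$‑leafwise constant function $H$ and to $Y=X_H\in\Gamma(T\fol)$ (this is the same computation used in the proof of Lemma $\ref{thm:s2w-lagr-frames}$). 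Since $\nabla^S=p_S\circ\nabla$ by Proposition $\ref{thm:s2w-lagr-conn-proj}$, it follows that $\nabla^S_{\dot\gamma_F}\dot\gamma_F=p_S(0)=0$ as well, so $\II(\dot\gamma_F,\dot\gamma_F)=\nabla_{\dot\gamma_F}\dot\gamma_F-\nabla^S_{\dot\gamma_F}\dot\gamma_F=0$ and the second term also vanishes. Combining the two observations yields $(\ref{eq:s2w-lagr-subweb-rs})$.

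The only step carrying genuine content is the geodesy of $\gamma_F$: one must recognize that the condition $dH_{|T\fol}=0$ in Definition $\ref{def:s2w-lagr-gen}$ is precisely what makes $X_H$ tangent to $\fol$, which lets Proposition $\ref{thm:s2w-lagr-formula}$ reduce $\nabla_{X_H}X_H$ to the covariant derivative of the $1$‑form $dH$ in a leaf direction of $\fol$, and that derivative is killed by the leafwise‑parallelism computation $(\ref{eq:s2w-lagr-leafwise-const})$. Everything else — the symmetry of $\II$, the antisymmetry of $\omega$, and the projection formula of Proposition $\ref{thm:s2w-lagr-conn-proj}$ — enters only as bookkeeping. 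Note that $\gamma_G$ need not be assumed geodesic, since $\II(\dot\gamma_G,\dot\gamma_G)$ appears in the Gauss equation only paired against the vanishing factor $\II(\dot\gamma_F,\dot\gamma_F)$.
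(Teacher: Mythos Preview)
Your proof is correct and follows essentially the same route as the paper: apply the Gauss equation $(\ref{eq:s2w-lagr-gauss})$ and kill the $\II(\dot\gamma_F,\dot\gamma_F)$ term by showing that $\gamma_F$ is a $\nabla$-geodesic via $(\ref{eq:s2w-lagr-leafwise-const})$ and Proposition~$\ref{thm:s2w-lagr-formula}(\ref{thm:s2w-lagr-formula:alpha})$. The one difference is in the cross term: the paper disposes of $\omega(\II(\dot\gamma_G,\dot\gamma_F),\II(\dot\gamma_F,\dot\gamma_G))$ by invoking the bi-Lagrangian fact that $\II(v,w)=0$ whenever $v\in T_p\pol$, $w\in T_p\qol$, whereas you use the more elementary observation that $\II$ is symmetric (both connections being torsionless) so this term is $\omega(u,u)=0$. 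Your argument is cleaner here and, as you note, makes transparent why no geodesy hypothesis on $\gamma_G$ is needed.
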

\begin{proof}
  Note that $\dot\gamma_F\in T\fol$, since $T\fol$ is a Lagrangian subspace
  of $(TM,\omega_p)$ and $\omega(\dot\gamma_F,\cdot) = dH$. Now, the equality
  $dH_{|T\fol}=0$ leads via $(\ref{eq:s2w-lagr-leafwise-const})$ to
  $\nabla_{\dot\gamma_F} dH = 0$. Use Proposition \ref{thm:s2w-lagr-formula} to
  obtain
    $\nabla_{\dot\gamma_F}\dot\gamma_F =
    \nabla_{\dot\gamma_F}\alpha^{-1}dH =
    \alpha^{-1}\nabla_{\dot\gamma_F}dH = 0$.
  This implies $\II(\dot\gamma_F,\dot\gamma_F) = 0$. Since $\II(v,w) = 0$ for
  every $v\in T_p\pol,w\in T_p\qol$, an application of
  $(\ref{eq:s2w-lagr-gauss})$ proves the claim.
\end{proof}

The proof of the Proposition below provides a coordinate-free construction of
bi-Lagrangian surfaces generated by Hamiltonians.

\begin{lem}
  \label{thm:s2w-lagr-surf}
  Let $\web_\omega=(M,\omega,\fol,\gol)$ be a bi-Lagrangian manifold. Given any
  point $p\in M$ and any pair of tangent vectors $v\in T_p\fol$, $w\in T_p\gol$
  with $\omega(v,w)\neq 0$ there exists a bi-Lagrangian surface
  $S\subseteq M$ generated by Hamiltonians $H$, $K$ at $p$ such that the
  integral curves $\gamma_H,\gamma_K$ of the corresponding Hamiltonian vector
  fields $X_H,X_K\in\mathfrak{X}(M)$ crossing $p$ are exactly the geodesics of
  $\nabla$ satisfying $\dot{\gamma}_H(0)=v$ and $\dot{\gamma}_K(0)=w$.
\end{lem}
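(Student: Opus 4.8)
The plan is to reverse-engineer the Hamiltonians $H$, $K$ from the prescribed initial vectors $v,w$ by first building the two geodesics and then thickening them into the desired surface. Fix $p\in M$ and vectors $v\in T_p\fol$, $w\in T_p\gol$ with $\omega_p(v,w)\neq0$. Let $\gamma_H$ be the $\nabla$-geodesic with $\gamma_H(0)=p$, $\dot\gamma_H(0)=v$, and $\gamma_K$ the $\nabla$-geodesic with $\gamma_K(0)=p$, $\dot\gamma_K(0)=w$. Because $T_p\fol$ is Lagrangian and $v\in T_p\fol$, property $(\ref{def:s2w-lagr-conn:fol})$ of Definition \ref{def:s2w-lagr-conn} forces $\dot\gamma_H(\tau)\in T\fol$ for all $\tau$, so $\gamma_H$ lies in the leaf $F\in\fol$ through $p$; similarly $\gamma_K$ lies in the leaf $G\in\gol$ through $p$. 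The germ of the surface will be $S=\gamma_H\times\gamma_K$ under the local product splitting $M\cong F\times G$ near $p$, which is automatically a $2$-dimensional bi-Lagrangian submanifold once $\omega_p(\dot\gamma_H(0),\dot\gamma_K(0))=\omega_p(v,w)\neq0$, and this inequality persists on a neighbourhood by continuity.

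The substantive step is producing Hamiltonians $H,K\in C^\infty(M)$ with $dH_{|T\fol}=0$, $dK_{|T\gol}=0$, $\omega_p(X_H,X_K)\neq0$, and whose Hamiltonian flows have $\gamma_H,\gamma_K$ as integral curves through $p$. I would first deal with $H$. The condition $dH_{|T\fol}=0$ says exactly that $H$ is constant along the leaves of $\fol$; equivalently, via $(\ref{eq:s2w-lagr-leafwise-const})$ and the discussion after Lemma \ref{thm:s2w-lagr-frames}, $H$ descends to a function on the local leaf space of $\fol$, which may be identified with the leaf $G$ through $p$. In coordinates $(x_1,\dots,x_n,y_1,\dots,y_n)$ adapted to $\web_\omega$ as in Lemma \ref{thm:s2w-lagr-frames}, this means $H=H(y_1,\dots,y_n)$, and then $X_H=\sum_{i,j}A^{-1}_{ij}(\partial H/\partial y_j)\,\partial/\partial x_i$ lies in $T\fol$ as required; conversely any function of $y$ alone works. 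So I must choose $H(y)$ so that the integral curve of the corresponding vector field through $p=0$ is the prescribed geodesic $\gamma_H$. Here I use that $\gamma_H$, being a $\nabla$-geodesic inside the leaf $F$ (where $\nabla$ restricts to Bott's connection, which is flat along $F$ by Lemma \ref{thm:s2w-lagr-frames}), is a \emph{straight line} in the affine coordinates on $F$ given by the flat frame $(\partial/\partial y_j)$: writing $\gamma_H(\tau)=(\,\tau v\,,0)$ in coordinates, where I abusively write $v=(v_1,\dots,v_n)$ for its $y$-components (the $x$-components of $v$ vanish since $v\in T_p\fol$ — wait, that's backwards relative to the chart), so more precisely $\gamma_H$ has constant-speed motion along the $T\fol$-directions. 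I then pick $H(y)$ to be the linear functional $H(y)=\sum_j c_j y_j$ with the constants $c_j$ chosen (using invertibility of $A(0)$) so that $X_H(p)=v$; since $X_H$ with $H$ linear in $y$ need not have $\gamma_H$ as its flow line away from $p$ unless $A$ is constant, I instead take $H$ to be a suitable function of $y$, not necessarily linear, adapted to the chart in which $\gamma_H$ is a coordinate line — concretely, first change the $y$-coordinates so that $\gamma_H$ becomes the $y_1$-axis, then set $H=y_1$ after rescaling, and check $X_H(p)=v$. The construction of $K$ from $w$ is symmetric, using functions of $x$ alone and the flatness of $\nabla$ along $G$.

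The main obstacle is this matching of the Hamiltonian flow of $H$ with the prescribed geodesic $\gamma_H$ globally along $\gamma_H$, not merely at $p$: a priori $X_H$ and $\dot\gamma_H$ agree only at the single point $p$. The clean way around it is to build the adapted chart first. Since $\gamma_H$ is an embedded arc in the leaf $F$ and $\dot\gamma_H$ is a nowhere-zero section of $T\fol$ along it, one extends $\dot\gamma_H$ to a local vector field $Z\in\Gamma(T\fol)$ near $p$ with $Z_{|\gamma_H}=\dot\gamma_H$, straightens $Z$ to $\partial/\partial y_1$ by a coordinate change preserving the web structure (possible because $Z$ is tangent to $\fol$), and then observes that the $1$-form $\iota_Z\omega$ restricted to $T\gol$ is closed along leaves of $\fol$ — here one invokes the leafwise Poincaré lemma for $d_x$ recalled before Theorem \ref{thm:s2w-lagr-fg} — hence equals $d_yH$ for a function $H$ of $y$ alone; this $H$ satisfies $X_H=Z$, so its flow line through $p$ is exactly $\gamma_H$, and $dH_{|T\fol}=0$ by construction. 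The nondegeneracy $\omega_p(X_H,X_K)=\omega_p(v,w)\neq0$ is then immediate, completing the proof.
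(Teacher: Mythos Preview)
You are working much harder than necessary, and the detour you take contains a genuine gap. The crucial observation you are missing is one the paper establishes just before this lemma (in the proof of Lemma~\ref{thm:s2w-lagr-slicecore}): whenever $dH_{|T\fol}=0$, \emph{every} integral curve of $X_H$ is automatically a $\nabla$-geodesic, since $\nabla_{\dot\gamma}\dot\gamma=\alpha^{-1}\nabla_{\dot\gamma}dH=0$ by $(\ref{eq:s2w-lagr-leafwise-const})$. Hence there is no ``matching problem'' at all: it suffices to pick \emph{any} function $H$ that is constant on leaves of $\fol$ and satisfies $dH_{|p}=\omega(v,\cdot)$. The integral curve of $X_H$ through $p$ is then a geodesic with initial velocity $v$, so by uniqueness of geodesics it \emph{is} $\gamma_H$. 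The paper does exactly this: choose any $\tilde H\in C^\infty(G)$ with $d\tilde H_{|p}=\omega(v,\cdot)_{|TG}$ and extend it to be constant on $\fol$-leaves. No coordinate straightening, no Poincar\'e lemma is needed for this step.

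Your ``clean way around it'' is not correct as written. First, there is persistent confusion between the $x$- and $y$-directions (you straighten $Z\in\Gamma(T\fol)$ to $\partial/\partial y_1$, but vectors in $T\fol$ lie in the $\partial/\partial x_i$-span in your chosen chart). Second, and more seriously, the claim that $\iota_Z\omega$ is $d_x$-closed for an arbitrary extension $Z\in\Gamma(T\fol)$ of $\dot\gamma_H$ is false: writing $Z=\sum_k Z_k\,\partial/\partial x_k$ one gets $\iota_Z\omega=\sum_{k,j}Z_kA_{kj}\,dy_j$, and $d_x$ of this vanishes only if each $\sum_k Z_kA_{kj}$ is independent of $x$, which is a nontrivial constraint you have not arranged. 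Equivalently, $d(\iota_Z\omega)=\mathcal{L}_Z\omega$, so you would need $Z$ to be symplectic, not merely tangent to $\fol$. Thus the existence of $H$ with $X_H=Z$ is not established by your argument.
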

\begin{proof}
  Let $F$, $G$ be the leaves of $\fol$, $\gol$ crossing $p$ inside a
  sufficiently small open neighbourhood $U$ of $p$ and let
  $\eta=\omega(v,\cdot)$, $\xi=\omega(\cdot,w)\in T_p^*M$. Pick any smooth function
  $\tilde{H}\in C^\infty(G)$ on $G$ such that $\eta_{|TG}=d\tilde{H}_{|p}$ and
  extend it to a function $H\in C^\infty(M)$ which is constant on the leaves of
  $\fol$ inside the neighbourhood $U$. This property guarantees that the
  function $H$ satisfies $dH_{|T\fol} = 0$ and $\eta = dH_{|p}$, since $v\in
  T\fol=T\fol^\omega$. One similarly
  constructs the other function $K\in C^\infty(M)$ so that $dK_{|T\gol} = 0$
  and $\xi = dK_{p}$. The corresponding Hamiltonian vector fields satisfy
  \begin{equation}
    \begin{aligned}
      \omega(X_{H|p},\cdot) = dH_{|p} = \eta = \omega(v,\cdot),&\quad\text{
        hence }\quad X_{H|p} = v,\\
      \omega(\cdot,X_{K|p}) = dK_{|p} = \xi = \omega(\cdot,w),&\quad\text{
        hence }\quad X_{K|p} = w,
    \end{aligned}
  \end{equation}
  and, for each $Y\in\Gamma(T\fol)$ and $Z\in\Gamma(T\gol)$,
  \begin{equation}
    \begin{aligned}
      \omega(X_H,Y) = dH(Y) = 0,&\quad\text{ hence }\quad
      X_{H|q}\in T_q\fol^\omega=T_q\fol \text{ for each } q\in U,\\
      \omega(Z,X_K) = dK(Z) = 0,&\quad\text{ hence }\quad
      X_{K|q}\in T_q\gol^\omega=T_q\gol \text{ for each } q\in U.\\
    \end{aligned}
  \end{equation}

  Restrict the vector fields $X_H,X_K$ to the leaves $F,G$ of $\fol,\gol$
  crossing $p$ respectively. Recall that, by Lemma \ref{thm:s2w-lagr-frames},
  the connection $\nabla$ is flat on leaves of $\fol$ and $\gol$. Thus, the
  vector fields $X_{H|F},X_{K|G}$ extend to smooth vector fields $Y,Z$ defined
  in an open neighbourhood of $p$ which are $\nabla$-parallel along the leaves
  of $\gol,\fol$, as smoothly parametrized families of $\nabla$-parallel
  extensions of individual tangent vectors $X_{H|q},X_{K|q'}$ along the leaves
  of $\gol,\fol$ crossing $q\in F, q'\in G$ respectively.

  Since $\nabla_V\Gamma(T\fol)\subseteq\Gamma(T\fol)$ and
  $\nabla_V\Gamma(T\gol)\subseteq\Gamma(T\gol)$ for each $V\in\mathfrak{X}(M)$
  by property $(\ref{def:s2w-lagr-conn:fol})$ of Definition
  $\ref{def:s2w-lagr-conn}$, we have $Y\in\Gamma(T\fol)$ and
  $Z\in\Gamma(T\gol)$. This gives
  \begin{equation}
    [Y,Z] = \nabla_YZ-\nabla_ZY = 0-0 = 0,
  \end{equation}
  proving that the tangent distribution $\mathcal{D}=\langle
  Y,Z\rangle\subseteq TM$ is involutive.
  An application of Frobenius integrability theorem to $\mathcal{D}$ yields a
  foliation $\hol$ of $U$ by surfaces. Let $S\in\hol$ be the leaf of $\hol$
  crossing $p$. Since $Y\in\Gamma(T\fol\cap TS)$ and $Y_{|F\cap S} = X_{H}$, we
  obtain that the curve $F\cap S$ is an integral curve of the vector field
  $X_H$, and analogously $G\cap S$ is the integral curve of $X_K$. Since
  $\omega(X_H,X_K)=\omega(v,w)\neq 0$ and $dH_{|T\fol}=0$, $dK_{|T\gol}=0$, the
  surface $S$ is a bi-Lagrangian surface generated by Hamiltonians $H,K$ at
  $p$.
\end{proof}

Note that, given a bi-Lagrangian surface $S$ generated by Hamiltonians, the
induced symplectic form $\omega_{|S}$ is a volume form. Moreover, the
connection $\nabla^S$ preserves the volume form $\omega_{|S}$, parallelizes the
induced foliations $\pol$ and $\qol$, and is torsionless. By uniqueness in
Proposition \ref{thm:dfw-conn}, $\nabla^S$ is the natural connection
associated to the divergence-free $2$-web $\web_{\omega|S}$. This shift in
focus from symplectic to unimodular point of view opens a way to interpret the
curvature of $\nabla^S$ in affine-geometric terms using a wide variety of
geometric invariants associated with divergence-free webs.
The curvature data acquired in this way is reflected in the curvature of the
$\web_\omega$-connection $\nabla$ itself, as evidenced, for instance, by Lemma
\ref{thm:s2w-lagr-slicecore} above, highlighting the possibility to reduce the
study of $\web_\omega$ to the investigation of divergence-free web-geometric
invariants of certain surfaces in $M$. In particular, the answer to the
question of triviality of $\web_\omega$ is within the reach of these tools, as
demonstrated by the main theorem of this part of our work.

\begin{thm}
  \label{thm:s2w-lagr-equiv}
  Let $\web_\omega=(M, \omega, \fol, \gol)$ be a bi-Lagrangian manifold,
  and let $\nabla$ be its associated $\web_\omega$-connection. The
  following conditions are equivalent:
  \begin{enumerate}[label=$(\alph{enumi})$, ref=\alph{enumi}]
    \item\label{thm:s2w-lagr-main:charts}
      $M$ can be covered with coordinate charts $(x_i,y_j)_{i,j=1}^n$
      in which $T\fol = \bigcap_{i=1}^n \ker dy_i$, $T\gol = \bigcap_{i=1}^n
      \ker dx_i$ and $\omega = \sum_{i=1}^n dx_i\wedge dy_i$.
    \item\label{thm:s2w-lagr-main:flat}
      $\nabla$ is flat.
    \item\label{thm:s2w-lagr-main:subwebsflat}
      For every $2$-dimensional bi-Lagrangian submanifold $S$ generated by
      Hamiltonians, the associated $\web_{\omega|S}$-connection $\nabla^S$ is
      flat.
    \item\label{thm:s2w-lagr-main:subwebsother}
      For each point $p\in M$, every $2$-dimensional bi-Lagrangian submanifold $S$
      generated by Hamiltonians at $p$ satisfies one of the geometric
      triviality conditions of Theorem \ref{thm:dfw-geom} at $p\in S$.
  \end{enumerate}
\end{thm}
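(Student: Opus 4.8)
The plan is to establish the chain of equivalences by proving the cycle $(\ref{thm:s2w-lagr-main:charts})\Rightarrow(\ref{thm:s2w-lagr-main:flat})\Rightarrow(\ref{thm:s2w-lagr-main:subwebsflat})\Rightarrow(\ref{thm:s2w-lagr-main:subwebsother})\Rightarrow(\ref{thm:s2w-lagr-main:flat})$, with the easy implication $(\ref{thm:s2w-lagr-main:flat})\Rightarrow(\ref{thm:s2w-lagr-main:charts})$ closing the loop. For $(\ref{thm:s2w-lagr-main:charts})\Rightarrow(\ref{thm:s2w-lagr-main:flat})$ I would simply observe that in such coordinates the matrix $A$ from Proposition \ref{thm:s2w-lagr-conn-coords} is the identity, so $d_xA=d_yA=0$ and the curvature $2$-forms \eqref{eq:s2w-lagr-conn-coords-curv} vanish; alternatively, this is the direction $A=f(x)g(y)$ of Theorem \ref{thm:s2w-lagr-fg} with $f=g=I$. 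For $(\ref{thm:s2w-lagr-main:flat})\Rightarrow(\ref{thm:s2w-lagr-main:subwebsflat})$ I would use the bi-Lagrangian Gauss equation \eqref{eq:s2w-lagr-gauss}: if $\nabla$ is flat then $Rs\equiv 0$, and for a $2$-dimensional bi-Lagrangian submanifold $S$ the only potentially nonzero component of $Rs^S$ is $Rs^S_p(\dot\gamma_G,\dot\gamma_F,\dot\gamma_F,\dot\gamma_G)$ by the symmetries \eqref{thm:s2w-lagr-symmetries:2form}--\eqref{thm:s2w-lagr-symmetries:fols}; Lemma \ref{thm:s2w-lagr-slicecore} identifies this with the corresponding component of $Rs$, which is zero, hence $\nabla^S$ is flat. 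The implication $(\ref{thm:s2w-lagr-main:subwebsflat})\Rightarrow(\ref{thm:s2w-lagr-main:subwebsother})$ is immediate from Theorem \ref{thm:dfw-geom}, since flatness of $\nabla^S$ is one of the listed equivalent triviality conditions, and so is $(\ref{thm:s2w-lagr-main:subwebsother})\Rightarrow$ pointwise flatness of each such $\nabla^S$.

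The substantive step, and the one I expect to be the main obstacle, is $(\ref{thm:s2w-lagr-main:subwebsflat})\Rightarrow(\ref{thm:s2w-lagr-main:flat})$: deducing ambient flatness from flatness of all the generated $2$-dimensional subwebs. Here I would argue pointwise, fixing $p\in M$ and aiming to show $Rs_p=0$. Because of the symmetries \eqref{thm:s2w-lagr-symmetries:2form}--\eqref{thm:s2w-lagr-symmetries:fols}, the tensor $Rs_p$ is determined by its values $Rs_p(X_\gol,Y_\fol,Z_\fol,W_\gol)$ with the first and last arguments in $T\gol$ and the middle two in $T\fol$ (and, using \eqref{thm:s2w-lagr-symmetries:omega}, one can also swap to the mixed form matching the statement of Lemma \ref{thm:s2w-lagr-slicecore}); indeed in the moving-frame picture of Proposition \ref{thm:s2w-lagr-conn-coords} the curvature lives in the block $d_y(d_xA\cdot A^{-1})^T$, i.e.\ its nonzero entries are indexed by one $x$-direction and one $y$-direction. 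So it suffices to show that every such ``mixed sectional'' component vanishes. Given arbitrary $v\in T_p\fol$, $w\in T_p\gol$ with $\omega(v,w)\neq 0$, Lemma \ref{thm:s2w-lagr-surf} produces a bi-Lagrangian surface $S$ generated by Hamiltonians whose spanning geodesics have $\dot\gamma_F(0)=v$, $\dot\gamma_G(0)=w$; by hypothesis $\nabla^S$ is flat, so $Rs^S_p=0$, and Lemma \ref{thm:s2w-lagr-slicecore} then gives $Rs_p(w,v,v,w)=0$.

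The remaining work is to upgrade ``$Rs_p(w,v,v,w)=0$ for all admissible $v,w$'' to ``$Rs_p=0$''. I would handle this by a polarization argument: the map $(v,w)\mapsto Rs_p(w,v,v,w)$ is a polynomial (bi-quadratic) function on $T_p\fol\times T_p\gol$ vanishing on the dense open set $\{\omega(v,w)\neq 0\}$, hence vanishing identically on $T_p\fol\times T_p\gol$; then, using the bilinearity of $Rs_p$ in the pairs $(Z,W)$ and $(X,Y)$ together with the antisymmetry \eqref{thm:s2w-lagr-symmetries:2form} and symmetry \eqref{thm:s2w-lagr-symmetries:omega}, standard polarization recovers $Rs_p(w_1,v_1,v_2,w_2)$ as a linear combination of expressions of the form $Rs_p(w,v,v,w)$ with $v\in\{v_1,v_2,v_1\pm v_2\}$, $w\in\{w_1,w_2,w_1\pm w_2\}$, each of which is zero. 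Thus all mixed components of $Rs_p$ vanish, and by the reduction above $Rs_p=0$; since this holds at every $p$, $R\equiv 0$ and $\nabla$ is flat. Finally $(\ref{thm:s2w-lagr-main:flat})\Rightarrow(\ref{thm:s2w-lagr-main:charts})$: flatness gives, via Theorem \ref{thm:s2w-lagr-fg}, $A(x,y)=f(x)g(y)$ in any adapted chart, and replacing the coordinate vector fields $\basis{x_i}$, $\basis{y_j}$ by the frames $(X_{y_i},\basis{y_j})$, $(\basis{x_i},X_{x_j})$ of Lemma \ref{thm:s2w-lagr-frames}---or, concretely, performing the $x$-dependent change of the $y$-coordinates and $y$-dependent change of the $x$-coordinates that absorbs $f$ and $g$---brings $\omega$ into the normal form $\sum dx_i\wedge dy_i$ while preserving the conditions $T\fol=\bigcap\ker dy_i$, $T\gol=\bigcap\ker dx_i$. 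The one point requiring a little care is checking that this change of coordinates can be chosen to respect both foliations simultaneously; this follows because $f$ depends only on $x$ and $g$ only on $y$, so the two rescalings act on disjoint sets of coordinates.
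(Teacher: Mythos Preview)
Your cycle is mostly sound, and the substantive implication $(\ref{thm:s2w-lagr-main:subwebsother})\Rightarrow(\ref{thm:s2w-lagr-main:flat})$ via Lemma~\ref{thm:s2w-lagr-surf}, Lemma~\ref{thm:s2w-lagr-slicecore}, and polarization matches the paper's argument closely. The gap is in your $(\ref{thm:s2w-lagr-main:flat})\Rightarrow(\ref{thm:s2w-lagr-main:subwebsflat})$.

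Lemma~\ref{thm:s2w-lagr-slicecore} compares $Rs^S$ and $Rs$ only at the \emph{generating} point $p$: its proof uses that $\gamma_F$ is a $\nabla$-geodesic (so $\II(\dot\gamma_F,\dot\gamma_F)=0$), and this is established only for the integral curve of $X_H$ through $p$. At an arbitrary point $q=(\tilde x(s_0),\tilde y(t_0))\in S$, the induced leaf $s\mapsto(\tilde x(s),\tilde y(t_0))$ need not be a $\nabla$-geodesic (Definition~\ref{def:s2w-lagr-gen} constrains only the leaves through $p$), so the $\II$-terms in the Gauss equation~\eqref{eq:s2w-lagr-gauss} need not drop out and you cannot conclude $Rs^S_q=0$. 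Your argument yields $Rs^S_p=0$, not flatness of $\nabla^S$ on all of $S$.

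The paper handles $(\ref{thm:s2w-lagr-main:flat})\Rightarrow(\ref{thm:s2w-lagr-main:subwebsflat})$ differently: it invokes Theorem~\ref{thm:s2w-lagr-fg} to write $A(x,y)=f(x)g(y)$, parametrizes $S$ by $\varphi(s,t)=(\tilde x(s),\tilde y(t))$, and computes $\varphi^*\omega$ directly using the explicit form of $X_H,X_K$ in terms of $A^{-1}$, showing that $\omega_{|S}(\partial_s,\partial_t)$ is \emph{constant} on $S$ (equal to $\omega_p(X_H,X_K)$); flatness of $\nabla^S$ then follows from Theorem~\ref{thm:dfw-geom}. Your route can be salvaged more cheaply once you have $(\ref{thm:s2w-lagr-main:flat})\Rightarrow(\ref{thm:s2w-lagr-main:charts})$: in Darboux-adapted coordinates a Hamiltonian $H$ with $dH_{|T\fol}=0$ depends on $y$ alone, so $X_H$ is a constant vector field, the generated surface $S$ is an affine $2$-plane, and $\omega_{|S}$ is manifestly constant.

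A smaller point on $(\ref{thm:s2w-lagr-main:flat})\Rightarrow(\ref{thm:s2w-lagr-main:charts})$: the coordinate change must be of the form $\tilde x=\tilde x(x)$, $\tilde y=\tilde y(y)$ (your phrase ``$x$-dependent change of the $y$-coordinates'' would destroy the foliation conditions), and the existence of such a change absorbing $f$ and $g$ needs $d(\sum_i f_{ik}\,dx_i)=0$ and $d(\sum_j g_{kj}\,dy_j)=0$; the paper obtains this from $d\omega=0$ before applying the Poincar\'e lemma.
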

\begin{proof}
  The equivalence between $(\ref{thm:s2w-lagr-main:charts})$ and
  $(\ref{thm:s2w-lagr-main:flat})$ is known \cite{vaisman} and
  can be established using the correspondence between flatness
  (torsionlessness) of $\nabla$ and existence (commutativity) of local
  $\nabla$-parallel frames \cite[Chapter 9]{leesmooth}. Alternatively, one can
  use the coordinate formula $(\ref{eq:s2w-lagr-conn-coords-curv})$ to deduce
  $(\ref{thm:s2w-lagr-main:flat})$ from $(\ref{thm:s2w-lagr-main:charts})$ and
  obtain the converse by means of the following argument.

  Pick a point $p\in M$ and a coordinate system
  $(x_1,\ldots,x_n,y_1,\ldots,y_n)$ centered at $p\in M$. Assume that $\nabla$
  is flat. In this case, by Theorem \ref{thm:s2w-lagr-fg} there exist two
  matrix-valued function-germs $\maps{f,g}{(\Rb{n},0)}{M_{n\times n}(\Rb{n})}$
  satisfying $A(x,y) = f(x)\cdot g(y)$. Since the ambient symplectic $2$-form
  \begin{equation}
    \omega
      = \smsum{i,j}a_{ij}\,dx_i\wedge dy_j
      = \smsum{j} (\smsum{i} f_{ik}(x)\,dx_i)\wedge(\smsum{j} g_{kj}(y)\,dy_j)
  \end{equation}
  is closed, we get
  \begin{equation}
    \begin{aligned}
      0 = d\omega
        = \smsum{j} d(\smsum{i} f_{ik}(x)\,dx_i)&\wedge(\smsum{j}
        g_{kj}(y)\,dy_j) \\
        {}+ \smsum{j} (\smsum{i} f_{ik}(x)\,dx_i)&\wedge d(\smsum{j} g_{kj}(y)\,dy_j).
    \end{aligned}
  \end{equation}
  Since the two summands differ in the number of factors which annihilate
  $T\fol$, they are linearly independent, hence are both zero. By invertibility
  of $A$, and by extension $f$ and $g$, this reduces to
  \begin{equation}
    d(\smsum{i} f_{ik}(x)\,dx_i) = d(\smsum{j} g_{kj}(y)\,dy_j) = 0
      \quad\text{for }k=1,2,\ldots,n.
  \end{equation}
  By Poincare's lemma, there exist smooth function-germs $H_k,K_k$ satisfying
  $dH_k = \smsum{i} f_{ik}(x)\,dx_i$ and $dK_k = \smsum{j} g_{kj}(y)\,dy_j$
  with $H_k(0)=K_k(0)=0$ for $k=1,2,\ldots,n$. This allows us to write the
  symplectic form as
  \begin{equation}
    \omega = \smsum{k} dH_k\wedge dK_k.
  \end{equation}
  Since $\omega$ is nondegenerate, the 1-forms $dH_1, \ldots, dH_n, dK_1,
  \ldots, dK_n$ are linearly independent. Moreover, it is immediate from their
  defining formulae that these functions satisfy $H_k=H_k(x)$ and $K_k=K_k(y)$.
  Therefore, the diffeomorphism-germ
  \begin{equation}
    \varphi(x,y) = (H_1(x),\ldots,H_n(x), K_1(y),\ldots,K_n(y))
  \end{equation}
  preserves the foliations $\fol,\gol$ and carries
  $\omega$ into $\varphi^*(\omega) = \smsum{k} dx_k\wedge dy_k$; it changes the
  coordinate system into the one the existence of which was asserted in
  condition $(\ref{thm:s2w-lagr-main:charts})$.

  To deduce condition $(\ref{thm:s2w-lagr-main:subwebsflat})$ from
  $(\ref{thm:s2w-lagr-main:flat})$, fix two Hamiltonians generating the
  bi-Lagrangian surface $S$ and a coordinate system
  $(x_1,\ldots,x_n,y_1,\ldots,y_n)$ centered at $p\in M$ in which $T\fol =
  \bigcap_{i=1}^n \ker dy_i$, $T\gol = \bigcap_{i=1}^n \ker dx_i$. Let
  $\gamma_H(s)=(\tilde{x}(s),0)$ and $\gamma_K(t)=(0,\tilde{y}(t))$ be the
  integral curves of $X_H,X_K$ crossing $p$ at time $0$ and let
  $\varphi(s,t)=(\tilde{x}(s),\tilde{y}(t))$. The map $\varphi$ is a local
  parametrization of $S$. Again, Theorem \ref{thm:s2w-lagr-fg} allows us to
  write the matrix $A_{ij}=\omega(\basis{x_i},\basis{y_j})$ as
  $A(x,y)=f(x)\cdot g(y)$ for a~pair of matrix-valued function-germs
  $\maps{f,g}{(\Rb{n},0)}{M_{n\times n}(\Rb{1})}$.
  With this in hand, the vector fields $X_H, X_K$ take the form
  \begin{equation}
    \begin{aligned}
      X_H &= \smsum{i,j,k}\smpd{H}{y_i}(y)g^{-1}_{ij}(y)
        f^{-1}_{jk}(x)\,\basis{x_k}, \\
      X_K &= -\smsum{i,j,k}\smpd{K}{x_i}(x)f^{-1}_{ij}(x)
        g^{-1}_{jk}(x)\,\basis{y_k}.
    \end{aligned}
  \end{equation}
  Since $d\varphi(\basis{s})$ ($d\varphi(\basis{t})$) do not depend on the
  $y$-coordinates ($x$-coordinates), we have
  \begin{equation}
    \begin{aligned}
      d\varphi(\basis{s})_{|(s,t)} &= d\varphi(\basis{s})_{|(s,0)}
        = (X_H)_{(\tilde{x}(s),0)}
        = \smsum{i,j,k}\smpd{H}{y_i}(0)g^{-1}_{ij}(0)
          f^{-1}_{jk}(\tilde{x}(s))\,\basis{x_k}, \\
      d\varphi(\basis{t})_{|(s,t)} &= d\varphi(\basis{t})_{|(0,t)}
        = (X_K)_{(0,\tilde{y}(t))}
        = -\smsum{i,j,k}\smpd{K}{x_i}(0)f^{-1}_{ij}(0)
          g^{-1}_{jk}(\tilde{y}(t))\,\basis{y_k}.
    \end{aligned}
  \end{equation}
  Inserting these vector fields into the symplectic form
  \begin{equation}
    \omega
      = \smsum{j} (\smsum{i} f_{ik}(x)\,dx_i)\wedge(\smsum{j} g_{kj}(y)\,dy_j)
  \end{equation}
    we obtain that
  \begin{equation}
    \begin{aligned}
    \varphi^*\omega_{|(s,t)}(\basis{s},\basis{t})
      &= \omega_{|(\tilde{x}(s),\tilde{y}(t))}
        (d\varphi(\basis{s}),d\varphi(\basis{t})) \\
      &= -\smsum{i,j,k,l,m,u,v}
        (\smpd{H}{y_i}(0)g^{-1}_{ij}(0)f^{-1}_{jk}(\tilde{x}(s))
        \cdot(f_{kl}(\tilde{x}(s))g_{lm}(\tilde{y}(t)))\\
      &\hskip 10em
        \cdot(g^{-1}_{mu}(\tilde{y}(t))f^{-1}_{uv}(0)\smpd{K}{x_v}(0)) \\
      &= \omega_{|(0,0)}(X_H,X_K)
    \end{aligned}
  \end{equation}
  is constant, where the last equality follows from the assumption that the
  surface $S$ is generated by Hamiltonians $H,K$. This proves that the volume
  form $\omega_{|S}=\varphi^*\omega$ on $S$ takes the form $\omega_{|S} =
  ds\wedge dt$, which together with $\basis{s}\in T\pol$ and
  $\basis{t}\in T\qol$ implies that the connection $\nabla^S$
  associated with the divergence-free $2$-web-germ $\web_{\omega|S} =
  (S,\omega_{|S},\pol,\qol)$ is flat by Theorem \ref{thm:dfw-geom}.

  The proof of $(\ref{thm:s2w-lagr-main:subwebsother})$ given
  $(\ref{thm:s2w-lagr-main:subwebsflat})$ reduces to an application of Theorem
  \ref{thm:dfw-geom} to a subweb under consideration.

  The remaining implication from $(\ref{thm:s2w-lagr-main:subwebsother})$ to
  $(\ref{thm:s2w-lagr-main:flat})$ follows from the coincidence of symplectic
  curvature tensors of $\nabla$ and $\nabla^S$ (Lemma
  \ref{thm:s2w-lagr-slicecore}) for bi-Lagrangian surfaces $S$ generated by
  Hamiltonians at the anchor point $p\in M$. Take two tangent vectors $v\in
  T_p\fol$ and $w\in T_p\gol$ such that $\omega_p(v,w)\neq 0$ and use Lemma
  \ref{thm:s2w-lagr-surf} to find a bi-Lagrangian surface-germ $S$ at $p$ with
  tangent space $T_pS$ spanned by $v,w$. Since the
  volume-preserving holonomy of the divergence-free $2$-web $\web_{\omega|S} =
  (S,\omega_{|S},\pol,\qol)$ vanishes at $p\in S$ by
  $(\ref{thm:s2w-lagr-main:subwebsother})$, so does the curvature of its
  canonical connection $\nabla^S$ at point $p$ by combining Theorem
  \ref{thm:dfw-geom} with Lemma \ref{thm:dfw-loop-taylor}. This yields
  $Rs_p(w,v,v,w) = 0$ for every $v\in T_p\pol,w\in T_p\qol$ such that
  $\omega_p(v,w)\neq 0$.

  The proof that the vanishing of the above symplecitc analogue of sectional
  curvature implies that $Rs_p=0$ parallels the classical theory \cite[Chapter
  3]{oneillsemiriem}. Recall symmetries
  $(\ref{thm:s2w-lagr-symmetries:2form})$-$(\ref{thm:s2w-lagr-symmetries:fols})$
  of the symplectic curvature tensor $Rs$ (see p.
  \pageref{thm:s2w-lagr-symmetries:2form}).
  Assume that for each $X_\fol\in T_p\fol$ and $Y_\gol\in T_p\gol$ with
  $\omega(X_\fol,Y_\gol)\neq 0$ we have $Rs_p(X_\fol,Y_\gol,Y_\gol,X_\fol)=0$.
  Since the set of such pairs $(X_\fol,Y_\gol)$ is open and dense in
  $T_p\fol\times T_p\gol$, the equality holds also for pairs $(X_\fol, Y_\gol)$
  satisfying $\omega(X_\fol,Y_\gol)=0$ by continuity of $Rs_p$, hence we can
  drop the assumption about nonvanishing of $\omega(X_\fol,Y_\gol)$. Observe
  that, by $(\ref{thm:s2w-lagr-symmetries:flat})$ and
  $(\ref{thm:s2w-lagr-symmetries:fols})$, the bilagrangian curvature tensor
  $Rs$ vanishes whenever the first two or the last two argumetns are both in
  either $T_p\fol$ or $T_p\gol$. Using this fact, the algebraic Bianchi
  identity $(\ref{thm:s2w-lagr-symmetries:bianchi})$ of the form
  \begin{equation}
    Rs(X_\fol,Y_\gol,Z_\gol,X_\fol) + Rs(X_\fol,X_\fol,Y_\gol,Z_\gol)
    + Rs(X_\fol,Z_\gol,X_\fol,Y_\gol) = 0
  \end{equation}
  for $X_\fol\in T_p\fol$ and $Y_\gol,Z_\gol\in T_p\gol$, symmetry
  $(\ref{thm:s2w-lagr-symmetries:fols})$ applied to the second term and the
  antisymmetry $(\ref{thm:s2w-lagr-symmetries:2form})$ in the last two
  arguments of $Rs_p$, we obtain
  \begin{equation}
    \label{eq:s2w-lagr-extra-sym}
    Rs_p(X_\fol,Y_\gol,Z_\gol,X_\fol) = Rs_p(X_\fol,Z_\gol,Y_\gol,X_\fol).
  \end{equation}
  Therefore, by our assumption $Rs_p(X_\fol,Y_\gol,Y_\gol,X_\fol)=0$ we get
  \begin{equation}
    \begin{aligned}
      0 &= Rs_p(X_\fol, Y_\gol+Z_\gol, Y_\gol+Z_\gol, X_\fol) \\
        &=Rs_p(X_\fol, Y_\gol, Z_\gol, X_\fol) + Rs_p(X_\fol,Z_\gol,Y_\gol,X_\fol) \\
        &= 2Rs_p(X_\fol, Y_\gol, Z_\gol, X_\fol).
    \end{aligned}
  \end{equation}
  In the same way we obtain for arbitrary $X_\fol,W_\fol\in T_p\fol$ and
  $Y_\gol,Z_\gol\in T_p\gol$ that
  \begin{equation}
    \label{eq:s2w-lagr-sectcurv1}
    \begin{aligned}
      0 &= Rs_p(X_\fol+W_\fol, Y_\gol, Z_\gol, X_\fol+W_\fol) \\
        &= Rs_p(X_\fol, Y_\gol, Z_\gol, W_\fol) + Rs_p(W_\fol,Y_\gol,Z_\gol,X_\fol) \\
        &\overset{\smash{\mathclap{(\ref{thm:s2w-lagr-symmetries:2form})}}}{=}
           Rs_p(X_\fol, Y_\gol, Z_\gol, W_\fol) - Rs_p(Y_\gol,W_\fol,X_\fol,Z_\gol) \\
        &\overset{\smash{\mathclap{(\ref{thm:s2w-lagr-symmetries:bianchi})}}}{=}
           Rs_p(X_\fol, Y_\gol, Z_\gol, W_\fol) + Rs_p(Y_\gol,X_\fol,Z_\gol,W_\fol) +
           Rs_p(Y_\gol,Z_\gol,W_\fol,X_\fol) \\
        &\overset{\smash{\mathclap{(\ref{thm:s2w-lagr-symmetries:fols})}}}{=}
           Rs_p(X_\fol, Y_\gol, Z_\gol, W_\fol) + Rs_p(Y_\gol,X_\fol,Z_\gol,W_\fol) \\
        &\overset{\smash{\mathclap{(\ref{thm:s2w-lagr-symmetries:omega})}}}{=}
           2Rs_p(X_\fol, Y_\gol, Z_\gol, W_\fol).
    \end{aligned}
  \end{equation}
  This lead us to $Rs_p(X,Y,Z,W)=0$ for arbitrary $X,Y,Z,W\in T_pM$ by
  multilinearity, since we can decompose each $V\in\smset{X,Y,Z,W}$ into
  $V_\fol+V_\gol$, where $V_\fol\in T_p\fol$ and $V_\gol\in T_p\gol$. Each of
  the 16 resulting terms will vanish due to symmetries of $Rs$ combined with
  the last equality $(\ref{eq:s2w-lagr-sectcurv1})$. Since the choice of the
  point $p\in M$ was arbitrary, the proof is complete. (Lastly, we note that
  this result also follows directly from the bi-Lagrangian/para-Kähler
  correspondence, since the vanishing of the ordinary sectional curvature
  tensor
  \begin{equation}
    K(X_\fol,Y_\gol)=Rm_p(X_\fol,Y_\gol,Y_\gol,X_\fol)
    \overset{(\ref{eq:s2w-lagr-rsrm})}{=}
    \pm Rs_p(X_\fol,Y_\gol,Y_\gol,X_\fol)
  \end{equation}
  for vectors tangent to the foliations $\fol,\gol$ can be easily extended to
  all pairs of vectors $X,Y$ spanning $g$-nondegenerate tangent planes. Having
  this, the classical theory yields the desired result.)
\end{proof}

The actual verification of the above geometric triviality conditions (Theorem
$\ref{thm:s2w-lagr-equiv}$, condition $(\ref{thm:s2w-lagr-main:subwebsother})$)
involves computing the areas of certain curvilinear quadrilaterals lying on
bi-Lagrangian surfaces. While these calculations can be carried out by
integrating a surface volume form induced by the symplectic form, we can
utilize the bi-Lagrangian structure of the ambient space instead to simplify
them significantly.

This simplification depends on the a certain well-known fact regarding the
behavior of a symplectic form $\omega$ with respect to a pair
of complementary Lagrangian foliations $\fol,\gol$. Its statement involves the
graded derivations $d_x,d_y$ of $\Omega^\bullet(\Rb{2n})$ satisfying
$d=d_x+d_y$ which extend the operation of taking the differentials $d_x
f_{|(x,y)} = \smsum{i} \smash{\smpd{f}{x_i}(x,y)}\,dx_i$ and $d_y f_{|(x,y)} =
\smsum{j} \smash{\smpd{f}{y_j}(x,y)}\,dy_j$ of a smooth function
$f\in\Omega^0(\Rb{2n})$ in directions tangent to leaves of a single foliation
$T\fol=\bigcap_{i=1}^n\ker dy_i =
\left\langle\basis{x_1},\basis{x_2},\ldots,\basis{x_n}\right\rangle$ or
$T\gol=\bigcap_{i=1}^n\ker dx_i =
\left\langle\basis{y_1},\basis{y_2},\ldots,\basis{y_n}\right\rangle$
(see the paragraph preceding Proposition $\ref{thm:s2w-lagr-conn-coords}$).
\begin{lem}[\cite{2-webs}]
  \label{thm:s2w-lagr-dp}
  Let $\omega=\smsum{i,j}A_{ij}\,dx_i\wedge dy_j\in\Omega^2(\Rb{2n},0)$ be a
  smooth $2$-form germ satisfying $d\omega=0$, where $A_{ij}\in
  C^\infty(\Rb{2n},0)$. There exists a smooth function-germ $h\in
  C^\infty(\Rb{2n},0)$ for which the following equality holds:
  \begin{equation}
    \omega = d_xd_yh.
  \end{equation}
\end{lem}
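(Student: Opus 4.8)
The plan is to regard $\omega$ as a $(1,1)$-form in the double complex $\Omega^{\bullet,\bullet}(\Rb{2n},\web_\omega)$ and to run the two parametrized Poincaré lemmas for $d_x$ and $d_y$ (recalled in the paragraph preceding Proposition \ref{thm:s2w-lagr-conn-coords}) one after the other, correcting in between by a term pulled back from the $y$-factor. First I would pass to a polydisc neighbourhood $M=U\times V$ of the origin with $TU=T\fol=\bigcap_i\ker dy_i$ and $TV=T\gol=\bigcap_j\ker dx_j$, which is contractible along the leaves of both $\fol$ and $\gol$, so that all versions of those lemmas apply. Since $d=d_x+d_y$ with $d_x,d_y$ of bidegrees $(1,0),(0,1)$, the hypothesis $d\omega=0$ splits by bidegree into $d_x\omega=0$ (the $(2,1)$-component) and $d_y\omega=0$ (the $(1,2)$-component).

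Because $\omega\in\Omega^{1,1}$ is $d_x$-closed, the $d_x$-Poincaré lemma produces $\beta\in\Omega^{0,1}(M,\web_\omega)$ with $\omega=d_x\beta$; concretely $\beta=\sum_j B_j(x,y)\,dy_j$. The obstacle is that $\beta$ need not be $d_y$-exact, and removing this obstruction is the crux of the argument. From $d_xd_y=-d_yd_x$ and $d_y\omega=0$ we get $d_x(d_y\beta)=0$ with $d_y\beta\in\Omega^{0,2}$, so the product version of the $d_x$-Poincaré lemma gives $d_y\beta=\pi_V^*\mu$ for a $2$-form $\mu$ on $V$; applying $d_y$ once more shows $\pi_V^*(d\mu)=0$, hence $\mu$ is closed, hence, $V$ being contractible, $\mu=d\nu$ for some $\nu\in\Omega^1(V)$. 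Put $\tilde\beta=\beta-\pi_V^*\nu\in\Omega^{0,1}$. Then $d_x\tilde\beta=d_x\beta=\omega$, since $\pi_V^*\nu$ is built from the $dy_j$ with coefficients independent of $x$ and is therefore killed by $d_x$; and $d_y\tilde\beta=d_y\beta-\pi_V^*(d\nu)=0$.

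Finally, $\tilde\beta\in\Omega^{0,1}$ is $d_y$-closed, so the $d_y$-analogue of the Poincaré lemma yields $h\in\Omega^{0,0}(M,\web_\omega)=C^\infty(M,0)$ with $\tilde\beta=d_yh$, and therefore $\omega=d_x\tilde\beta=d_xd_yh$, as claimed. I expect the only delicate point to be the double-complex bookkeeping — keeping bidegrees and the sign in $d_xd_y=-d_yd_x$ straight, and noting that the correction term $\pi_V^*\nu$ does not disturb the first equation. As a cross-check one can argue directly in coordinates: $d\omega=0$ is equivalent to $\partial_{x_k}A_{ij}=\partial_{x_i}A_{kj}$ together with $\partial_{y_l}A_{ij}=\partial_{y_j}A_{il}$; the first system gives $A_{ij}=\partial_{x_i}B_j$ for suitable $B_j(x,y)$, the second forces $\partial_{y_l}B_j-\partial_{y_j}B_l$ to be independent of $x$, hence a closed $2$-form in $y$ alone, which after subtracting from $(B_j)$ a $1$-form $\psi(y)$ becomes zero, so that $B_j-\psi_j=\partial_{y_j}h$ and $A_{ij}=\partial_{x_i}\partial_{y_j}h$, i.e. $\omega=d_xd_yh$.
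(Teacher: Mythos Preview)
The paper does not supply its own proof of this lemma; it is stated with a citation to \cite{2-webs} and used as a known fact. Your argument is correct and is precisely the kind of proof one would expect here: it uses the bigraded Poincar\'e lemmas for $d_x$ and $d_y$ recalled in the paragraph before Proposition~\ref{thm:s2w-lagr-conn-coords}, together with the standard correction by a pullback from the $y$-factor to kill the $d_y$-obstruction. The coordinate cross-check at the end is also valid and matches the intrinsic argument.
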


For instance, given a bi-Lagrangian structure-germ on the space of rays
$(T^*S^{n-1},\omega,\fol,\gol)$ induced by a pair of hypersurfaces $H,K$
parametrized by $s\mapsto x(s)\in H$ and $t\mapsto y(t)\in K$ (see Example
$\ref{ex:s2w-rays}$) the symplectic form $(\ref{eq:s2w-lagr-rays-omega})$
in coordinates $(s_1,\ldots,s_{n-1},t_1,\ldots,t_{n-1})$ satisfying
$T\fol=\bigcap_{i=1}^{n-1}\ker dt_i$ and $T\gol=\bigcap_{i=1}^{n-1}\ker ds_i$
reduces to
\begin{equation}
  \label{eq:s2w-lagr-rays-h}
  \omega = -d\Big(\sum_{i=1}^n\frac{x_i(s)-y_i(t)}{\abs{x(s)-y(t)}}\,dx_i\Big)
      = -d(d_s\abs{x(s)-y(t)}) = d_sd_t\abs{x(s)-y(t)},
\end{equation}
where in the last equality we used the identites $d=d_s+d_t$ and $d_td_t=0$.
Hence, in this case we can take $h(s,t)=\abs{x(s)-y(t)}$ as the double
potential of $\omega$ inside the statement of Lemma $\ref{thm:s2w-lagr-dp}$.

Now, if a surface $S$ has a boundary composed of four piecewise-smooth curves
$\gamma_1,\gamma_2,\gamma_3,\gamma_4$ such that $\dot\gamma_1,\dot\gamma_3\in
T\fol$, $\dot\gamma_2,\dot\gamma_4\in T\gol$, $\gamma_1(0)=\gamma_4(1)=(x,y)$
$\gamma_1(1)=\gamma_2(0)=(x',y)$, $\gamma_2(1)=\gamma_3(0)=(x',y')$ and
$\gamma_3(1)=\gamma_4(0)=(x,y')$, the integral of $\omega$ over $S$ simplifies
to
\begin{equation}
  \label{eq:s2w-lagr-integral-h}
  \begin{aligned}
    \int_S \omega &= \int_S d_xd_yh = \int_S d(d_yh) = \int_{\partial S} d_y h \\
      &= \underbrace{\int_{\gamma_1} d_y h}_{\mathclap{\dot\gamma_1\in T\fol\subseteq
      \ker d_yh}}
      + \int_{\gamma_2} d_y h
      + \underbrace{\int_{\gamma_3} d_y h}_{\mathclap{\dot\gamma_3\in T\fol\subseteq
      \ker d_yh}}
      + \int_{\gamma_4} d_y h \\
      &= \mask{\int_{\gamma_1} d_y h}{0}
      + \int_{\gamma_2} d_y h
      + \mask{\int_{\gamma_3} d_y h}{0}
      + \int_{\gamma_4} d_y h \\
      &= h(\gamma_2(1)) + h(\gamma_4(1)) - h(\gamma_2(0)) - h(\gamma_4(0)) \\
      &= h(x',y') + h(x,y) - h(x',y) - h(x,y').
  \end{aligned}
\end{equation}

\begin{figure}
  \centering
  \def\svgwidth{\textwidth}
  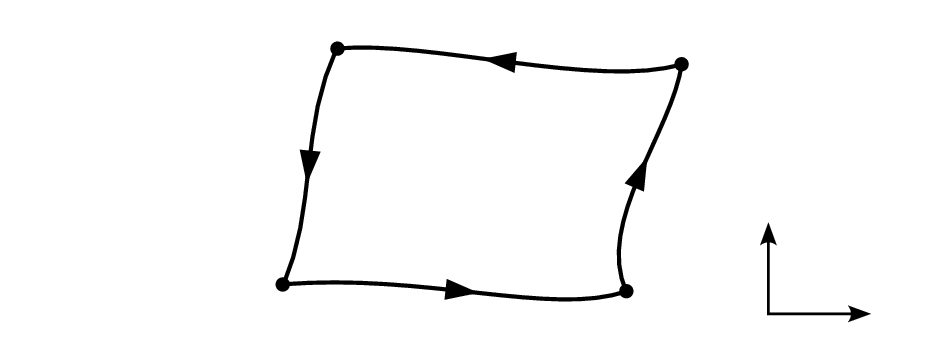
  \caption{\footnotesize Visualisation of integral formula
  $(\ref{eq:s2w-lagr-integral-h})$ in terms of the double potential $h(x,y)$ of
  the $2$-form $\omega$.}
  \label{fig:s2w-lagr-integral-dp}
\end{figure}

Integrals of these kind provide a foundation for a more refined geometric
interpretation of the curvature of bi-Lagrangian manifolds. We can use them to
give several conditions for flatness of the bi-Lagrangian structure $(M,
\omega, \fol, \gol)$ in terms of the function $h(x,y)$ of Lemma
$\ref{thm:s2w-lagr-dp}$ and Hamiltonians $f,g$ satisfying $df_{|T\fol}=0$ and
$dg_{|T\gol}=0$, which involve signed values of $h(x,y)$ on vertices of certain
quadrilaterals spanned by the integral curves $\gamma_x,\gamma_y$ of the
corresponding Hamiltonian vector fields $X_f,X_g\in\mathfrak{X}(M)$ (see Figure
$\ref{fig:s2w-lagr-integral-dp}$). As an example, we formulate three of these
flatness conditions by carrying over the statement of Theorems
$\ref{thm:s2w-lagr-equiv}$ and $\ref{thm:dfw-geom}$ directly into our setting.

\begin{thm}
  \label{thm:s2w-lagr-geom-h}
  Let $(M,\omega,\fol,\gol)$ be a bi-Lagrangian structure. For a given pair of
  leaves $F\in\fol$ and $G\in\gol$ let $(x_1,\ldots,x_n,y_1,\ldots,y_n)$ be a local
  coordinate system satisfying $F\cap G=\smset{0}$, $T\fol=\bigcap_{i=1}^n\ker
  dy_i$ and $T\gol=\bigcap_{i=1}^n\ker dx_i$, so that the coordinates $(x_1,\ldots,x_n)$
  parametrize $F$, the coordinates $(y_1,\ldots,y_n)$ parametrize $G$, and each
  point $p=(x,y)\in M$ corrseponds bijectively to a pair of points
  $p_x=(x_1,\ldots,x_n)\in F$ and $p_y=(y_1,\ldots,y_n)\in G$.

  \medskip\noindent
  Then the bi-Lagrangian connection $\nabla$ of $(M, \omega,\fol,\gol)$ is flat
  if and only if either of the conditions below is true for each pair of leaves
  $F\in\fol,G\in\gol$ and each pair of integral curves $\gamma_x\subseteq
  F,\gamma_y\subseteq G$ of Hamiltonian vector fields
  $X_f,X_g\in\mathfrak{X}(M)$ corresponding to Hamiltonians $f, g\in
  C^\infty(M)$ satisfying $df_{|T\fol}=0$ and $dg_{|T\gol}=0$.
  \begin{enumerate}[label=$(\arabic{enumi})$, ref=\arabic{enumi}]
    \item For each quadruple of points $p_1,p_3\in\gamma_x$, $q_1,q_3\in\gamma_y$ there
      exist two points $p_2\in\gamma_x$, $q_2\in\gamma_y$ such that
      \begin{equation}
        \begin{aligned}
          &h(p_1,q_1) + h(p_2,q_2) - h(p_1,q_2) - h(p_2,q_1) \\
          &\hskip 1em = h(p_1,q_2) + h(p_2,q_3) - h(p_1,q_3) - h(p_2,q_2) \\
          &\hskip 1em = h(p_2,q_2) + h(p_3,q_3) - h(p_2,q_3) - h(p_3,q_2) \\
          &\hskip 1em = h(p_2,q_1) + h(p_3,q_2) - h(p_2,q_2) - h(p_3,q_3).
        \end{aligned}
      \end{equation}
    \item For each triple of points $p_1,p_2,p_3\in\gamma_x$ and each triple
      $q_1,q_2,q_3\in\gamma_y$ for which it holds that
      \begin{equation}
        \begin{aligned}
          &h(p_1,q_1) + h(p_3,q_2) - h(p_1,q_2) - h(p_3,q_1) \\
          &\hskip 1em = h(p_1,q_2) + h(p_3,q_3) - h(p_1,q_3) - h(p_3,q_2)
        \end{aligned}
      \end{equation}
      the equality
      \begin{equation}
        \begin{aligned}
          &h(p_1,q_1) + h(p_2,q_2) - h(p_1,q_2) - h(p_2,q_1) \\
          &\hskip 1em = h(p_1,q_2) + h(p_2,q_3) - h(p_1,q_3) - h(p_2,q_2)
        \end{aligned}
      \end{equation}
      implies
      \begin{equation}
        \begin{aligned}
          &h(p_1,q_1) + h(p_2,q_2) - h(p_1,q_2) - h(p_2,q_1) \\
          &\hskip 1em = h(p_1,q_2) + h(p_2,q_3) - h(p_1,q_3) - h(p_2,q_2) \\
          &\hskip 1em = h(p_2,q_2) + h(p_3,q_3) - h(p_2,q_3) - h(p_3,q_2) \\
          &\hskip 1em = h(p_2,q_1) + h(p_3,q_2) - h(p_2,q_2) - h(p_3,q_3).
        \end{aligned}
      \end{equation}
    \item For each triple of points $p_1,p_2,p_3\in\gamma_x$ and each triple
      $q_1,q_2,q_3\in\gamma_y$ the following equality is satisfied.
      \begin{equation}
        \begin{aligned}
          &\big(
            h(p_1,q_1) + h(p_2,q_2) - h(p_1,q_2) - h(p_2,q_1)\big)\\
          &\hskip 3em
            \cdot \big(h(p_2,q_2) + h(p_3,q_3) - h(p_2,q_3) - h(p_3,q_2)\big) \\
          &\hskip 1em
            = \big(h(p_1,q_2) + h(p_2,q_3) - h(p_1,q_3) -
          h(p_2,q_2)\big)\\
          &\hskip 3em
            \cdot\big(h(p_2,q_1) + h(p_3,q_2) - h(p_2,q_2) - h(p_3,q_3)
            \big).
        \end{aligned}
      \end{equation}
  \end{enumerate}
\end{thm}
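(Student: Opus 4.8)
The plan is to read off conditions $(1)$, $(2)$ and $(3)$ as the translations, by means of the integral identity $(\ref{eq:s2w-lagr-integral-h})$, of the geometric flatness criteria \ref{thm:dfw-geom:split}, \ref{thm:dfw-geom:cut} and \ref{thm:dfw-geom:taba} of Theorem $\ref{thm:dfw-geom}$ applied to the divergence-free $2$-webs $\web_{\omega|S}$ carried by the bi-Lagrangian surfaces $S$ generated by Hamiltonians, and then to invoke the equivalence of conditions $(\ref{thm:s2w-lagr-main:flat})$ and $(\ref{thm:s2w-lagr-main:subwebsflat})$ of Theorem $\ref{thm:s2w-lagr-equiv}$ to identify flatness of $\nabla$ with flatness of all such $\nabla^S$. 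The dictionary behind the translation is the following. Fix leaves $F,G$ and coordinates as in the statement, so that $M\cong F\times G$ and each point of $M$ is a pair $(p_x,p_y)$ with $p_x\in F$, $p_y\in G$. Given Hamiltonians $f,g$ with $df_{|T\fol}=dg_{|T\gol}=0$ and $\omega(X_f,X_g)\neq 0$ and their integral curves $\gamma_x\subseteq F$, $\gamma_y\subseteq G$ through $F\cap G=\smset{0}$, the product $S:=\gamma_x\times\gamma_y\subseteq F\times G$ is a $2$-dimensional bi-Lagrangian submanifold generated by $f,g$ at $0$ in the sense of Definition $\ref{def:s2w-lagr-gen}$ — this uses that the leaf through $0$ of the foliation spanned by commuting vector fields $Y\in\Gamma(T\fol)$, $Z\in\Gamma(T\gol)$ produced in Lemma $\ref{thm:s2w-lagr-surf}$ is swept out as a product — its foliations $\pol,\qol$ have leaves $\gamma_x\times\smset{q}$ and $\smset{p}\times\gamma_y$, and conversely every bi-Lagrangian surface generated by Hamiltonians arises in this way (after centring the coordinates at its anchor point). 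The curvilinear quadrilateral on $S$ bounded by the $\pol$-leaves through $p_1,p_3\in\gamma_x$ and the $\qol$-leaves through $q_1,q_3\in\gamma_y$ is a region of $M$ whose four boundary arcs are alternately tangent to $T\fol$ and to $T\gol$, so the computation $(\ref{eq:s2w-lagr-integral-h})$ applied to the double potential $h$ of Lemma $\ref{thm:s2w-lagr-dp}$ shows that its $\omega_{|S}$-volume equals, up to a fixed orientation sign, $h(p_1,q_1)+h(p_3,q_3)-h(p_1,q_3)-h(p_3,q_1)$ in the notation of the statement.

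Granting this, suppose condition $(1)$ (respectively $(2)$, $(3)$) holds for all admissible data, and apply it with $\gamma_x,\gamma_y$ taken to be the integral curves of $X_f,X_g$ through $0$. Then the $h$-expressions occurring in $(1)$ (respectively $(2)$, $(3)$) become precisely the $\omega_{|S}$-volumes of the curvilinear subregions figuring in condition \ref{thm:dfw-geom:split} for $k=2$ (respectively \ref{thm:dfw-geom:cut}, \ref{thm:dfw-geom:taba}) for the web $\web_{\omega|S}$, the case $k=1$ of \ref{thm:dfw-geom:split} being automatic on a surface, e.g.\ by merging two of the four subregions or directly by the intermediate value theorem. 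By Theorem $\ref{thm:dfw-geom}$, $\nabla^S$ is then flat; since every $2$-dimensional bi-Lagrangian submanifold generated by Hamiltonians occurs as such an $S$, Theorem $\ref{thm:s2w-lagr-equiv}$ gives that $\nabla$ is flat.

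For the converse, assume $\nabla$ is flat. By Theorem $\ref{thm:s2w-lagr-fg}$ (or the argument proving the corresponding implication in Theorem $\ref{thm:s2w-lagr-equiv}$) there is a coordinate system adapted to $\web_\omega$ in which $\omega=\sum_i dx_i\wedge dy_i$, and then one may take the double potential to be $h=\sum_i x_iy_i$, the addends of $h$ depending on the $x$'s or the $y$'s alone being irrelevant since they cancel in every signed sum entering $(1)$–$(3)$. In these coordinates any integral curve of an admissible $X_f$ (with $f=f(y)$) lies in a leaf $\smset{y=c}$ on which $X_f$ is a constant vector field, hence is a straight line; likewise for $X_g$. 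Consequently the $\omega_{|S}$-volume of the curvilinear quadrilateral with corners $p,p'\in\gamma_x$, $q,q'\in\gamma_y$ equals $\langle x(p)-x(p'),\,y(q)-y(q')\rangle$, which factors as $(\mu-\mu')(\nu-\nu')\langle v,w\rangle$ with $v,w$ the fixed directions of $\gamma_x,\gamma_y$ and $\mu,\nu$ affine parameters along them. Substituting this product form into the $h$-expressions of $(1)$, $(2)$, $(3)$, each of the three reduces to an elementary identity for products of increments of $\mu,\nu$ and holds by inspection; since the signed sums of $h$-values are coordinate-independent, $(1)$, $(2)$, $(3)$ hold in the original coordinates as well.

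I expect the principal obstacle to be purely a matter of bookkeeping rather than any analytic difficulty: one must verify that the product surface $\gamma_x\times\gamma_y$ genuinely is a bi-Lagrangian submanifold generated by Hamiltonians — so that Theorem $\ref{thm:s2w-lagr-equiv}$ applies to it and no relevant surface is omitted — and then match, with the correct orientation signs and corner labels, the patterns of $h$-values in $(1)$–$(3)$ against the subdivision patterns behind \ref{thm:dfw-geom:split}, \ref{thm:dfw-geom:cut} and \ref{thm:dfw-geom:taba}; in particular one should check that the last bracket in $(3)$ is the $\omega_{|S}$-volume of the cell $[p_2,p_3]\times[q_1,q_2]$, namely $h(p_2,q_1)+h(p_3,q_2)-h(p_2,q_2)-h(p_3,q_1)$, so that the asserted equality is genuinely of the form $ac=bd$. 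The identity reducing the quadrilateral areas to signed values of $h$ is an immediate consequence of Lemma $\ref{thm:s2w-lagr-dp}$ and the computation $(\ref{eq:s2w-lagr-integral-h})$, and poses no trouble.
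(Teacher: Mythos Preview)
Your approach is correct and essentially the same as the paper's: both produce, for each admissible pair $\gamma_x,\gamma_y$, a bi-Lagrangian surface generated by Hamiltonians (via Lemma~\ref{thm:s2w-lagr-surf}), convert the $\omega_{|S}$-volumes of the relevant quadrilaterals into signed $h$-sums using Lemma~\ref{thm:s2w-lagr-dp} and formula~$(\ref{eq:s2w-lagr-integral-h})$, and then invoke Theorem~\ref{thm:s2w-lagr-equiv} together with Theorem~\ref{thm:dfw-geom}. Your explicit Darboux-coordinate check of the converse is more detailed than the paper's one-line appeal to the bidirectional Theorem~\ref{thm:s2w-lagr-equiv}, and your remark about the last bracket in~$(3)$ correctly flags what is evidently a typographical slip in the stated formula (the intended entry is $h(p_3,q_1)$, not $h(p_3,q_3)$, so that the identity reads $ac=bd$).
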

\begin{proof}
  For each pair of integral curves $\gamma_x\in F,\gamma_y\in G$ of
  $X_f,X_g\in\mathfrak{X}(M)$ one can find a bi-Lagrangian surface generated by
  Hamiltonians $f,g\in C^\infty(M)$ at $p\in F\cap G$ which is spanned by
  $\gamma_x,\gamma_y$ by Lemma $\ref{thm:s2w-lagr-surf}$. With this in mind,
  apply Lemma $\ref{thm:s2w-lagr-dp}$ and formula
  $(\ref{eq:s2w-lagr-integral-h})$ to Theorem $\ref{thm:s2w-lagr-equiv}$.
\end{proof}

\begin{figure}
  \centering
  \def\svgwidth{\textwidth}
  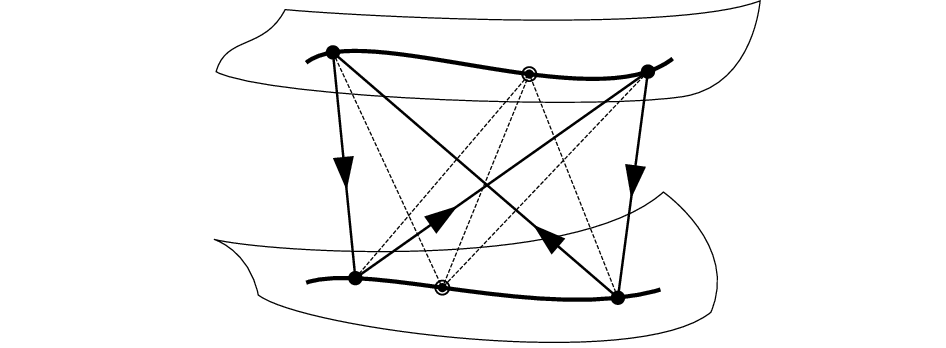
  \caption{\footnotesize Geometric interpretation of the symplectic form
  $\omega$ on the space of rays in terms of the embedding of hypersurfaces
  $H\times K$ into $T^*S^{n-1}$. To obtain the symplectic area of the
  $2$-dimensional region bounded by leaves of $\fol,\gol$ crossing $(p_1,q_1)$
  and $(p_3,q_3)$ spanned by $\gamma_x,\gamma_y$, traverse the piecewise-linear
  path $p_1q_1p_3q_3p_1$ connecting $H,K$ and add the distances between
  succesive points with positive sign when reaching $K$ from $H$ and with
  negative sign when travelling the other way. See
  $(\ref{eq:s2w-lagr-rays-h})$, $(\ref{eq:s2w-lagr-integral-h})$ and Theorem
  $\ref{thm:s2w-lagr-geom-h}$.}
  \label{fig:s2w-rays-flatness}
\end{figure}

\section{Two problems on existence of flat bi-Lagrangian structures}
\label{ch:s2w-lagr-problems}

\subsection{Bi-Lagrangian flatness of the space of rays}
\label{ch:s2w-lagr-rays}

This section is a continuation of Example $\ref{ex:s2w-rays}$, which is
concerned with a certain bi-Lagrangian structure $(U,\omega,\fol,\gol)$ over an
open subset $U$ of the ray space $(T^*S^{n-1}, \omega)$, i.e. the symplectic
space of oriented affine lines in $\Rb{n}$, in which each leaf of the two
Lagrangian foliations $\fol,\gol$ is composed of rays passing through a single
point $x$ lying on one of two fixed disjoint hypersurfaces $H,K$. In his
publication \cite[{}I.6]{2-webs} Tabachnikov posed the following question:
\emph{for which hypersurfaces $H,K$ the bi-Lagrangian structure induced on
$T^*S^{n-1}$ by $H,K$ is flat?}

The answer to this question is reachable through raw calculation, yielding
Theorem $\ref{thm:s2w-lagr-rays-flat}$ as a result. However, due to the sheer
difficulty of computations involving the bi-Lagrangian curvature tensor
$(\ref{eq:s2w-lagr-conn-coords-curv})$ for general dimension $n$, it seems
necessary to take an indirect route and split the solution into two parts. The
first part is to solve a simplified problem: \emph{find hypersurface-germs
$H,K$ for which the bi-Lagrangian structure is \emph{Ricci}-flat}, while the
goal of the second part is to eliminate those among Ricci-flat bi-Lagrangian
structures which are not flat. Since a non Ricci-flat structure has non-zero
curvature, the above reasoning suffices to settle the problem. Both parts are
computationally intense in their own way. It is well-advised to verify the
following results with the help of a computer algebra system. The authors
themselves were assisted by \texttt{Wolfram Mathematica 13} \cite{math} in the
process of deriving the next few theorems.

Before giving the statements of the main theorems of this section, let us
simplify the problem by putting the bi-Lagrangian structure of the space of
rays into a more calculation-friendly normal form at generic points $(p,q)\in
U\subseteq T^*S^{n-1}$ of its domain.

The genericity condition in question is: \emph{a ray $(p,q)\in U$ intersects
$H$ and $K$ transversely}. It is not difficult to prove that, given a
bi-Lagangian structure $(U,\omega,\fol,\gol)$ of this kind, the set of rays
satisfying this condition is indeed open and dense in $U$. Let us denote by
$\mathrm{Tv}_H, \mathrm{Tv}_K$ the sets of rays in $U$ having transverse
intersection with $H$ and $K$ respectively. They are easily seen to be open in
$U$. To see that they are dense in $U$, assume that $(p,q)\not\in
\mathrm{Tv}_H$. Then, the ray $\ell$ represented by $(p,q)$ is contained in
some $T_xH$ for $x\in H$. Since $U$ is open, we can rotate the ray $\ell$ by a
small angle about the point $x\in H$ in arbitrary direction, which can be
chosen in such a way that the new ray $\ell'$ with parameters $(p',q')\in U$ is
no longer contained in $T_xH$. Hence, $\ell'$ intersects $T_xH$ at a single
point, namely $x\in H$, and therefore is transverse to $H$. Moreover, since
$(p',q')$ lies in $U$, the ray $\ell'$ intersects both $H$ and $K$ by
definition of the bi-Lagrangian structure. Thus, $\mathrm{Tv}_H$ is open and
dense in $U$, and so is $\mathrm{Tv}_K$ by the same argument. Since an
intersection of two open and dense subsets is open and dense, the rays with the
joint transversality property above are indeed generic.

\begin{lem}
  \label{thm:s2w-lagr-rays-norm}
  Let $H,K$ be two disjoint hypersurfaces in $\Rb{n}$, and let $r\in
  T^*S^{n-1}$ be a ray intersecting both hypersurfaces transversely at
  points $x_0\in H$, $y_0\in K$. There exists an orthonormal coordinate system
  $(x_1,\ldots,x_{n},y_1,\ldots,y_{n})$ of $\Rb{2n}$ in which:
  \begin{enumerate}[label=$(\alph{enumi})$, ref=\alph{enumi}]
    \item $r=(p,q)$, where $q=0$ and $p=(0,0,\ldots,0,1)\in\Rb{n}$,
    \item the germs of $H,K$ at $x_0, y_0$ are graphs of smooth function germs
      $f,g\in C^\infty(\Rb{n-1},0)$ respectively, meaning that the pairs of
      points $x\in H$ and $y\in K$ are parametrized by
      \begin{equation}
        x(s) = (s_1,\ldots,s_{n-1},f(s)),\qquad
        y(t) = (t_1,\ldots,t_{n-1},g(t))
      \end{equation}
      for $s=(s_1,\ldots,s_{n-1})\in\Rb{n-1}$ and
      $t=(t_1,\ldots,t_{n-1})\in\Rb{n-1}$,
    \item the parameters $s,t\in\Rb{n-1}$ form a local coordinate system on
      $T^*S^{n-1}$ by means of a map $(s,t)\mapsto (x(s),y(t))\mapsto r(s,t)$,
      where $r(s,t)\in T^*S^{n-1}$ is the unique ray passing through $x(s)$ and
      $y(s)$,
    \item the symplectic form $\omega$ on $T^*S^{n-1}$ is
      \begin{equation}
        \label{eq:s2w-lagr-rays-norm:omega}
        \begin{aligned}
          \omega &= \sum_{i,j=1}^{n-1}\frac{
            \big(s_i-t_i+\smpd{f}{s_i}(s)\cdot (f(s)-g(t))\big)
            \big(s_j-t_j+\smpd{g}{t_j}(t)\cdot (f(s)-g(t))\big)}
            {\big(\sum_{k=1}^{n-1}(s_k-t_k)^2+(f(s)-g(t))^2\big)^{3/2}}\,ds_i\wedge dt_j
            \\[1ex]
          &\hskip 3em -
          \sum_{i,j=1}^{n-1}\frac{\smpd{f}{s_i}(s)\smpd{g}{t_j}(t)+\delta_{ij}}
            {\big(\sum_{k=1}^{n-1}(s_k-t_k)^2+(f(s)-g(t))^2\big)^{1/2}}\,ds_i\wedge
            dt_j,
        \end{aligned}
      \end{equation}
      where $\delta_{ij}$ is the Kronecker's delta,
    \item the volume form on $T^*S^{n-1}$ induced by $\omega$ is
      $\omega^{n-1} = h(s,t)\,ds_1\wedge\cdots\wedge ds_{n-1}\wedge
      dt_1\wedge\cdots\wedge dt_{n-1}$, where
      \begin{equation}
        \label{eq:s2w-lagr-rays-norm:vol}
        \begin{aligned}
          &h(s,t) =
            \frac{(-1)^{\binom{n}{2}}(n-1)!
              \big(f-g-\smsum{k}\smpd{f}{s_k}\cdot(s_k-t_k)\big)
              \big(f-g-\smsum{k}\smpd{g}{t_k}\cdot(s_k-t_k)\big)}
              {\big(\sum_{k=1}^{n-1}(s_k-t_k)^2+(f-g)^2\big)^{\frac{n+1}{2}}}.
        \end{aligned}
      \end{equation}
  \end{enumerate}
\end{lem}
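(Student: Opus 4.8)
The plan is to reduce the statement to the computations already carried out in Example \ref{ex:s2w-rays}: first put the ambient data into a normal form by a rigid motion of $\Rb{n}$, then substitute graph parametrizations of $H$ and $K$ into the formulas $(\ref{eq:s2w-lagr-rays-omega})$ and $(\ref{eq:s2w-lagr-rays-vol})$ derived there.

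\emph{Normalization.} The bi-Lagrangian structure $(U,\omega,\fol,\gol)$ of Example \ref{ex:s2w-rays} is canonically attached to the pair $(H,K)$ and transforms equivariantly under the isometry group of $\Rb{n}$: a rotation $A\in O(n)$ restricts to a diffeomorphism of $S^{n-1}$ and lifts to a symplectomorphism of $T^*S^{n-1}$, while a translation $x\mapsto x+c$ sends a ray $(p,q)$ to $(p,\,q+c-\langle c,p\rangle p)$, i.e. acts as fibrewise translation of $T^*S^{n-1}$ by the differential of the linear function $p\mapsto\langle c,p\rangle$ on $S^{n-1}$, hence symplectically. Therefore, after replacing the orthonormal coordinates on $\Rb{n}$ by their image under a suitable rigid motion, we may assume that $r$ is the $x_n$-axis traversed in the direction $p=(0,\ldots,0,1)$, with $q=0$; this is condition $(a)$, and it forces $x_0=(0,\ldots,0,a)$ and $y_0=(0,\ldots,0,b)$ with $a\neq b$ (since $x_0\in H$, $y_0\in K$ and $H\cap K=\varnothing$).

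\emph{Graph representations and coordinates.} The ray $r$, now directed along $e_n$, is transverse to $H$ at $x_0$ exactly when $\Rb{1}e_n\cap T_{x_0}H=\{0\}$, that is, $e_n\notin T_{x_0}H$; this is precisely the hypothesis under which the implicit function theorem represents $H$, near $x_0$, as the graph $x_n=f(s_1,\ldots,s_{n-1})$ of a germ $f\in C^\infty(\Rb{n-1},0)$ with $f(0)=a$, and the same reasoning applied to $K$ at $y_0$ gives $g$ with $g(0)=b$, which is $(b)$ with $x(s)=(s_1,\ldots,s_{n-1},f(s))$ and $y(t)=(t_1,\ldots,t_{n-1},g(t))$. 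For $(c)$, I would follow Example \ref{ex:s2w-rays}: since $x(s)\neq y(t)$ for $(s,t)$ near $0$, each such pair determines a unique oriented ray $r(s,t)\in T^*S^{n-1}$ depending smoothly on $(s,t)$, and the pullback of $\omega$ along $(s,t)\mapsto r(s,t)$ equals, as a formal identity valid wherever $x\neq y$, the form $(\ref{eq:s2w-lagr-rays-omega})$; its top power $(\ref{eq:s2w-lagr-rays-vol})$ is nonzero at the origin, since there the determinants $\det(\smpd{x}{s},x_0-y_0)$ and $\det(\smpd{y}{t},x_0-y_0)$ both evaluate to $f(0)-g(0)=a-b\neq 0$. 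Thus $r^*\omega$ is nondegenerate at the origin, $dr_{(0,0)}$ is injective, and since $\dim(H\times K)=2(n-1)=\dim T^*S^{n-1}$ the map $(s,t)\mapsto r(s,t)$ is a local diffeomorphism; as the leaves of $\fol$ (resp. $\gol$) consist of the rays through a fixed point of $H$ (resp. $K$), they are the fibres $\{s=\mathrm{const}\}$ (resp. $\{t=\mathrm{const}\}$), so $T\fol=\bigcap_i\ker dt_i$ and $T\gol=\bigcap_j\ker ds_j$.

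\emph{The two formulas.} For $(d)$ and $(e)$ I would substitute the graph parametrizations into $(\ref{eq:s2w-lagr-rays-omega})$ and $(\ref{eq:s2w-lagr-rays-vol})$, using $x-y=(s_1-t_1,\ldots,s_{n-1}-t_{n-1},f(s)-g(t))$, $dx_i=ds_i$ and $dx_n=d_s f$, $dy_j=dt_j$ and $dy_n=d_t g$, and noting that $\smpd{x}{s}$ (resp. $\smpd{y}{t}$) is the $n\times(n-1)$ matrix obtained by stacking $I_{n-1}$ over the row $(\nabla f)^{T}$ (resp. $(\nabla g)^{T}$); the denominators are powers of $\abs{x-y}=\bigl(\smsum{k}(s_k-t_k)^2+(f-g)^2\bigr)^{1/2}$. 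Expanding $\smsum{i}(x_i-y_i)\,dx_i$, $\smsum{j}(x_j-y_j)\,dy_j$ and $\smsum{i}dx_i\wedge dy_i$ in the new coordinates yields $(\ref{eq:s2w-lagr-rays-norm:omega})$ at once, while the block-determinant identity $\det\left(\begin{smallmatrix}I_{n-1}&b\\ c^{T}&d\end{smallmatrix}\right)=d-c^{T}b$ gives $\det(\smpd{x}{s},x-y)=(f-g)-\smsum{k}\smpd{f}{s_k}(s_k-t_k)$ together with its $y$-analogue, so that $(\ref{eq:s2w-lagr-rays-vol})$, with $(-1)^{n(n-1)/2}=(-1)^{\binom{n}{2}}$, becomes $(\ref{eq:s2w-lagr-rays-norm:vol})$. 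No step here presents a genuine obstacle; the two points that need care are keeping the argument for $(c)$ non-circular — the expression for $\omega$ must be invoked as an identity of pulled-back $2$-forms before $(s,t)$ is known to be a coordinate system — and the sign bookkeeping in $(e)$.
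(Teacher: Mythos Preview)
Your proposal is correct and follows essentially the same route as the paper's own proof: normalize by a rigid motion, use transversality to write $H$ and $K$ as graphs, and then plug the resulting parametrizations into the formulas $(\ref{eq:s2w-lagr-rays-omega})$ and $(\ref{eq:s2w-lagr-rays-vol})$ from Example~\ref{ex:s2w-rays}. Your write-up is in fact more explicit than the paper's in justifying why rigid motions act symplectically on $T^*S^{n-1}$ and why $(s,t)\mapsto r(s,t)$ is a local diffeomorphism; the only slip is the sentence about the foliations, where you say the leaves of $\fol$ are $\{s=\mathrm{const}\}$ but then conclude $T\fol=\bigcap_i\ker dt_i$ --- one of the two should be swapped, though the final identifications you land on agree with the paper and nothing downstream depends on it.
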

\begin{proof}
  Using rigid motions $R\in\mathrm{SO}(n,\Rb{1})$ we can arrange the initial
  coordinates $(x_1,\ldots,x_n)$ in such a way that the ray which connects the
  two given points on hypersurfaces $H,K$ has parameters $p_0=(0,0,\ldots,0,1)$
  and $q_0=(0,0,\ldots,0,0)$. In these coordinates $T_{x_0}H\oplus\langle
  p_0\rangle = T_{x_0}\Rb{n}$ and $T_{y_0}K\oplus\langle p_0\rangle =
  T_{y_0}\Rb{n}$, hence the projections from $H$ and $K$ onto the first $n-1$
  coordinates is a local diffeomorphism. This proves that the germs of
  hypersurfaces $H,K$ are given by the graphs of smooth function-germs
  $\maps{f,g}{(\Rb{n-1},0)}{\Rb{1}}$ respectively, and, according to Example
  $\ref{ex:s2w-rays}$, the space of rays intersecting both hypersurfaces $M$
  can be parametrized by pairs of points $x\in H,y\in K$, each dependent on the
  set of $n-1$ parameters $(s_1,\ldots,s_{n-1})$ and $(t_1,\ldots,t_{n-1})$
  with $x_i = s_i, y_j = t_j$ for $i,j=1,\ldots,n-1$ and $x_n=f(s),y_n=g(t)$.
  The formula $(\ref{eq:s2w-lagr-ex-A})$ for the symplecitc form $\omega$ on $M$
  expands to $(\ref{eq:s2w-lagr-rays-norm:omega})$, while the induced volume
  form $\omega^{n-1}$ given by $(\ref{eq:s2w-lagr-rays-vol})$ becomes exactly
  $(\ref{eq:s2w-lagr-rays-norm:vol})$.
\end{proof}

Now, our global problem reduces to the following local one: \emph{find two
function-germs $f,g\in C^\infty(\Rb{n-1},0)$ that satisfy a set of partial
differential equations expressing the vainshing of the bi-Lagrangian (Ricci)
curvature associated with bi-Lagrangian structure
$(\Rb{2n-2},\omega,\fol,\gol)$ in standard coordinates
$(s_1,\ldots,s_{n-1},t_1,\ldots,t_{n-1})$, where $T\fol = \bigcap_{i=1}^{n-1}
\ker dt_i$, $T\gol = \bigcap_{j=1}^{n-1} \ker ds_j$ and $\omega$ is given by
$(\ref{eq:s2w-lagr-rays-norm:omega})$.} It is known (Proposition
$\ref{thm:dfw-ricci}$, see also \cite{dfw, vaisman}) that the Ricci tensor of
the bi-Lagrangian connection $\nabla$ is exactly
\begin{equation}
  \label{eq:s2w-lagr-rays:ricci}
  \mathrm{Rc} = \sum_{i,j=1}^{n-1}
  \kappa_{ij}(s,t)\,ds_idt_j,\quad\text{where}\quad\kappa_{ij} =
  \pdd{\log\abs{h}}{s_i}{t_j},\quad i,j=1,2,\ldots,n-1,
\end{equation}
for the smooth function $h(s,t)$ given in
$(\ref{eq:s2w-lagr-rays-norm:vol})$. The resulting system of PDEs is severely
overdetermined for large $n$, with $(n-1)^2$ generically independent equations
constraining $2(n-1)$ variables. Our intuition might suggest that in this case
solutions to this system should not exist for almost all $n$. Indeed, a formal
reasoning leads exactly to this conclusion.

\begin{thm}
  \label{thm:s2w-lagr-rays-main}
  Let $n\neq 3$ be a natural number. For any two disjoint hypersurfaces $H,K$
  in $\Rb{n}$ and any ray $(p_0,q_0)\in T^*S^{n-1}$ intersecting both
  hypersurfaces transversely at points $x_0\in H$, $y_0\in K$, the canonical
  connection $\nabla$ associated with the germ at $(p_0,q_0)$ of the
  bi-Lagrangian structure on $T^*S^{n-1}$ induced by the germs of $H,K$ at
  $x_0,y_0$ respectively is not Ricci flat.
\end{thm}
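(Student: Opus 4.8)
The plan is to convert Ricci-flatness into a polynomial condition on the jets of the graphing functions $f,g$ of $H,K$ at the base point, and then to show — with the help of a computer algebra system — that this condition is unsatisfiable when $n\neq 3$. By Proposition~\ref{thm:dfw-ricci} together with $(\ref{eq:s2w-lagr-rays:ricci})$ and Lemma~\ref{thm:s2w-lagr-rays-norm}, the canonical connection $\nabla$ is Ricci-flat exactly when $\partial^2\log\abs{h}/\partial s_i\,\partial t_j\equiv 0$ for all $i,j$, where $h(s,t)$ is the explicit function $(\ref{eq:s2w-lagr-rays-norm:vol})$; equivalently, $\log\abs{h(s,t)}$ must split as $\phi(s)+\psi(t)$, i.e. $h$ factors up to sign as a product of a function of $s$ and a function of $t$. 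Arguing by contradiction, I would assume Ricci-flatness and first exploit the residual symmetries left after Lemma~\ref{thm:s2w-lagr-rays-norm} — translation of $\Rb{n}$ along the ray (to arrange $f(0)=0$), an overall rescaling (to normalise $g(0)$), and rotations of the $(s_1,\dots,s_{n-1})$-coordinates — to bring the jets of $f,g$ into the simplest form near the base point.

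Next I would Taylor-expand the equations $\partial^2\log\abs{h}/\partial s_i\,\partial t_j=0$ at $(s,t)=(0,0)$. Writing $f_i=\partial f/\partial s_i(0)$, $f_{ij}=\partial^2 f/\partial s_i\partial s_j(0)$ and likewise for $g$, the zeroth-order part already gives, after clearing denominators, the linear relations $g(0)\,(g_{ij}-f_{ij})+(n+1)(\delta_{ij}-f_ig_j)=0$; its antisymmetric part reduces to $f_ig_j=f_jg_i$, forcing $\nabla f(0)$ and $\nabla g(0)$ to be proportional (geometrically: the normals of $H$ and $K$ at the two points of the ray are coplanar with the ray's direction), so I may rotate both gradients onto the $s_1$-axis. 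As the introduction notes, the curvature of $\nabla$ is a rational expression in the derivatives of $x(s),y(t)$ of order at most four, so the first- and second-order Taylor coefficients of the equations yield a finite further list of polynomial constraints involving the $2$-, $3$- and $4$-jets of $f,g$, with $n$ entering all of them polynomially, only through the exponent $\tfrac{n+1}{2}$ in $(\ref{eq:s2w-lagr-rays-norm:vol})$. To keep the size of the system uniformly bounded in $n$, I would restrict the separation condition to the two-dimensional coordinate slices $\{s_2=\dots=s_{n-1}=0,\ t_2=\dots=t_{n-1}=0\}$ together with a handful of analogous low-dimensional slices (the latter to capture the off-diagonal entries $\kappa_{ij}$); on such a slice $h$ has the same functional shape as in the planar case $n=2$, but with $\tfrac{n+1}{2}$ in place of $\tfrac32$, so that only finitely many one-variable Taylor coefficients of $f,g$ occur regardless of $n$.

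The final step is to hand the resulting system — a bounded number of polynomial equations in a bounded number of jet variables, with $n$ kept as a symbolic parameter — to a computer algebra system and eliminate the jet variables by a Gr\"obner basis or successive resultants. I expect the elimination to collapse to a nonzero univariate relation in $n$ whose only root compatible with the construction (an integer $n\geq 2$) is $n=3$; for every other $n$ the system is inconsistent, so no Ricci-flat structure can exist, which is the assertion. The main obstacle is precisely this computation: arranging the elimination so that it terminates and produces a manageable answer, and keeping track of the low-order degeneracies, in particular the case $\nabla f(0)=0$ (equivalently $\nabla g(0)=0$), which must be handled separately lest a branch of solutions be overlooked. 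The planar case $n=2$, where a naive dimension count is misleading (there $\nabla$ has the single curvature coefficient $(\ref{eq:s2w-rays-curv})$, which nonetheless never vanishes), can be dispatched by the same Taylor-expansion technique applied directly to $(\ref{eq:s2w-rays-curv})$.
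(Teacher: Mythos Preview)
Your broad strategy---convert Ricci-flatness into polynomial constraints on the jets of $f,g$ via $(\ref{eq:s2w-lagr-rays-norm:vol})$ and $(\ref{eq:s2w-lagr-rays:ricci})$, then eliminate with a CAS---is also the paper's. The execution differs in a way worth noting: rather than Taylor-expanding at the single point $(s,t)=(0,0)$, the paper evaluates $\partial^{j+k}\kappa_{ii}/\partial s_i^j\partial t_i^k$ along the \emph{diagonal} $t=s$ and passes to the variables $\rho=f-g$, $\sigma=f+g$. The resulting expressions are rational in $\rho,\rho_1,\rho_2,\ldots,\sigma_1,\sigma_2,\ldots$ with denominator a pure power of $\rho$; clearing it gives polynomial identities $c_{jk}(s)=0$ holding for \emph{all} $s$, which can then be differentiated in $s$ to generate further relations. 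This organizes the algebra so that higher $\rho_k,\sigma_k$ are successively solved for in terms of $\rho,\rho_1,\sigma_1,\sigma_2$ alone. Your base-point expansion would in principle recover the same information, but the diagonal/$(\rho,\sigma)$ device is what keeps the computation tractable.

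The real gap is your expectation that the elimination collapses to a single univariate relation in $n$ whose only admissible root is $3$. It does not. What the paper finds is a case analysis driven by auxiliary non-vanishing lemmas for $\rho_1$, $\sigma_1$, $\sigma_2$, and these lemmas have their own exceptional dimensions: the argument that $\sigma_1\not\equiv 0$, for instance, rests on an identity carrying the factor $(n-5)(n-3)$, so the generic chain of deductions simply breaks at $n=5$. That case then needs a genuinely different argument (the paper derives $12\sigma_1^2+\rho^2\sigma_2^2=0$, hence $\sigma_1\equiv 0$, and reduces to an explicit linear ODE for $\rho^{5/2}$ with only the trivial solution). Your Gr\"obner/resultant scheme would, at this point, acquire a component at $n=5$ that is not by itself contradictory, and you would discover you need new relations there---something your proposal explicitly bets against. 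So as written this is a reasonable outline but not a proof: the substantive content, including the discovery that $n=5$ is special and the separate treatment it requires, is exactly the work you have deferred to the machine without evidence that it terminates in the form you predict.
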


\begin{proof}
  Let $\nabla$ be the bi-Lagrangian connection on the space of rays. Find an
  orthogonal coordinate system $(x_1,\ldots,x_n)$ such that the hypersurfaces
  $H,K$ are given by the graphs of smooth function-germs
  $\maps{f,g}{(\Rb{n-1},0)}{\Rb{1}}$ parametrized by
  $(s_1,\ldots,s_{n-1})\in\Rb{n-1}$ and $(t_1,\ldots,t_{n-1})\in\Rb{n-1}$ as in
  Lemma $\ref{thm:s2w-lagr-rays-norm}$. Denote the coefficients of its Ricci
  tensor $\mathrm{Rc}=\sum_{i,j=1}^{n-1} \kappa_{ij}(s,t)\,ds_idt_j$
  $(\ref{eq:s2w-lagr-rays:ricci})$ by $\kappa_{ij}\in C^\infty(\Rb{2n-2},0)$.
  Next, let $\kappa_{ij}(s,t)$ be as in $(\ref{eq:s2w-lagr-rays:ricci})$, fix
  $i=1,2,\ldots,n-1$ and let
  \begin{equation}
    \tilde{c}_{jk}(s)=\frac{\partial^{j+k}\kappa_{ii}}{\partial s_i^j\partial
    t_i^k}_{|(s,s)}
  \end{equation}
  for each $j,k=1,2,\ldots,n-1$. Express these quantities with the help of
  auxiliary functions
  \begin{equation}
    \begin{aligned}
      \sigma(s)&=f(s)+g(s),&\qquad
      \sigma_k(s) &= \smfrac{\partial^k\sigma}{\partial s_i^k}(s),\\
      \rho(s)&=f(s)-g(s),&\qquad
      \rho_k(s) &= \smfrac{\partial^k\rho}{\partial s_i^k}(s).
    \end{aligned}
  \end{equation}
  It turns out that that $\tilde{c}_{jk}$ for $j,k=0,1,2,3$ are rational
  functions of $\rho,\sigma$ and their derivatives $\rho_m,\sigma_m$ of order
  $m\in\mathbb{N}$, the denominators of which equal to $\rho^{j+k+2}$. The
  function $\rho$ is nonvanishing since the two hypersurface-germs $H,K$ do not
  intersect. Let
  \begin{equation}
    c_{jk} = \rho^{j+k+2}\tilde{c}_{jk} \quad \text{for }j,k\in\mathbb{N}.
  \end{equation}
  To prove the theorem it is enough to show that a
  pair of functions $(\sigma,\rho)$ with $\rho$ nonvanishing which satisfies
  $c_{jk}=0$ for each $j,k\in\mathbb{N}$ does not exist. The first of these
  equations is
  \begin{equation}
    0 = c_{00} = (1+n)(4+\rho_1^2-\sigma_1^2)+4\rho\rho_2.
  \end{equation}
  Note that it allows us to determine $\rho_2$ in terms of $\rho,\rho_1$ and
  $\sigma_1$, namely
  \begin{equation}
    \rho_2 = \frac{(1+n)(\sigma_1^2-\rho_1^2 - 4)}{4\rho}
  \end{equation}
  We arrive at a similar situation if we examine equations of the form
  $c_{jk}+c_{kj}=0$ and $c_{jk}-c_{kj}=0$ for pairs of positive indices $(j,k)$
  satisfying $j\cdot k = 0$ and $\max(k,l)\leq 3$. These equations again let us
  write the derivatives $\rho_k,\sigma_k$ of $\rho$ and $\sigma$ for $3\leq
  k\leq 5$ in terms of the lower ones and, ultimately, in terms of
  $\rho,\rho_1,\sigma_1$ and $\sigma_2$. Even in their fully reduced form they
  may look quite intimidating, hence we state only the first three for brevity.
  \begin{align}
    0 = c_{10}+c_{01}
      &= 4\rho^2\rho_3-\smfrac12(1+n)\Big(
        (3+n)\rho_1(4+\rho_1^2-\sigma_1^2) + 4\rho\sigma_1\sigma_2\Big),\\
    0 = c_{10}-c_{01}
      &= \smfrac12\Big(
        (1+n)\big(4(n-11)+(n-3)\rho_1^2\big)\sigma_1\notag \\
      &\hskip 3em -(n-3)(n+1)\sigma_1^3 +
        4\rho\big((n+7)\rho_1\sigma_2+2\rho\sigma_3\big)\Big),\\
    0 = c_{20}+c_{02}
      &= \smfrac{1}{2} \Big(2 (n+1) \rho_1^2 \big(2 n (n+20)+6-(11 n+3)
      \sigma_1^2\big)\notag\\
      &\hskip 3em {}+8 (n+1) (n+6) \rho \rho_1 \sigma_1
      \sigma_2+(n+1) (n (n+14)+9) \rho_1^4\notag\\
      &\hskip 3em {}+(n+1) \Big(4 ((n-22) n+49) \sigma_1^2+96 (n-1)-((n-8) n+3)
      \sigma_1^4\Big)\notag\\
      &\hskip 3em {}+16 \rho^2 \big(\rho
      \rho_4+3 \sigma_2^2\big)\Big).
  \end{align}
  Note that the term $\sigma_2^2$ can also be reduced to an expression
  involving only $\rho,\rho_1,\sigma_1$ by means of the equation
  \begin{equation}
    \label{eq:s2w-lagr-rays-main:s22}
    \begin{aligned}
      0 = c_{11} &=
        \smfrac{(n+1)(n-3)(n-5)}8\Big(\rho_1^4-8\sigma_1^2+\sigma_1^4-2\rho_1^2(\sigma_1^2-4)+16\Big)\\
      &\hskip 3em -2\Big(24(n+1)\sigma_1^2+(n+7)\rho^2\sigma_2^2\Big).
    \end{aligned}
  \end{equation}
  With the help of these eight equations we bring all expressions $c_{jk}$ with
  $j,k=0,1,2,3$ to polynomials in variables $\rho,\rho_1,\sigma_1$ and
  $\sigma_2$ with coefficients depending on the dimension $n$, where
  $\sigma_2$ occurs only in its first power.

  To progress further we need several auxiliary lemmas.
  \begin{enumerate}[label=$(\arabic{enumi})$, ref=\arabic{enumi}]
    \item\label{thm:s2w-lagr-rays-main:lem-rs0}
      \emph{If $n\neq 3$, then for every $s\in\Rb{n-1}$ near the origin the
      values $\sigma_1(s)$ and $\rho_1(s)$ cannot both be zero.}
  \end{enumerate}
  \emph{Proof.}\hskip 1em
  Assume the contrary, that for some $s\in\Rb{n-1}$ we have
  $\sigma_1=\rho_1=0$. Then equalities $c_{11}=0$ and $c_{22}=0$ at
  $s\in\Rb{n-1}$ give
  \begin{equation}
    \begin{aligned}
      0 &= -(n+7) \rho^2 \sigma_2^2+n^3-7 n^2+7 n+15, \\
      0 &= -9 (n+7) \rho^2 \sigma_2^2+ n^3+ 21 n^2-169 n+291
    \end{aligned}
  \end{equation}
  respectively. Combining them yields a polynomial equation in $n$
  \begin{equation}
    2n^3-21n^2+58n-39=0
  \end{equation}
  with roots $n=1,3,\smfrac{13}{2}$. Since $n\in\mathbb{N}$, $n\geq 2$ and
  $n\neq 3$ by assumption, we arrive at a contradiction.
  Therefore $(\rho_1(s),\sigma_1(s))\neq (0,0)$ at each $s\in\Rb{n-1}$ in the
  domain of $\rho,\sigma$.
  \begin{enumerate}[label=$(\arabic{enumi})$, ref=\arabic{enumi}]
    \setcounter{enumi}{1}
      \item\label{thm:s2w-lagr-rays-main:lem-r0}
      \emph{If $n\neq 3$, then the function $\rho_1$ is nonvanishing.}
  \end{enumerate}
  \emph{Proof.}\hskip 1em
  Assume that $\rho_1=0$ at some point $s\in\Rb{n-1}$. Then $c_{12}-c_{21}=0$
  at $s\in\Rb{n-1}$ gives
  \begin{equation}
    0 = \big(n^2-8 n+15\big) \sigma_1^5-16 \big(n^2-5 n+54\big) \sigma_1^3
      +48 \big(n^2-12n+27\big) \sigma_1.
  \end{equation}
  Hence either $\sigma_1=0$, by which we obtain $\rho_1=\sigma_1=0$
  contradicting Lemma $(\ref{thm:s2w-lagr-rays-main:lem-rs0})$, or
  \begin{equation}
    \label{eq:s2w-lagr-rays-main:lr0-1}
    0 = \big(n^2-8 n+15\big) \sigma_1^4-16 \big(n^2-5 n+54\big) \sigma_1^2
      +48 \big(n^2-12n+27\big).
  \end{equation}
  Now, take $c_{22}=0$. This yields
  \begin{equation}
    \label{eq:s2w-lagr-rays-main:lr0-2}
    \begin{aligned}
      0&=\big(n^4-11 n^3+68 n^2-169 n+111\big) \sigma_1^6
        + 12 \big(16 n^3-41 n^2+500 n-1243\big) \sigma_1^4 \\
        &{}- 16 \big(n^4+23 n^3-186 n^2+1525 n-3667\big) \sigma_1^2
        + 192 \big(2 n^3-21 n^2+58 n-39\big).
    \end{aligned}
  \end{equation}
  It can be verified directly that equations
  $(\ref{eq:s2w-lagr-rays-main:lr0-1})$ and
  $(\ref{eq:s2w-lagr-rays-main:lr0-2})$ have no common zeroes, a contradiction.
  This proves that $\rho_1\neq 0$ at each $s\in\Rb{n-1}$.
  \begin{enumerate}[label=$(\arabic{enumi})$, ref=\arabic{enumi}]
    \setcounter{enumi}{2}
    \item\label{thm:s2w-lagr-rays-main:lem-s0}
      \emph{If $n\neq 3,5$, then the function $\sigma_1$ is nonvanishing.}
  \end{enumerate}
  \emph{Proof.}\hskip 1em
  Assume that $\sigma_1=0$ at some point $s\in\Rb{n-1}$. The equation
  $c_{12}+c_{21}=0$ at $s\in\Rb{n-1}$ is equivalent to
  \begin{equation}
    0 = (n-5) (n-3) \rho_1 (\rho_1^2+4)^2.
  \end{equation}
  Since $n\neq 3,5$ by assumption, we get $\rho_1=0$ at $s\in\Rb{n-1}$, which
  leads to a contradiction with Lemma $(\ref{thm:s2w-lagr-rays-main:lem-r0})$.
  Hence $\sigma_1\neq 0$ everywhere.
  \begin{enumerate}[label=$(\arabic{enumi})$, ref=\arabic{enumi}]
    \setcounter{enumi}{3}
    \item\label{thm:s2w-lagr-rays-main:lem-s20}
      \emph{If $n\neq 3,5$, then the function $\sigma_2$ is nonvanishing.}
  \end{enumerate}
  \emph{Proof.}\hskip 1em
  Assume that $\sigma_2=0$ at some $s\in\Rb{n-1}$. Equation $c_{12}+c_{21}=0$
  expands to
  \begin{equation}
    0 = \rho_1(n+1)\Big(48 \sigma_1^2-\smfrac{1}{16} (n-5)
        (n-3) (n+3) \big(-2 \rho_1^2 (\sigma_1^2-4)+\rho_1^4+\sigma_1^4-8
        \sigma_1^2+16\big)\Big).
  \end{equation}
  Since $\rho_1\neq 0$ by Lemma $(\ref{thm:s2w-lagr-rays-main:lem-r0})$, we can
  divide both sides by $\rho_1(n+1)$ to arrive at
  \begin{equation}
    768 \sigma_1^2 = (n-5)
        (n-3) (n+3) \big(-2 \rho_1^2 (\sigma_1^2-4)+\rho_1^4+\sigma_1^4-8
        \sigma_1^2+16\big).
  \end{equation}
  On the other hand, the equality $c_{11}=0$ gives
  \begin{equation}
    384 \sigma_1^2 = (n-5) (n-3) \big(-2 \rho_1^2 (\sigma_1^2-4)+\rho_1^4
      + \sigma_1^4-8 \sigma_1^2+16\big).
  \end{equation}
  Combining the two we get $(n+3)\sigma_1^2 = 2 \sigma_1^2$, which due to
  $n\geq 2$ implies $\sigma_1=0$. This cannot happen by Lemma
  $(\ref{thm:s2w-lagr-rays-main:lem-s0})$. The above contradiction proves that
  $\sigma_2$ does not vanish.

  \vspace*{1em}
  We now proceed to prove the theorem in cases $n\neq 3,5$. Note that by
  eliminating variables $\rho_3,\sigma_3,\sigma_2^2$ from the equations $c_{12}=0$
  and $c_{21}=0$ we can reduce them to the form
  \begin{align}
    \label{eq:s2w-lagr-rays-main:ab12}a_{12} + b_{12}\cdot \rho\sigma_2 &= 0, \\
    \label{eq:s2w-lagr-rays-main:ab21}a_{21} + b_{21}\cdot \rho\sigma_2 &= 0
  \end{align}
  respectively, where $a_{12},a_{21},b_{12},b_{21}$ are polynomials in
  $\sigma_1,\rho_1$ with coefficients depending on the dimension $n$.
  Multiply both sides of $(\ref{eq:s2w-lagr-rays-main:ab12})$ by $b_{21}$
  and of $(\ref{eq:s2w-lagr-rays-main:ab21})$ by $b_{12}$. By taking the
  difference of the resulting expressions we arrive at the equality
  \begin{equation}
    \label{eq:s2w-lagr-rays-main:eq1}
    a_{12}b_{21}-a_{21}b_{12}=0
  \end{equation}
  which holds at each point $s\in\Rb{n-1}$ inside the domain of
  $\sigma_1,\rho_1$.

  Now, if we multiply both sides of $(\ref{eq:s2w-lagr-rays-main:ab12})$ by
  $a_{12}-b_{12}\cdot\rho\sigma_2$ and $(\ref{eq:s2w-lagr-rays-main:ab21})$ by
  $a_{21}-b_{21}\cdot\rho\sigma_2$, we will obtain
  \begin{equation}
    (a_{12}^2-a_{21}^2) - (b_{12}^2-b_{21}^2)\cdot\rho^2\sigma_2^2 = 0
  \end{equation}
  by again taking the difference of the results. After using
  $(\ref{eq:s2w-lagr-rays-main:s22})$ to express $\rho^2\sigma_2^2$ in terms of
  $n,\sigma_1,\rho_1$ we arrive at an equivalent equality
  \begin{equation}
    \label{eq:s2w-lagr-rays-main:eq2}
    \begin{aligned}
      &(a_{12}^2-a_{21}^2) - (b_{12}^2-b_{21}^2)\cdot \smfrac{n+1}{16(n+7)}
        \Big((n-5) (n-3) \\
      &\hskip 4em \cdot\big(-2 \rho_1^2 (\sigma_1^2-4)+\rho_1^4+\sigma_1^4-8
        \sigma_1^2+16\big)-384 \sigma_1^2\Big) = 0.
    \end{aligned}
  \end{equation}

  Both equations $(\ref{eq:s2w-lagr-rays-main:eq1})$ and
  $(\ref{eq:s2w-lagr-rays-main:eq2})$ involve polynomials in two variables
  $\sigma_1,\rho_1$ with coefficients depending on the dimension $n$. It can be
  verified directly (although this can be infeasible to do by hand) that the
  system of these two equations has only a finite number of solutions for a
  fixed dimension $n\geq 2$. In particular, the values of $\sigma_1$ at each
  point $s\in\Rb{n-1}$ in its domain belong to a finite set. Due to assumed
  smoothness of $\sigma_1$, this function has to be constant by continuity,
  hence $\sigma_2=0$. In dimensions $n\neq 3,5$ this leads to a contradiction
  with Lemma $(\ref{thm:s2w-lagr-rays-main:lem-s20})$, proving that the Ricci
  curvature cannot be null.

  \vspace*{1em}
  Now, let us consider the remaining case $n=5$. In the following we will make
  use of notations introduced at the beginning of the proof. Consider
  $c_{00}=0$, namely
  \begin{equation}
    \label{eq:s2w-lagr-rays-main:n5c00}
    0=12+3\rho_1^2+2\rho\rho_2-3\sigma_1^2,
  \end{equation}
  and differentiate its both sides with respect to $s_i$ to obtain
  \begin{equation}
    0 = 4\rho_1\rho_2+\rho\rho_3-3\sigma_1\sigma_2.
  \end{equation}
  Take the equality $c_{11}=0$, which expands to
  \begin{equation}
    \begin{aligned}
      0 &= -12 \rho^2 \sigma_2^2-2 \rho_1^2 \big(10 \rho \rho_2+9
      (\sigma_1^2-4)\big)-8 \rho \rho_1 \big(\rho \rho_3-3 \sigma_1
      \sigma_2\big) \\
      &\hskip 8em {}+\big(2 \rho \rho_2-3 \sigma_1^2\big)^2+24 \big(2 \rho
      \rho_2-9 \sigma_1^2+6\big)+9 \rho_1^4,
    \end{aligned}
  \end{equation}
  and note that if we let $q(s)=c_{11}(s)$, $p_1(s) = c_{00}(s)$ and
  $p_2(s)=(\basis{s_i}c_{00})(s)$, then
  \begin{equation}
    q(s) = (60+3\rho_1^2+2\rho\rho_2-3\sigma_1^2)\cdot p_1(s) -
    (8\rho\rho_1)\cdot p_2(s)
    -12(48+12\rho_1^2+8\rho\rho_2+\rho^2\sigma_2^2).
  \end{equation}
  Hence,
  \begin{equation}
    48+12\rho_1^2+8\rho\rho_2+\rho^2\sigma_2^2 = 0,
  \end{equation}
  which together with $(\ref{eq:s2w-lagr-rays-main:n5c00})$ gives
  \begin{equation}
    12\sigma_1^2+\rho^2\sigma_2^2=0.
  \end{equation}
  The equality says that a sum of two positive real-valued functions
  $12\sigma_1^2$ and $\rho^2\sigma_2^2$ must be zero at all arguments
  $s\in\Rb{4}$. This implies
  \begin{equation}
    \sigma_1=\basis{s_i}\sigma(s)=0
  \end{equation}
  for each $s\in\Rb{4}$ inside its domain and for each $i=1,2,3,4$. With this
  in mind, the equalities $\rho^2\kappa_{ij}(s,s) = 0$ for $i,j=1,2,3,4$ become
  \begin{equation}
    \begin{aligned}
      0 &= 12 + 3(\basis{s_i}\rho(s))^2 + 2\rho(s)((\basis{s_i})^2\rho(s))&&\text{for
      }i=j,\\
      0 &= 3(\basis{s_i}\rho(s))(\basis{s_j}\rho(s)) +
        2\rho(s)(\basis{s_i}\basis{s_j}\rho(s))&&\text{for }i\neq j.
    \end{aligned}
  \end{equation}
  Without loss of generality we assume that $\rho>0$. In this case
  by making the substitution $y(s) = \rho(s)^{5/2}$ the above system of
  differential equations for $\rho$ reduces to
  \begin{align}
    \label{eq:s2w-lagr-rays-main:y}
    0 &= (\basis{s_i})^2y(s) + 15y(s), \\
    0 &= \basis{s_i}\basis{s_j}y(s) \quad \text{for }i\neq j.
  \end{align}
  To see that this system has only trivial solutions, take the derivative of
  the first equation with respect to $s_j$ and of the second one with respect
  to $s_i$. We get
  \begin{equation}
    \begin{aligned}
      0 &= (\basis{s_i})^2\basis{s_j}y(s) + 15\basis{s_j}y(s), \\
      0 &= (\basis{s_i})^2\basis{s_j}y(s) \quad \text{for }i\neq j.
    \end{aligned}
  \end{equation}
  As a consequence we obtain $\basis{s_j}y(s)$ for each $j=1,2,3,4$. Therefore
  there exists $C\in\Rb{1}$ such that $y(s)=C$ at each $s\in\Rb{s-1}$.
  Inserting this into $(\ref{eq:s2w-lagr-rays-main:y})$ we obtain $y(s) = 0$.
  Since $y(s)=\rho(s)^{5/2}$, we obtain $\rho(s)=0$, which contradicts the fact
  that the hypersurfaces $H,K$ do not intersect. We have proved that the Ricci
  tensor of the bi-Lagrangian connection $\nabla$ cannot be everywhere zero in
  dimension $n=5$.
\end{proof}

This settles the existence problem in cases $n\neq 3$. The remaining case $n=3$
turns out to be more interesting.

\begin{thm}
  \label{thm:s2w-lagr-rays-main-n3}
  Let $n=3$. There exist pairs of disjoint surface germs $H,K$ in $\Rb{3}$
  which induce a bi-Lagrangian structure on the space of rays $T^*S^{2}$ that
  is Ricci flat. Such pairs $H,K$ are exactly the pairs of disjoint germs of a
  single sphere $S_{c,r}^2\subseteq\Rb{3}$ with arbitrary radius $r>0$ and
  center $c\in\Rb{3}$, that is,
  \begin{equation}
    S_{c,r}^2 = \smset{x\in\Rb{3} : \abs{x-c}=r},\qquad H=(S_{c,r}^2,x_0),
      \quad K=(S_{c,r}^2,y_0),
  \end{equation}
  for some $x_0,y_0\in S_{c,r}^2$ with $x_0\neq y_0$. Nevertheless, the induced
  bi-Lagrangian structure is never flat.
\end{thm}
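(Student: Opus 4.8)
The plan is to prove the three assertions of the statement separately. Throughout I work in the normal form of Lemma $\ref{thm:s2w-lagr-rays-norm}$, so that $H,K$ are graphs of $f,g\in C^\infty(\Rb{2},0)$, and I use that by Proposition $\ref{thm:dfw-ricci}$ (cf.\ $(\ref{eq:s2w-lagr-rays:ricci})$) Ricci flatness of $\nabla$ is equivalent to the vanishing of $\kappa_{ij}=\smpdd{\log\abs h}{s_i}{t_j}$ for $i,j=1,2$, that is, to the function $h$ of $(\ref{eq:s2w-lagr-rays-norm:vol})$ splitting as a product $h(s,t)=F(s)G(t)$. For the \emph{first} assertion (a common sphere gives a Ricci-flat structure) I would argue coordinate-freely from $(\ref{eq:s2w-lagr-rays-vol})$, which for $n=3$ reads $h=-2\,\det(\smpd xs,x-y)\det(\smpd yt,x-y)\abs{x-y}^{-4}$. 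Writing $N_H(x),N_K(y)$ for the unit normals and $J_H(s),J_K(t)$ for the Jacobians of the area elements of $H,K$, one has $\det(\smpd xs,x-y)=\pm\langle N_H(x),x-y\rangle\,J_H(s)$, and similarly for $K$. The key elementary identity is that if $x$ and $y$ both lie on $S_{c,r}^2$ then
\[
  \langle N_H(x),x-y\rangle=\smfrac1r\langle x-c,x-y\rangle
    =\smfrac1r\big(r^2-\langle x-c,y-c\rangle\big)=\smfrac1{2r}\abs{x-y}^2,
\]
using $\abs{x-y}^2=2r^2-2\langle x-c,y-c\rangle$, and the same holds with $H,K$ and $x,y$ interchanged. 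Substituting, all powers of $\abs{x-y}$ cancel \emph{exactly} when $n=3$, leaving $h=\pm\smfrac1{2r^2}J_H(s)J_K(t)$, so $\log\abs h$ is a sum of a function of $s$ and a function of $t$ and $\nabla$ is Ricci flat. This also explains why the hypotheses are sharp: in general $h\propto\abs{x-y}^{3-n}J_H(s)J_K(t)$, which does not separate when $n\neq 3$ (as it must not, by Theorem $\ref{thm:s2w-lagr-rays-main}$), and the identity above fails as soon as $x$ and $y$ lie on \emph{different} spheres.

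For the \emph{converse} I would imitate the proof of Theorem $\ref{thm:s2w-lagr-rays-main}$: with $\sigma=f+g$ and $\rho=f-g$, restricting the mixed derivatives $\partial_{s_i}^j\partial_{t_i}^k\kappa_{ii}$ to the diagonal $\{s=t\}$ and clearing the nonvanishing denominators (powers of $\rho$) produces polynomial relations $c_{jk}=0$ in the jets of $\rho,\sigma$. The decisive point is that at $n=3$ the equation $c_{11}=0$ collapses to the sum of squares $192\,\sigma_1^2+20\,\rho^2\sigma_2^2=0$, which forces $\partial_{s_i}\sigma\equiv 0$ for every $i$, i.e.\ $\sigma=f+g$ is constant. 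Feeding $\sigma\equiv\mathrm{const}$ back into the remaining relations $c_{jk}=0$, together with the off-diagonal conditions $\kappa_{ij}=0$ ($i\neq j$) that control the mixed second derivatives of $\rho$, should then force $\tfrac14\rho^2+\abs{s-c'}^2=r^2$ for constants $c'\in\Rb{2}$, $r>0$; since $f=\tfrac12(\sigma+\rho)$ and $g=\tfrac12(\sigma-\rho)$, this says precisely that $H$ and $K$ both lie on the single sphere $S_{c,r}^2$ with $c=(c',\tfrac12\sigma)$. The hard part here will be the off-diagonal step and the passage from ``the jet of $\rho$ is spherical'' to ``$\rho$ is spherical'' (by a first integral of the reduced system, or by uniqueness of its solutions); it is at this point that a computer algebra system is indispensable, both to carry out the eliminations and to rule out spurious algebraic branches and degenerate loci.

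Finally, to see that these Ricci-flat examples are never flat, I would invoke Theorem $\ref{thm:s2w-lagr-fg}$: $\nabla$ is flat if and only if the matrix $A(s,t)$ of $\omega$ factors as a product of a matrix depending only on $s$ and one depending only on $t$. Since $\det A$ \emph{does} separate — this being exactly the Ricci flatness established above — it suffices to check that $A$ itself does not; concretely, that $A(s,0)^{-1}A(s,t)$ depends genuinely on $s$, which is a finite computation in the normal form for $H=K=S_{c,r}^2$ and is equivalent to exhibiting one nonzero entry of the curvature matrix $(\ref{eq:s2w-lagr-conn-coords-curv})$. (It is illuminating that the phenomenon already appears in dimension $2$: for two arcs of a circle of radius $r$, parametrised by $x(s)=c+r(\cos s,\sin s)$ and $y(t)=c+r(\cos t,\sin t)$, formula $(\ref{eq:s2w-rays-curv})$ collapses to the single curvature coefficient $\xi=\tfrac1{2(1-\cos(s-t))}$, which is nowhere zero.) Hence the bi-Lagrangian connection induced on $T^*S^2$ by any pair of hypersurfaces is never flat.
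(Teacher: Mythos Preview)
Your proposal is essentially correct and, for the \emph{necessity} direction and for \emph{non-flatness}, follows the same route as the paper: restrict the Ricci coefficients $\kappa_{ij}$ and their derivatives to the diagonal $\{s=t\}$, observe that for $n=3$ the reduced relation $c_{11}=0$ becomes the sum of squares $192\,\sigma_1^2+20\,\rho^2\sigma_2^2=0$ forcing $\sigma\equiv\text{const}$, then solve the remaining diagonal equations for $\rho$; and for non-flatness, compute one nonzero entry of the curvature matrix $(\ref{eq:s2w-lagr-conn-coords-curv})$ directly (your reformulation via Theorem~$\ref{thm:s2w-lagr-fg}$ and $A(s,0)^{-1}A(s,t)$ is equivalent). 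Two remarks, though. First, you overestimate the difficulty of the final step in the necessity argument: once $\sigma$ is constant, the paper shows that $\rho^2\kappa_{ij}(s,s)=0$ collapses to the \emph{linear} system $\partial_{s_i}\partial_{s_j}(\rho^2)=-8\delta_{ij}$, solved immediately by $\rho^2=-4\abs{s-c'}^2+4r^2$ without any CAS or ``spurious branches''; your worry about passing from ``jet'' to ``function'' is misplaced, since the diagonal relations $c_{jk}(s)=0$ already hold for every $s$, not at a single point. Second, the $n=2$ computation with $\xi$ is a nice sanity check but proves nothing about $T^*S^2$; the actual argument is the curvature computation you outline just before it.

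Where your approach genuinely differs from the paper is the \emph{sufficiency} direction, and yours is nicer. The paper substitutes the explicit graph parametrisations of the two hemispheres into $(\ref{eq:s2w-lagr-rays-norm:vol})$ and simplifies until $h$ factors. You instead work from $(\ref{eq:s2w-lagr-rays-vol})$ and use the elementary sphere identity $\langle N(x),x-y\rangle=\tfrac{1}{2r}\abs{x-y}^2$ for $x,y\in S^2_{c,r}$ to see $h=\pm\tfrac{1}{2r^2}J_H(s)J_K(t)$ with all powers of $\abs{x-y}$ cancelling precisely when $n=3$. This is coordinate-free, explains \emph{why} $n=3$ is special, and as you note makes transparent why concentric spheres of different radii fail.
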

\begin{proof}
  First, assume that the Ricci tensor of $\nabla$ vanishes. We proceed exactly
  as in the proof of Theorem $\ref{thm:s2w-lagr-rays-main}$, case $n=5$. Using
  the setup and notation established in the proof of said theorem, the equality
  $c_{00}=0$ and its derivative with respect to $s_i$ yield
  \begin{align}
    \label{eq:s2w-lagr-rays-main-n3:c00}
    0 &= 4 + \rho_1^2 + \rho\rho_2 - \sigma_1^2, \\
    0 &= 3\rho_1\rho_2 + \rho\rho_3 - 2\sigma_1\sigma_2^2.
  \end{align}
  The equality $c_{11}$ leads in turn to
  \begin{equation}
    \begin{aligned}
      0 &= -5 \rho^2 \sigma_2^2+\rho^2 \rho_2^2+\rho_1^2 (-8 \rho \rho_2-6
      \sigma_1^2+24)-4 \rho \rho_1 (\rho \rho_3-2 \sigma_1 \sigma_2) \\
      &\hskip 7em -4 \rho \rho_2 (\sigma_1^2-4)+3 \rho_1^4+3 \sigma_1^4-72
      \sigma_1^2+48.
    \end{aligned}
  \end{equation}
  If we let $q(s) = c_{11}(s)$, $p_1(s) = c_{00}(s)$ and $p_2(s) =
  \basis{s_i}c_{00}(s)$, then
  \begin{equation}
    q(s) = (60 + 3\rho_1^2 + \rho\rho_2 - 3\sigma_1^2)\cdot p_1(s) -
      (8\rho\rho_1)\cdot p_2(s) - 48(4+\rho_1^2+\rho\rho_2)-5\rho^2\sigma_2^2,
  \end{equation}
  from which we obtain
  \begin{equation}
    48(4+\rho_1^2+\rho\rho_2) + 5\rho^2\sigma_2^2 = 0.
  \end{equation}
  Inserting $(\ref{eq:s2w-lagr-rays-main-n3:c00})$ into this equation gives
  \begin{equation}
    48\sigma_1^2 + 5\rho^2\sigma_2^2 = 0.
  \end{equation}
  Since both summands are non-negative, this equality is equivalent to
  \begin{equation}
    \sigma_1 = \basis{s_i}\sigma(s) = 0.
  \end{equation}
  It holds irrespective of the choice of $i=1,2$, hence there exists
  $c_3\in\Rb{1}$ such that $\sigma(s) = 2c_3$ at each point $s\in\Rb{2}$. With
  this in mind, the equalities $\rho^2\kappa_{ij}(s,s) = 0$ for $i,j=1,2$
  involving the coefficients of the Ricci tensor $\mathrm{Rc} =
  \smsum{i,j}\kappa_{ij}\,ds_idt_j$ become
  \begin{equation}
    \begin{aligned}
      0 &= 4 + (\basis{s_i}\rho(s))^2 + \rho(s)((\basis{s_i})^2\rho(s)), \\
      0 &= (\basis{s_i}\rho(s))(\basis{s_j}\rho(s))
        + \rho(s)(\basis{s_i}\basis{s_j}\rho(s))\quad\text{for }i\neq j,
    \end{aligned}
  \end{equation}
  which reduce to
  \begin{equation}
    \begin{aligned}
      -8 &= (\basis{s_i})^2(\rho(s)^2), \\
      0 &= \basis{s_i}\basis{s_j}(\rho(s)^2)\quad\text{for }i\neq j.
    \end{aligned}
  \end{equation}
  The second of these equations tells us that $\rho(s)^2 = \rho_1(s_1) +
  \rho_2(s_2)$, while the first one establishes each $\rho_i(s_i)$ as a
  function of the form $\rho_i(s_i) = -4(s_i-c_i)^2 + b_i$ for some fixed
  $b_i,c_i\in\Rb{1}$. Therefore we can write
  \begin{equation}
    \begin{aligned}
      \rho(s)^2 &= -4\big((s_1-c_1)^2+(s_2-c_2)^2\big) + b
    \end{aligned}
  \end{equation}
  for some $c_1,c_2,b\in\Rb{1}$ with $b>0$. Let us write $b=4r^2$ for some
  $r>0$. Recall that $\rho(s) = f(s)-g(s)$ and $\sigma(s) = f(s)+g(s)$, where
  the functions $f(s),g(s)$ define the hypersurfaces $H,K$ respectively as
  graphs in $\Rb{3}$. Assume without loss of generality that $\rho(s)>0$.
  Solving for $f(s)$ and $g(s)$ yields
  \begin{equation}
    \label{eq:s2w-lagr-rays-main-fg}
    \begin{aligned}
      f(s) &= c_3 + \sqrt{r^2 -
        \big((s_1-c_1)^2+(s_2-c_2)^2\big)}, \\
      g(s) &= c_3 - \sqrt{r^2 -
        \big((s_1-c_1)^2+(s_2-c_2)^2\big)}.
    \end{aligned}
  \end{equation}
  Hypersurfaces given by graphs of these functions lie on a sphere
  \begin{equation}
    S_{c,r}^2 = \set{x\in\Rb{3} : (x_1-c_1)^2 + (x_2-c_2)^2
    + (x_3-c_3)^2 = r^2}
    \subseteq\Rb{3},
  \end{equation}
  where $c=(c_1,c_2,c_3)\in\Rb{3}$. These are the only
  possibilities for $H,K$ to induce a Ricci-flat bi-Lagrangian connection
  inside a system of coordinates normalized via Lemma
  $\ref{thm:s2w-lagr-rays-norm}$. In the original orthogonal coordinates, the
  set $S_{c,r}^2$ corresponds to an arbitrary sphere of positive
  radius, while the points of intersection $(s_1,s_2,f(s_1,s_2))$ and
  $(t_1,t_2,g(t_1,t_2))$ of the ray $(p,q)\in T^*S^2$ with
  $S_{c,r}^2\subseteq\Rb{3}$ correspond to any pair of different points
  $x_0,y_0\in S_{c,r}^2$. It is clear that $H=(S_{c,r}^2,x_0)$ and
  $K=(S_{c,r}^2,y_0)$ as surface-germs.

  It can be verified directly that the functions $f,g$ of the form
  $(\ref{eq:s2w-lagr-rays-main-fg})$ yield a Ricci-flat connection in dimension
  $n=3$. Indeed, by inserting them into $(\ref{eq:s2w-lagr-rays-norm:vol})$ we
  obtain
  \begin{equation}
    \begin{aligned}
      \omega^2 &= -h(s_1,s_2)h(t_1,t_2)
        \ ds_1\wedge ds_2\wedge dt_1\wedge dt_2,\quad \text{where}\\
        h(s_1,s_2) &=
        \frac{1}{\sqrt{r^2-\big((s_1-c_1)^2+(s_2-c_2)^2\big)}}.
    \end{aligned}
  \end{equation}
  For such a volume form the formula $(\ref{eq:s2w-lagr-rays:ricci})$ clearly
  evaluates to $0$. On the other hand, for the matrix $A=[a_{ij}]$ with entries
  given by $\omega = \smsum{i,j} a_{ij}\,ds_i\wedge dt_j$, the upper-left
  $2\times 2$ block $\Omega_\fol = d_y(d_x A\cdot A^{-1})^T$ of the matrix of
  bi-Lagrangian curvature $2$-forms $(\ref{eq:s2w-lagr-conn-coords-curv})$
  taken at $s=t=0$ evaluates to
  \begin{equation}
    \Omega_\fol = \frac{1}{4(r^2-c_1^2-c_2^2)^2} \begin{bmatrix}
        -r^2\,ds_1\wedge dt_1 + r^2\,ds_2\wedge dt_2
      & r^2\,ds_1\wedge dt_2 - 3r^2\,ds_2\wedge dt_1 \\
        -3r^2\,ds_1\wedge dt_2 + r^2\,ds_2\wedge dt_1
      & r^2\,ds_1\wedge dt_1 - r^2\,ds_2\wedge dt_2
    \end{bmatrix}.
  \end{equation}
  Since $r>0$, the curvature of the bi-Lagrangian connection $\nabla$ is not
  null despite $\nabla$ being Ricci-flat.
\end{proof}

\subsection{Bi-Lagrangian flatness of structures induced by tangents to
Lagrangian curves}
\label{ch:s2w-lagr-tangents}

Consider another example of a bi-Lagrangian structure provided by
Tabachnikov in \cite[{}I.6]{2-webs}.

Let $(\Rb{2n},\omega)$ be a standard symplectic space of dimension $2n$ with
canonical coordinates $(p_1,\ldots,p_n,q_1,\ldots,q_n)$ and symplectic form
$\omega=\sum_{i=1}^n dp_i\wedge dq_i$. To each Lagrangian submanifold
$L\subseteq\Rb{2n}$ one can associate a family $\fol_L$ of affine Lagrangian
subspaces $T_xL$ parametrized by points $x$ of $L$. If a point $p\in\Rb{2n}$
lies on the affine space $T_xL$ for some $x\in L$, then the family $\fol_L$ is
a foliation of the neighbourhood of $p$ if the contraction $\II_x(p-x,\cdot)$
of the second fundamental form $\II$ of $L$ with the affine vector $p-x$ is
invertible as a linear map from $T_xL$ to $T_xL^\bot$. Generically, two affine
Lagrangian subspaces intersect at a point $p\in\Rb{2n}$, hence for a pair of
generic Lagrangian submanifolds $L,K$ one obtains a bi-Lagrangian
structure $(U,\omega,\fol_L,\fol_K)$ defined on some neighbourhood $U$ of $p$
with foliations $\fol_L,\fol_K$ formed by the affine tangent spaces of $L,K$.

Tabachnikov encouraged his readers to find out which bi-Lagrangian structures
of this kind are trivial. We were able to solve this problem in the
$2$-dimensonal case, where both Lagrangian submanifolds are $L,K$ regular
curves, under a natural assumption of \emph{regularity} of the structure
induced by tangents: we require that for each point $p_0$ of its domain $U$ and
points $p_1\in L$, $p_2\in K$ such that $p_0\in T_{p_1}L\cap T_{p_2}K$, the
tangents to the restrictions $L_{|V_1},K_{|V_2}$ of $L,K$ to arbitrary
open neighbourhoods $V_1\subseteq L$, $V_2\subseteq K$ of $p_1,p_2$ induce a
bi-Lagrangian structure on some open neighbourhood of $p_0$ (or, in other
words, that the map $(p_1,p_2)\mapsto p_0$ from points of tangencies to $L,K$
to the intersection of the corresponding tangents in $U$ is open). We used
methods similar to those used in Section $\ref{ch:s2w-lagr-rays}$, where the
bulk of the argument rests upon computer-assisted calculations. The authors
themselves have relied on \texttt{Wolfram Mathematica 13} \cite{math} to obtain
their result. It states that the curvature of the canonical connection cannot
vanish identically for any regular bi-Lagrangian structure of the above kind.
Before proving this theorem, we state the conditions for genericity and
regularity of the structure in question in the form of a lemma.

\begin{lem}
  \label{thm:s2w-lagr-tangents-coords}
  Let $\omega_0=dx\wedge dy$ be the germ of the standard symplectic form on
  $\Rb{2}$ at $p_0=(x_0,y_0)\in\Rb{2}$, and let $L,K$ be two germs of curves at
  points $p_1=(x_{01},y_{01}), p_2=(x_{02},y_{02})\in\Rb{2}$ respectively. The
  quadruple $(\Rb{2},\omega_0,\fol_K,\fol_L)$, where $\fol_L=\smset{T_qL:q\in
  L}, \fol_K=\smset{T_qK:q\in K}$ forms a bi-Lagrangian structure-germ at $p_0$
  if the following three conditions hold:
  \begin{enumerate}[label=$(\alph{enumi})$, ref=\alph{enumi}]
    \item\label{thm:s2w-lagr-tangents-2d:fols}
      the affine lines $T_{p_1}L$ and $T_{p_2}K$ intersect transversely at
      point $p_0=(x_0,y_0)$,
    \item\label{thm:s2w-lagr-tangents-2d:graphs}
      the affine line containing $p_1,p_2$ intersects both $L$ and $K$
      transversely,
    \item\label{thm:s2w-lagr-tangents-2d:non-sing}
      the curvatures of $L,K$ at $p_1,p_2$ respectively are non-zero.
  \end{enumerate}
  Moreover, if all of the above conditions are met, then
  any parametrization $\gamma_L(s)=(x_1(s),y_1(s))$ of $L$ and
  $\gamma_K(t)=(x_2(t),y_2(t))$ of $K$ with
  $\gamma_L(0)=(x_{01},y_{01}),\gamma_K(0)=(x_{02},y_{02})$ yields a local
  coordinate system $(s,t)\mapsto p(s,t)$ satisfying $T\fol_L=\ker ds$ and
  $T\fol_K=\ker dt$, where $\smset{p}=T_{(x_1,y_1)}L\cap T_{(x_2,y_2)}K$. In
  this coordinate system the symplectic form $\omega$ is
  \newcommand{\gl}{\gamma_L}
  \newcommand{\gk}{\gamma_K}
  \begin{equation}
    \label{eq:s2w-lagr-tangents-2d:vol}
    \begin{aligned}
      &dx\wedge dy =
        \det\big(\gl'(s),\gl(s)-\gk(t)\big)
        \det\big(\gk'(t),\gl(s)-\gk(t)\big) \\
        & \hskip 5em\cdot
        \det\big(\gl'(s),\gl''(s)\big)
        \det\big(\gk'(t),\gk''(t)\big)
        \det(\gl'(s),\gk'(t))^{-3}\ ds\wedge dt,
    \end{aligned}
  \end{equation}
  where $\det(v,w) = v_1w_2-v_2w_1$ for each pair of vectors $v=(v_1,v_2),
  w=(w_1,w_2)\in\Rb{2}$.
\end{lem}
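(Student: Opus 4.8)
The plan is to realise the chart $(s,t)\mapsto p(s,t)$ explicitly as the map sending $(s,t)$ to the intersection point of the tangent lines $T_{\gamma_L(s)}L$ and $T_{\gamma_K(t)}K$, and to extract everything — the two foliations, their transversality, and the volume formula $(\ref{eq:s2w-lagr-tangents-2d:vol})$ — from a single computation of the Jacobian of this map.

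First I would fix parametrizations $\gamma_L,\gamma_K$ with $\gamma_L(0)=p_1$, $\gamma_K(0)=p_2$ and write the generic point of the chart as
\[
  p(s,t)=\gamma_L(s)+u(s,t)\,\gamma_L'(s)=\gamma_K(t)+v(s,t)\,\gamma_K'(t),
\]
the unique common point of the two tangent lines. Applying the linear functional $w\mapsto\det(w,\gamma_K'(t))$ and then $w\mapsto\det(\gamma_L'(s),w)$ to the vector identity $\gamma_L(s)+u\gamma_L'(s)=\gamma_K(t)+v\gamma_K'(t)$ solves for $u$ and $v$ as ratios of determinants whose common denominator is $\det(\gamma_L'(s),\gamma_K'(t))$; by condition $(\ref{thm:s2w-lagr-tangents-2d:fols})$ this denominator is nonzero near $(0,0)$, so $u,v$ are smooth there. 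By construction each level set $\{s=\text{const}\}$ lies on a tangent line of $L$ and each $\{t=\text{const}\}$ on a tangent line of $K$, so once $(s,t)$ is known to be a local chart we get $T\fol_L=\ker ds$ and $T\fol_K=\ker dt$ for free, and the leaves, being $1$-dimensional, are automatically Lagrangian.

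Next I would compute $dx\wedge dy$ in the $(s,t)$-chart. Since $\gamma_L(s),\gamma_L'(s)$ do not depend on $t$ and $\gamma_K(t),\gamma_K'(t)$ do not depend on $s$, differentiating the two expressions for $p$ gives $\partial_t p=(\partial_t u)\,\gamma_L'(s)$ and $\partial_s p=(\partial_s v)\,\gamma_K'(t)$, whence
\[
  dx\wedge dy=\det(\partial_s p,\partial_t p)\,ds\wedge dt
    =-(\partial_s v)(\partial_t u)\,\det(\gamma_L'(s),\gamma_K'(t))\,ds\wedge dt.
\]
Expanding $\partial_s v$ and $\partial_t u$ by the quotient rule and differentiating the determinants column by column, the numerators become quadratic in $2$-dimensional vectors and collapse by the identity $\det(a,b)\det(c,d)-\det(a,d)\det(c,b)=\det(a,c)\det(b,d)$, valid on $\Rb{2}$ because $\Lambda^2\Rb{2}$ is one-dimensional: the numerator of $\partial_s v$ reduces to $\det(\gamma_L'(s),\gamma_L''(s))\,\det(\gamma_K'(t),\gamma_L(s)-\gamma_K(t))$ and that of $\partial_t u$ to $-\det(\gamma_K'(t),\gamma_K''(t))\,\det(\gamma_L'(s),\gamma_L(s)-\gamma_K(t))$. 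Substituting back and cancelling one factor of $\det(\gamma_L'(s),\gamma_K'(t))$ gives precisely the coefficient in $(\ref{eq:s2w-lagr-tangents-2d:vol})$.

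Finally I would verify non-degeneracy at the base point, which simultaneously settles the first assertion of the lemma: evaluating that coefficient at $(s,t)=(0,0)$ yields the product of $\det(\gamma_L'(0),\gamma_L''(0))$ and $\det(\gamma_K'(0),\gamma_K''(0))$, nonzero by the non-vanishing curvature assumption $(\ref{thm:s2w-lagr-tangents-2d:non-sing})$, times $\det(\gamma_L'(0),p_1-p_2)$ and $\det(\gamma_K'(0),p_1-p_2)$, nonzero by the transversality of the line through $p_1,p_2$ to $L$ and to $K$ in condition $(\ref{thm:s2w-lagr-tangents-2d:graphs})$, divided by $\det(\gamma_L'(0),\gamma_K'(0))^3$, which is nonzero by $(\ref{thm:s2w-lagr-tangents-2d:fols})$. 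Hence $dx\wedge dy$ is a nowhere-vanishing $2$-form in the $(s,t)$-chart near $(0,0)$, so $(s,t)\mapsto p(s,t)$ is a local diffeomorphism; its level sets are exactly the relevant tangent lines, so $\fol_L,\fol_K$ are genuine foliations of a neighbourhood of $p_0$, transverse there by $(\ref{thm:s2w-lagr-tangents-2d:fols})$, with Lagrangian leaves — that is, $(\Rb{2},\omega_0,\fol_K,\fol_L)$ is a bi-Lagrangian structure-germ at $p_0$. I expect the only delicate point to be the sign bookkeeping in the determinant expansions; the algebraic identity is elementary, and once applied correctly everything lines up with $(\ref{eq:s2w-lagr-tangents-2d:vol})$.
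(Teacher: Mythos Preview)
Your argument is correct and follows the same overall strategy as the paper: parametrize a neighbourhood of $p_0$ by $(s,t)\mapsto T_{\gamma_L(s)}L\cap T_{\gamma_K(t)}K$, compute the Jacobian to obtain $(\ref{eq:s2w-lagr-tangents-2d:vol})$, and read off non-degeneracy from the three hypotheses factor by factor.

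The execution differs, however, and yours is cleaner. The paper solves the $2\times 2$ linear system for $(x(s,t),y(s,t))$ explicitly in coordinates and then asserts that a direct computation of $dx\wedge dy$ yields $(\ref{eq:s2w-lagr-tangents-2d:vol})$, without showing the intermediate algebra. You instead work with the affine parameters $u,v$ along the two tangent lines and exploit the key geometric observation that $\partial_t p$ is proportional to $\gamma_L'(s)$ while $\partial_s p$ is proportional to $\gamma_K'(t)$; this immediately factors the Jacobian as $-(\partial_s v)(\partial_t u)\det(\gamma_L',\gamma_K')$ and reduces the remaining work to two quotient-rule derivatives simplified by the Pl\"ucker-type identity $\det(a,b)\det(c,d)-\det(a,d)\det(c,b)=\det(a,c)\det(b,d)$. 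What this buys you is a fully hand-checkable derivation of the volume coefficient, whereas the paper's route in coordinates is opaque and in practice relies on computer algebra. Your worry about sign bookkeeping is well placed but the signs do line up: the minus from $\det(\gamma_K',\gamma_L')=-\det(\gamma_L',\gamma_K')$ cancels against the minus in the numerator of $\partial_t u$.
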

\begin{proof}
  \newcommand{\gl}{\gamma_L}
  \newcommand{\gk}{\gamma_K}
  Assume first that $(\ref{thm:s2w-lagr-tangents-2d:fols})$,
  $(\ref{thm:s2w-lagr-tangents-2d:graphs})$,
  $(\ref{thm:s2w-lagr-tangents-2d:non-sing})$ hold and express $L,K$ as images
  of some parametrized curve-germs $\gl(s)=(x_1(s),y_1(s))$ and
  $\gk(t)=(x_2(t),y_2(t))$ at $0$. Since the affine tangents $T_{\gamma_L(s)}L$
  and $T_{\gamma_K(t)}K$ have nonempty intersection by
  $(\ref{thm:s2w-lagr-tangents-2d:fols})$ and continuity, their unique common
  point $p=(x,y)$ satisfies $p-\gamma_L(s)\in T_{p_1}L=\big\langle\mskip 1mu
  (x_1'(s),y_1'(s))\mskip 1mu\big\rangle$ and $p-\gamma_K(t)\in
  T_{p_2}K=\big\langle\mskip 1mu (x_2'(t),y_2'(t))\mskip 1mu\big\rangle$. This
  translates to the following linear system of equations.
  \begin{equation}
    \left\{\begin{aligned}\mskip 4mu
      \big(x-x_1(s)\big)y_1'(s) - \big(y-y_1(s)\big)x_1'(s) &= 0, \\
      \big(x-x_2(t)\big)y_2'(t) - \big(y-y_2(t)\big)x_2'(t) &= 0.
    \end{aligned}\right.
  \end{equation}
  Its solution,
  \begin{equation}
    \label{eq:s2w-lagr-tangents-2d:param}
    \left\{\begin{aligned}\mskip 4mu
      x(s,t) &= \frac{
        \big(y_1(s)-y_2(t)\big)x_1'(s)x_2'(t)+x_1'(s)x_2(t)y_2'(t)-x_1(s)x_2'(t)y_1'(t)
      }{x_1'(s)y_2'(t)-x_2'(t)y_1'(s)}, \\[1ex]
      y(s,t) &= -\frac{
        \big(x_1(s)-x_2(t)\big)y_1'(s)y_2'(t)+y_1'(s)y_2(t)x_2'(t)-y_1(s)y_2'(t)x_1'(t)
      }{x_1'(s)y_2'(t)-x_2'(t)y_1'(s)},
    \end{aligned}\right.
  \end{equation}
  expresses $(x,y)$ as a function of parameters $s,t$. From this it is
  straightforward to compute $dx\wedge dy$ in terms of $ds,dt$. The result is
  the $2$-form $(\ref{eq:s2w-lagr-tangents-2d:vol})$. It is well-defined and
  nondegenerate for $(s,t)$ in a small neighbourhood of $0$. To see this, note
  that the first two factors $\det(\gl'(s),\gl(s)-\gk(t))$ and
  $\det(\gk'(t),\gl(s)-\gk(t))$ of $(\ref{eq:s2w-lagr-tangents-2d:vol})$ are
  nonvanishing by continuity and assumption
  $(\ref{thm:s2w-lagr-tangents-2d:graphs})$ of the theorem. The next two,
  namely $\det\big(\gl'(s),\gl''(s)\big)$ and $\det\big(\gk'(t),\gk''(t)\big)$,
  are exactly the curvatures of $\gamma_L,\gamma_K$ at
  $\gamma_L(s),\gamma_K(t)$ respectively, hence are non-zero by
  $(\ref{thm:s2w-lagr-tangents-2d:non-sing})$ and continuity. Finally,
  nonvanishing of the last factor $\det(\gl'(s),\gk'(t))$ follows from
  $(\ref{thm:s2w-lagr-tangents-2d:fols})$. This proves $(s,t)\mapsto
  \big(x(s,t),y(s,t)\big)$ is a valid local coordinate system. Moreover it
  satisfies $T\fol_L=\ker ds$ and $T\fol_K=\ker dt$ by construction, hence both
  $\fol_L, \fol_K$ are foliations of a neighbourhood of $p_0$.
\end{proof}

Let $(U,\omega,\fol_L,\fol_K)$ be any regular bi-Lagrangian structure induced
by tangents to $L,K$ on an open set $U\subseteq\Rb{2}$. The conditions
$(\ref{thm:s2w-lagr-tangents-2d:fols})$,
$(\ref{thm:s2w-lagr-tangents-2d:graphs})$ and
$(\ref{thm:s2w-lagr-tangents-2d:non-sing})$ as stated in Lemma
$\ref{thm:s2w-lagr-tangents-coords}$ are indeed satisfied at generic points of
$U$, or, more precisely, the set of points $p_0\in U$ such that there exist
points $p_1\in L$ and $p_2\in K$ with $p_0\in T_{p_1}L\cap T_{p_2}K$ satisfying
$(\ref{thm:s2w-lagr-tangents-2d:fols})$,
$(\ref{thm:s2w-lagr-tangents-2d:graphs})$ and
$(\ref{thm:s2w-lagr-tangents-2d:non-sing})$ is open and dense in $U$.

To see this, assume first that $(\ref{thm:s2w-lagr-tangents-2d:fols})$ does not
hold at a certain point $p_0\in T_{p_1}L\cap T_{p_2}K$. Then
$T_{p_1}L=T_{p_2}K$ share a leaf, hence $(U,\omega,\fol_L,\fol_K\!)$ is not a
bi-Lagrangian structure, a contradiction. The conditions
$(\ref{thm:s2w-lagr-tangents-2d:graphs})$ and
$(\ref{thm:s2w-lagr-tangents-2d:non-sing})$, which concern pairs of curves
$L,K$, are conjunctions of two sub-conditions
$(\ref{thm:s2w-lagr-tangents-2d:graphs}_L\!)$,
$(\ref{thm:s2w-lagr-tangents-2d:graphs}_K\!)$ and
$(\ref{thm:s2w-lagr-tangents-2d:non-sing}_L\!)$,
$(\ref{thm:s2w-lagr-tangents-2d:non-sing}_K\!)$ regarding the individual curves
$L$, $K$ in a natural way. Since all of the above conditions are open in
$L\times K$, the sets of points $p_0\in U$ such that these conditions are
satisfied for some $p_1\in L$ and $p_2\in K$ is also open in $U$ due to
regularity of the structure. Therefore the only thing left to check is their
density, since finite intersections of open and dense subsets are also open and
dense. Assume now that the condition
$(\ref{thm:s2w-lagr-tangents-2d:graphs}_L\!)$ does not hold on some
neighbourhood of $p_0\in U$ with $p_0\in T_{p_1}L\cap T_{p_2}K$, that is, the
affine line $\ell$ containing $p_L,p_K$ does not intersect $L$ transversely for
$p_L\in L,p_K\in K$ close to $p_1,p_2$ repsectively. Then $p_1-p_K\in T_{p_1}L$
for each $p_K$ close to $p_2$, hence an open subset of $K$ containing $p_2$ is
contained in a line $T_{p_1}L$, so that $T_{p_2}K=T_{p_1}L$ contradicting
property $(\ref{thm:s2w-lagr-tangents-2d:fols})$ established above for all
points $p_0\in U$. To obtain the density of condition
$(\ref{thm:s2w-lagr-tangents-2d:non-sing}_L\!)$ note that if the curvature of
$L$ vanishes at all $p_L\in L$ in some neighbourhood $V$ of $p_1$, then
$L_{|V}$ is a fragment of a line, hence all tangents of $L_{|V}$ coincide and
$\fol_{L_{|V}}$ does not form a foliation of a neighbourhood of any point
$p_0\in U$ satisfying $p_0\in T_{p_1}L\cap T_{p_2}K$, which contradicts
regularity of the structure. The density claims proved above are mirrored in
the corresponding claims for $(\ref{thm:s2w-lagr-tangents-2d:graphs}_K\!)$ and
$(\ref{thm:s2w-lagr-tangents-2d:non-sing}_K\!)$, which together yield the
desired result.

With this genericity claim in mind, it is possible to reduce the global problem
to its localized, generic version, the formulation of which is the content of
Theorem $\ref{thm:s2w-lagr-tangents-2d}$ below.

\begin{thm}
  \label{thm:s2w-lagr-tangents-2d}
  Let $\omega_0=dx\wedge dy$ be the standard symplectic form on $\Rb{2}$,
  let $L,K$ be two germs of curves at points $p_1=(x_1,y_1),
  p_2=(x_2,y_2)\in\Rb{2}$ respectively and assume that conditions
  $(\ref{thm:s2w-lagr-tangents-2d:fols})$,
  $(\ref{thm:s2w-lagr-tangents-2d:graphs})$,
  $(\ref{thm:s2w-lagr-tangents-2d:non-sing})$ of Lemma
  $\ref{thm:s2w-lagr-tangents-coords}$ hold, so that
  $(\Rb{2},\omega_0,\fol_L,\fol_K)$ is a bi-Lagrangian structure-germ at
  $p_0\in\Rb{2}$ with foliations $\fol_L=\smset{T_qL:q\in L},
  \fol_K=\smset{T_qK:q\in K}$. The canonical connection $\nabla$ of the
  bi-Lagrangian structure-germ $(\Rb{2},\omega_0,\fol_L,\fol_K)$ is never flat.
\end{thm}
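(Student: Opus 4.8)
The plan is to rerun the computer-assisted argument of Section~\ref{ch:s2w-lagr-rays} in this setting. By Lemma~\ref{thm:s2w-lagr-tangents-coords}, fixing parametrizations $\gamma_L(s)$ of $L$ and $\gamma_K(t)$ of $K$ produces a coordinate chart $(s,t)$ near $p_0$ with $T\fol_L=\ker ds$, $T\fol_K=\ker dt$ and $\omega_0=f(s,t)\,ds\wedge dt$, where $f$ is the explicit product of four determinants in $(\ref{eq:s2w-lagr-tangents-2d:vol})$. In dimension two the quadruple $(\Rb2,\omega_0,\fol_L,\fol_K)$ is a codimension-$1$ divergence-free $2$-web whose $\web_\omega$-connection is $\nabla$, and the matrix $A$ of Proposition~\ref{thm:s2w-lagr-conn-coords} is the $1\times1$ matrix $[f]$. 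Hence, by Theorem~\ref{thm:s2w-lagr-fg} (equivalently Theorem~\ref{thm:dfw-geom}, since in dimension two flatness coincides with Ricci-flatness and the only Ricci coefficient equals $\partial_s\partial_t\log\abs{f}$), $\nabla$ is flat on its domain if and only if $f(s,t)=\phi(s)\,\psi(t)$ there for some functions $\phi,\psi$ — equivalently, $\partial_s\partial_t\log\abs{f}\equiv0$. The factors $\det(\gamma_L',\gamma_L'')(s)$ and $\det(\gamma_K',\gamma_K'')(t)$ of $f$ are already of product form and, by $(\ref{thm:s2w-lagr-tangents-2d:non-sing})$, nonvanishing, so only the remaining factor $\det(\gamma_L',\gamma_L-\gamma_K)\,\det(\gamma_K',\gamma_L-\gamma_K)\,\det(\gamma_L',\gamma_K')^{-3}$ matters.

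Since reparametrizing $L$ and $K$ multiplies $f$ by a function of the form $\phi(s)\psi(t)$, the product-form property is independent of the parametrizations, so one is free to pick the one that keeps the ensuing computation smallest. The useful choice — legitimate precisely because $(\ref{thm:s2w-lagr-tangents-2d:non-sing})$ holds — is to parametrize each curve by the direction angle of its tangent line, i.e.\ to encode $L$ and $K$ by their support functions $h_L,h_K$. In the resulting coordinates $(\theta,\phi)$ one computes $\omega_0=F(\theta,\phi)\,G(\theta,\phi)\,\sin^{-3}(\phi-\theta)\,d\theta\wedge d\phi$, where
\[
  F=h_L'(\theta)\sin(\phi-\theta)+h_L(\theta)\cos(\phi-\theta)-h_K(\phi),\qquad
  G=h_L(\theta)-h_K(\phi)\cos(\phi-\theta)+h_K'(\phi)\sin(\phi-\theta),
\]
so that $\partial_\theta F=\lambda(\theta)\sin(\phi-\theta)$ and $\partial_\phi G=\mu(\phi)\sin(\phi-\theta)$ with $\lambda=h_L+h_L''$, $\mu=h_K+h_K''$ the nowhere-vanishing radii of curvature, and $\cos(\phi-\theta)\,F-\sin(\phi-\theta)\,\partial_\phi F=G$ (plus its mirror in $G$). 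Differentiating $\log\bigl(FG\sin^{-3}(\phi-\theta)\bigr)$ once in $\theta$ and once in $\phi$ and using these relations, the flatness equation $\partial_\theta\partial_\phi\log\abs{FG\sin^{-3}(\phi-\theta)}=0$ collapses to the single rational identity
\[
  \frac{\lambda(\theta)\,G(\theta,\phi)}{F(\theta,\phi)^2}-\frac{\mu(\phi)\,F(\theta,\phi)}{G(\theta,\phi)^2}=\frac{3}{\sin^2(\phi-\theta)},
\]
which must hold for $(\theta,\phi)$ in a neighbourhood of the base point, $\lambda,\mu$ being nonzero and $F,G$ nonzero by nondegeneracy of $\omega_0$.

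The crux of the proof — and the step I expect to be the main obstacle — is to show this two-variable identity is incompatible with the hypotheses. Clearing denominators turns it into a polynomial identity; expanding that identity together with its partial derivatives $\partial_\theta^i\partial_\phi^j$ at the base point $(\theta_0,\phi_0)$ yields an infinite, heavily overdetermined system of polynomial equations in the jets of $h_L$ at $\theta_0$, the jets of $h_K$ at $\phi_0$, and $\sin(\phi_0-\theta_0),\cos(\phi_0-\theta_0)$ (the jet variables growing linearly, the equations quadratically, in the differentiation order). Exactly as in the proof of Theorem~\ref{thm:s2w-lagr-rays-main}, the lowest-order equations can be solved successively for the higher jets in terms of finitely many low ones, reducing the problem to a finite polynomial system in a handful of unknowns (the first few jets together with the trigonometric data); a Gr\"obner-basis or resultant computation — where a computer algebra system is indispensable — then shows this system has no solution unless $\lambda\equiv0$ or $\mu\equiv0$ or $\sin(\phi_0-\theta_0)=0$, contradicting $(\ref{thm:s2w-lagr-tangents-2d:non-sing})$ and $(\ref{thm:s2w-lagr-tangents-2d:fols})$ respectively. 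The real difficulty is bookkeeping: selecting a parametrization in which the polynomials stay of manageable size (the tangent-angle one above is what makes the curvature enter as a single factor $\lambda,\mu$ and linearizes the first derivatives of $F,G$), ordering the variables and equations so the elimination terminates, and isolating and separately disposing of any degenerate branches the elimination produces — the analogue of the special dimensions $n=3,5$ that had to be treated apart in the space-of-rays argument.
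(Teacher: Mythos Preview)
Your strategy is sound and your support-function reformulation is a genuinely different and in some ways cleaner route than the paper's. The paper fixes orthogonal coordinates so that $p_2-p_1$ is vertical, writes $L,K$ as graphs $(s,f(s))$, $(t,g(t))$ via condition $(\ref{thm:s2w-lagr-tangents-2d:graphs})$, computes the curvature coefficient $\kappa$ explicitly, and then---this is the key device---evaluates $\kappa$ and its mixed partials along the diagonal $t=s$, introducing $\rho=f-g$, $\sigma=f+g$. This diagonal restriction turns the two-variable problem into a one-variable jet problem in $\rho,\sigma$ and their $s$-derivatives, and the elimination is then actually carried out: from $c_{00},c_{01}-c_{10},c_{11},\partial_s c_{00}$ one obtains the factorized relation $(3\rho_1^2+2\rho\rho_2)(4\rho_1^2+3\rho\rho_2)=0$, and in each branch a further elimination using $c_{jk}$ with $j,k\le 2$ forces $\rho_1=0$, contradicting~$(\ref{thm:s2w-lagr-tangents-2d:fols})$. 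Your parametrization by tangent angle trades the graph variables for the support functions $h_L,h_K$; your derivation of the flatness identity
\[
\frac{\lambda G}{F^2}-\frac{\mu F}{G^2}=\frac{3}{\sin^2(\phi-\theta)}
\]
is correct (the relations $\partial_\theta F=\lambda\sin(\phi-\theta)$, $\partial_\phi G=\mu\sin(\phi-\theta)$ and $\cos(\phi-\theta)F-\sin(\phi-\theta)\partial_\phi F=G$ with its mirror check out), and this is a strikingly compact equivalent of the paper's $\kappa=0$. What your parametrization buys is that the curvatures $\lambda,\mu$ enter as isolated nonvanishing factors and the mixed derivatives of $\log F,\log G$ collapse to single rational terms; what the paper's graph-and-diagonal approach buys is an immediate reduction to polynomials in the jets of two functions of a \emph{single} variable.

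The gap in your write-up is that the decisive step---the actual elimination showing the identity has no admissible solutions---is only announced, not performed. You describe expanding at $(\theta_0,\phi_0)$ and running a Gr\"obner/resultant computation, but give no intermediate relations, no branch analysis, and no termination certificate; as written this is a plan, not a proof. The paper earns its conclusion by reporting the concrete polynomial consequences and the explicit contradictions $\rho_1^{12}=0$ and $\rho_1^{8}=0$ in the two branches. To complete your argument you would need to do the analogous work: either expand your identity and a few of its derivatives at the base point and exhibit the resulting inconsistency explicitly, or---probably easier---mimic the paper's diagonal trick by restricting to $\phi=\theta+c$ for the fixed angle $c=\phi_0-\theta_0$ (so that $\sin(\phi-\theta)$ becomes a nonzero constant) and eliminate in the one-variable jets of $h_L,h_K$ at $\theta_0$.
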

\begin{proof}
  \newcommand{\gl}{\gamma_L}
  \newcommand{\gk}{\gamma_K}
  Fix a parametrization of $L,K$ by parametrized curve-germs
  $\gl(s)=(x_1(s),y_1(s))$ and $\gk(t)=(x_2(t),y_2(t))$ at $0$. Recall that, by
  Lemma $\ref{thm:s2w-lagr-tangents-coords}$, the map $(s,t)\mapsto
  p(s,t)=\big(x(s,t),y(s,t)\big)$ for $\smset{p}=T_{(x_1,y_1)}L\cap
  T_{(x_2,y_2)}K$ is a valid local coordinate system satisfying $T\fol_L=\ker
  ds$ and $T\fol_K=\ker dt$, hence we can apply
  $(\ref{eq:s2w-lagr-conn-coords-curv})$ to the expression
  $(\ref{eq:s2w-lagr-tangents-2d:vol})$ for the symplectic form $\omega_0$ to
  compute the curvature of the bi-Lagrangian connection $\nabla$. The only
  independent coefficient $\kappa$ of the curvature $2$-form $\Omega$ is given
  by
  \begin{equation}
    \begin{aligned}
      \kappa &= \smfrac{
        \det\big(\gl'(s),\gk'(t)\big)
        \det\big(\gl''(s),\gl(s)-\gk(t)\big)
      }{\det\big(\gl'(s),\gl(s)-\gk(t)\big)^2}
      -
      \smfrac{
        \det\big(\gl''(s),\gk'(t)\big)
      }{\det\big(\gl'(s),\gl(s)-\gk(t)\big)} \\
      &{}+
      \smfrac{
        \det\big(\gl'(s),\gk'(t)\big)
        \det\big(\gk''(t),\gl(s)-\gk(t)\big)
      }{\det\big(\gk'(t),\gl(s)-\gk(t)\big)^2}
      -
      \smfrac{
        \det\big(\gl'(s),\gk''(t)\big)
      }{\det\big(\gk'(t),\gl(s)-\gk(t)\big)} \\
      &{}+
      3\smfrac{
        \det\big(\gl''(s),\gk'(t)\big)
        \det\big(\gl'(s),\gk''(t)\big)
      }{\det\big(\gl'(s),\gk'(t)\big)^2}
      -
      3\smfrac{
        \det\big(\gl''(s),\gk''(t)\big)
      }{\det\big(\gl'(s),\gk'(t)\big)}.
    \end{aligned}
  \end{equation}

  From now on, we will proceed as in the proof of Theorems
  $\ref{thm:s2w-lagr-rays-main}$ and $\ref{thm:s2w-lagr-rays-main-n3}$. Assume
  to the contrary that $\kappa=0$ everywhere. First, use rigid motions
  $R\in\Rb{2}\rtimes \mathrm{SO}(2)$ to simplify the problem by choosing an
  orthogonal coordinate system $(x,y)$ in which $p_1=(0,0)$ and $p_2-p_1=(0,a)$ for
  some $a\in\Rb{1}$. Assumption $(\ref{thm:s2w-lagr-tangents-2d:graphs})$ means
  that the curves $L,K$ can be expressed as the images of
  $\gamma_L(s)=(s,f(s))$ and $\gamma_K(t)=(t,g(t))$ for some smooth
  function-germs $f,g\in C^\infty(\Rb{1},0)$. Introduce the following notation:
  for each $j,k\in\mathbb{N}$ put
  \begin{equation}
    \tilde{c}_{jk}(s)=\frac{\partial^{j+k}\kappa}{\partial s^j\partial
    t^k}_{|(s,s)}
  \end{equation}
  and
  \begin{equation}
    \begin{aligned}
      \sigma(s)&=f(s)+g(s),&\qquad
      \sigma_j(s) &= \smfrac{\partial^j\sigma}{\partial s^j}(s),\\
      \rho(s)&=f(s)-g(s),&\qquad
      \rho_k(s) &= \smfrac{\partial^k\rho}{\partial s^k}(s).
    \end{aligned}
  \end{equation}
  as in Theorem $\ref{thm:s2w-lagr-rays-main}$. The assumptions
  $(\ref{thm:s2w-lagr-tangents-2d:fols})$,
  $(\ref{thm:s2w-lagr-tangents-2d:graphs})$ and
  $(\ref{thm:s2w-lagr-tangents-2d:non-sing})$ for $s=t$ correspond to $\rho\neq
  0$, $\rho_1\neq 0$ and $\rho_2^2\neq\sigma_2^2$ respectively. Now, $\kappa=0$
  implies $\tilde{c}_{jk}=0$ for each $j,k\in\mathbb{N}$. Since the
  denominators of all $\tilde{c}_{jk}$ are products of $\rho$, $\rho_1$ and
  $\rho_2^2-\sigma_2^2$ by $(\ref{eq:s2w-lagr-tangents-2d:vol})$, we can put
  $\tilde{c}_{jk}$ in their common denominator forms and consider only the
  equalities given by setting their numerators $c_{jk}$ zero. Some of the first
  few equalities of this kind are
  \begin{equation}
    \begin{aligned}
      0 &= c_{00} = 4\rho_1^2\rho_2 + 3\rho(\rho_2^2-\sigma_2^2), \\
      0 &= c_{10} = 3 \rho^2 \rho_1 \big(\rho_2-\sigma_2\big)
      \big(\rho_3+\sigma_3\big)-3 \rho^2 \big(\rho_2-\sigma_2\big)
      \big(\rho_2+\sigma_2\big){}^2 \\
      &\hskip 4em -2 \rho_1^4 \big(\rho_2-3 \sigma_2\big)+2 \rho \rho_1^3
      \big(\rho_3+\sigma_3\big), \\
      0 &= c_{01} = 3 \rho^2 \rho_1 \big(\rho_2+\sigma_2\big)
      \big(\rho_3-\sigma_3\big)-3 \rho^2 \big(\rho_2-\sigma_2\big){}^2
      \big(\rho_2+\sigma_2\big)\\
      &\hskip 4em -2 \rho_1^4 \big(\rho_2+3 \sigma_2\big)+2
      \rho \rho_1^3 \big(\rho_3-\sigma_3\big), \\
      0 &= c_{11} = 6 \rho^3 \rho_1^2 \big(\rho_3^2-\sigma_3^2\big)-12 \rho^3 \rho_1 \big(-2
      \rho_2 \sigma_2 \sigma_3+\rho_3 \sigma_2^2+\rho_2^2\rho_3\big)\\
      & \hskip 4em +9 \rho^3
      \big(\rho_2^2-\sigma_2^2\big){}^2-4 \rho \rho_1^4 \big(\rho_2^2+3
      \sigma_2^2\big) -16 \rho_2 \rho_1^6-16 \rho \rho_1^5 \rho_3.
    \end{aligned}
  \end{equation}

  Before proceeding, we offer two remarks about the above expressions. First,
  note that $c_{00}=0$ implies that $\rho_2$ does not vanish. If this were the
  case, we would have $3\rho\sigma_2^2 = 0$, hence $\sigma_2=0=\rho_2$ for some
  $s$ near $0$, which contradicts assumption
  $(\ref{thm:s2w-lagr-tangents-2d:non-sing})$. For the second remark, a~quick
  glance at $c_{jk}$ reveals that all of these expressions are polynomials in
  variables $\rho_i$ and $\sigma_i$ for $i=0,1,2,\ldots,\max(j,k)+2$.

  We now proceed to an elementary argument that the equalities $c_{jk}=0$ for
  $j,k=0,1,2$ lead to a contradiction. It is very difficult to perform the
  necessary calculations by hand, hence it is well-advised to verify our
  reasoning using a computer algebra system.

  By eliminating $\sigma_2,\sigma_3,\rho_3$ from the system of
  equalities $c_{00}=0$, $c_{01}-c_{10}=0$, $c_{11}=0$ and $\basis{s}c_{00}=0$
  under the assumptions $\rho_i\neq 0$ for $i=0,1,2$ and
  $\rho_2^2-\sigma_2^2\neq 0$, we arrive at the equality
  \begin{equation}
    (3\rho_1^2+2\rho\rho_2)(4\rho_1^2+3\rho\rho_2)=0,
  \end{equation}
  which holds everywhere. Hence, for each parameter $s$ sufficiently close to
  $0$, either $(1)$ $3\rho_1^2+2\rho\rho_2=0$, or $(2)$
  $4\rho_1^2+3\rho\rho_2$. If case $(1)$ holds for some fixed $s_0\in\Rb{1}$,
  eliminate $\rho_2(s_0),\rho_3(s_0),\rho_4(s_0),$
  $\sigma_2(s_0),\sigma_3(s_0), \sigma_4(s_0)$ from the following system of 8
  equalities
  \begin{equation}
    \left\{\hskip 1ex \begin{aligned}
      c_{02}(s_0)+c_{20}(s_0)&=0, & c_{00}(s_0) &= 0, & (\basis{s}c_{00})(s_0) &= 0, \\
      c_{02}(s_0)-c_{20}(s_0)&=0, & c_{11}(s_0) &= 0, & c_{01}(s_0)-c_{10}(s_0) &= 0, \\
      c_{21}(s_0)+c_{12}(s_0)&=0, & && c_{21}(s_0) - c_{12}(s_0) &= 0,
    \end{aligned}\right.
  \end{equation}
  to reach $\rho_1(s_0)^{12} = 0$; a contradiction. In case $(2)$, a similar
  variable elimination leads to $\rho_1(s_0)^8=0$, which is also a
  contradiction. Both of these together show that $\kappa=0$ is impossible.
  This concludes the proof.
\end{proof}

  \printbibliography

\end{document}